\numberwithin{equation}{section}
\theoremstyle{plain}
\newtheorem{theo}[equation]{Theorem}
\newtheorem*{claim*}{Claim}
\newtheorem{lem}[equation]{Lemma}
\newtheorem{prop}[equation]{Proposition}
\newtheorem{coro}[equation]{Corollary}
\newtheorem{conjecture}[equation]{Conjecture}
\theoremstyle{remark}
\newtheorem{rema}[equation]{Remark}
\theoremstyle{definition}
\newtheorem{definition}[equation]{Definition}
\def\Sym{\operatorname{Sym}}
\newcommand{\bP}{\mathbb{P}}
\newcommand{\bR}{\mathbb{R}}
\newcommand{\bQ}{\mathbb{Q}}
\newcommand{\bC}{\mathbb{C}}
\newcommand{\calC}{\mathcal{C}}
\newcommand{\calX}{\mathcal{X}}
\newcommand{\Gr}{\mathrm{Gr}}
\newcommand{\wt}{\widetilde}
\newcommand{\mc}{\mathcal}
\DeclareMathOperator{\Def}{Def}
\DeclareMathOperator{\Sk}{Sk}
\DeclareMathOperator{\SU}{SU}
\DeclareMathOperator{\spe}{sp}
\DeclareMathOperator{\Sp}{Sp}
\DeclareMathOperator{\nilp}{nilp}
\newcommand{\simq}[0]{\sim_{\bQ}}
\newcommand{\map}[0]{\dasharrow}
\newcommand{\be}{\begin{equation}}
\newcommand{\ee}{\end{equation}}
\begin{document}

 \title[Degenerations of hyper-K\"ahlers]{Remarks on degenerations of hyper-K\"ahler manifolds}

 \author[J. Koll\'ar]{J\'anos Koll\'ar}
 \address{Princeton University,  Princeton, NJ 08544, USA}
\email{kollar@math.princeton.edu}

 \author[R. Laza]{Radu Laza}
\address{Stony Brook University,  Stony Brook, NY 11794, USA}
\email{radu.laza@stonybrook.edu}

 \author[G. Sacc\`a]{Giulia Sacc\`a}
\address{Stony Brook University,  Stony Brook, NY 11794, USA}
\email{giulia.sacca@stonybrook.edu}

\author[C. Voisin]{Claire Voisin}
\address{Coll\`ege de France, 3 rue d'Ulm, 75005 Paris, France}
\email{claire.voisin@imj-prg.fr}

\begin{abstract}
Using the Minimal Model Program, any degeneration of $K$-trivial varieties can be arranged to be in a Kulikov type form, i.e.\  with trivial relative canonical divisor and mild singularities.  In the hyper-K\"ahler setting, we can then deduce a finiteness   statement
for monodromy acting on $H^2$, once one knows that  one component of the central fiber is not uniruled.  Independently of this, using deep results from the geometry of hyper-K\"ahler manifolds, we prove that a finite monodromy projective degeneration of hyper-K\"ahler manifolds has a smooth filling (after base change and birational modifications). As a consequence of these two results, we  prove a generalization of Huybrechts'
theorem about birational versus deformation equivalence, allowing singular
central fibers. As an application, we give simple proofs for the deformation type of certain geometric constructions of hyper-K\"ahler manifolds (e.g. Debarre--Voisin \cite{debarrevoisin} or Laza--Sacc\`a--Voisin \cite{lsv}). In a slightly different direction, we establish some basic properties (dimension and rational homology type) for the dual complex of a Kulikov type degeneration of hyper-K\"ahler manifolds.
\end{abstract}
\date{\today}
\maketitle

\bibliographystyle{amsalpha}

\section*{Introduction}
The starting point of this note was the study of deformation types of hyper-K\"ahler manifolds. By hyper-K\"ahler manifold we will mean a hyper-K\"ahler manifold which is also compact and irreducible.
Recall the  following fundamental result due to Huybrechts:
\begin{theo} \label{theohuy} (Huybrechts \cite{huybrechts}) Let $X$ and $X'$ be two birationally equivalent projective
hyper-K\"ahler manifolds. Then $X$ and $X'$ are deformation equivalent. (More precisely, $X$ and $X'$ have  arbitrarily small deformations that are  isomorphic to each other.)
\end{theo}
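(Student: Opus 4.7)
The plan is to reduce the problem to comparing periods via the local Torelli theorem, and then to exhibit a small deformation on which the birational map lifts to a biholomorphism.

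The first step is to extract a Hodge isometry from $f\colon Z\dashrightarrow Z'$. Because the holomorphic symplectic form on $Z'$ extends to a nowhere degenerate form on $Z$, the birational map $f$ cannot contract any divisor, nor can its inverse; hence $f$ is an isomorphism outside a locus of codimension at least $2$ on each side, and induces a Hodge isometry
\[
f_*\colon (H^2(Z,\bZ),q_Z)\xrightarrow{\sim}(H^2(Z',\bZ),q_{Z'})
\]
for the Beauville--Bogomolov forms, carrying the positive cone of $Z$ onto the positive cone of $Z'$.

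The second step uses the Kuranishi families $\calZ\to\Def(Z)$ and $\calZ'\to\Def(Z')$: both bases are smooth of the same dimension, and the local Torelli theorem says that each period map is a local isomorphism onto a neighborhood of the base point in the period domain $\Omega$. The Hodge isometry $f_*$ identifies these two neighborhoods, so for each sufficiently small parameter $t$ I get a matched pair of small deformations $Z_t$ of $Z$ and $Z'_t$ of $Z'$ whose periods are carried to each other by $f_*$.

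The third and hardest step is to promote this matching of periods to an actual biholomorphism $Z_t\xrightarrow{\sim} Z'_t$ for $t$ arbitrarily close to $0$. I would restrict to $t$ chosen outside the countable union of Noether--Lefschetz loci in the base, so that $\Pic(Z_t)=\Pic(Z'_t)=0$ (these deformations are no longer projective) and both K\"ahler cones coincide with a full connected component of the positive cone. Since $f_*$ is an orientation-preserving Hodge isometry exchanging positive cones, it must then send the K\"ahler cone of $Z_t$ to that of $Z'_t$; a Torelli-type statement---any such Hodge isometry carrying a K\"ahler class to a K\"ahler class is induced by a unique biholomorphism---then produces the sought isomorphism $Z_t\simeq Z'_t$. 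Because such very general $t$ accumulate at $0$, the manifolds $Z$ and $Z'$ admit arbitrarily small deformations that are isomorphic, giving the stronger form of the theorem (and in particular deformation equivalence). This last Torelli-like step is the essential difficulty; Huybrechts' original argument circumvents any appeal to a global Torelli theorem by constructing the biholomorphism directly from an ample class on $Z'$ which becomes a K\"ahler class on a small deformation of $Z$ along a direction read off from $f_*$.
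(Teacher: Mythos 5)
The paper does not prove this statement: it is quoted from Huybrechts \cite{huybrechts} and used as a black box (e.g.\ in the proofs of Theorem \ref{theo1} and Corollary \ref{corofinitemonoevrywhere}), so your proposal can only be measured against Huybrechts' own argument. There it has a genuine gap exactly where you flag one. The ``Torelli-type statement'' invoked in your third step --- that a Hodge isometry $H^2(Z_t,\bZ)\to H^2(Z'_t,\bZ)$ carrying a K\"ahler class to a K\"ahler class is induced by a biholomorphism --- is false for an arbitrary Hodge isometry. The correct statement (Verbitsky, in Markman's formulation) applies only to \emph{parallel transport operators}: it presupposes that the two marked manifolds lie in the same connected component of the marked moduli space and that the isometry is realized by parallel transport along a path in that component. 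The first presupposition is precisely the deformation equivalence you are trying to prove, so the argument is circular; and even setting that aside, the monodromy group of a hyper-K\"ahler deformation type is in general a proper subgroup of the full isometry group of $(H^2,q)$, so cone-preserving Hodge isometries need not be induced by isomorphisms. (Huybrechts' theorem predates Verbitsky's Torelli theorem by a decade, which is a further hint that no such appeal can be the engine of the proof.)

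Your first two steps are essentially correct (the codimension-two statement is more robustly obtained from $K$-triviality, as in Lemma \ref{lemflops}, than from the symplectic form, but the conclusion and the resulting Hodge isometry are standard). To close the gap one must argue as Huybrechts does, without global Torelli: either take an ample class $\alpha'$ on $Z'$, observe $q(f^*\alpha')>0$, and deform $Z$ in the direction prescribed by $f_*$ and local Torelli so that $f^*\alpha'$ stays of type $(1,1)$ and becomes K\"ahler on the generic nearby fiber, whence the birational map extends to an isomorphism there; or deform the cycle $\overline{\Gamma}_f+\sum Y_i\subset Z\times Z'$ sideways in the fiber product of the two Kuranishi families and show that for generic $t$ the deformed cycle is the graph of an isomorphism. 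The latter mechanism --- relative Douady spaces, properness, and a Baire category argument --- is exactly what the present paper reuses in its proofs of Theorems \ref{theo2} and \ref{theo3}, so it is worth internalizing it as the correct substitute for the Torelli step.
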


Equivalently, if $\mathcal{X}\rightarrow \Delta$ is a family of smooth hyper-K\"ahler manifolds
with central fiber $X_0$ bimeromorphic to a hyper-K\"ahler manifold
$X'_0$, then the fibers $X_t$ are deformation equivalent to $X'_0$. One of the results of our paper is a version of the last statement allowing a singular
fiber $X_0$, at least if the fibers are projective. Specifically, the following holds:
\begin{theo}\label{theo1}
Let $\mathcal{X}\rightarrow \Delta$ be a projective morphism with $X_t$ smooth
hyper-K\"ahler for $t\not=0$. Assume that at least one irreducible reduced  (that is, multiplicity $1$)
component of the central fiber
is birational to a smooth hyper-K\"ahler manifold $X'_0$.
Then the smooth fibers $X_t$ are deformation equivalent to  ${X}'_0$.
\end{theo}

 Theorem \ref{theo1} is very useful in practice, as there are many examples of degenerating families of hyper-K\"ahler manifolds with the central fiber birational to a hyper-K\"ahler manifold. In fact, as explained below, this theorem significantly simplifies arguments given in  \cite{lsv}, \cite{debarrevoisin}, and other papers (see Section \ref{secexamples}) about the deformation type of certain constructions
  of explicit projective models of hyper-K\"ahler manifolds.

Theorem \ref{theo1} is  a generalization of Huybrechts' theorem but the latter is in fact very much used in the proof.  Namely, the  proof of  Theorem \ref{theo1} follows from Huybrechts' theorem using the following new result.

\begin{theo}\label{theo2} Let $\mathcal{X}\rightarrow \Delta$ be a projective morphism with general fiber $X_t$ a smooth
hyper-K\"ahler manifold. Assume that at least one irreducible component of the central fiber $X_0$ is not uniruled.
Then after a finite base change
$S\rightarrow \Delta$, the family
$\mathcal{X}_S:=\mathcal{X}\times_{\Delta}S\rightarrow S$ is bimeromorphic
over $S$ to a family $\pi':\mathcal{X}'\rightarrow S$ that
is  smooth and proper over $S$ with projective hyper-K\"ahler fibers.
 \end{theo}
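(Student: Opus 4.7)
The plan is to combine two ingredients developed elsewhere in the paper, and bridge them via the non-uniruledness hypothesis on a single component.

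\emph{Step 1: MMP reduction to Kulikov form.} After a finite base change $S\to\Delta$, I would apply the Minimal Model Program to $\mathcal{Z}_S\to S$ to produce a bimeromorphic model $\widetilde{\mathcal{Z}}\to S$ in Kulikov form: a relative minimal model, semistable, with $K_{\widetilde{\mathcal{Z}}/S}\sim 0$ and mild (dlt) singularities, with reduced central fiber. This is the general Kulikov-type reduction for $K$-trivial degenerations alluded to in the abstract. Since the modification is an isomorphism over $S\setminus\{0\}$, and since non-uniruledness is a birational invariant (a dominant rational map from a non-uniruled variety lands in a non-uniruled one), some irreducible component of the new central fiber is still non-uniruled.

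\emph{Step 2: Finiteness of monodromy.} I would then invoke the finiteness statement for monodromy announced in the abstract: in a Kulikov-type degeneration of projective hyper-K\"ahler manifolds, if some component of the central fiber is not uniruled, then the monodromy on $H^2$ is finite. This is the hyper-K\"ahler analogue of Kulikov's trichotomy for K3 surfaces, where only Type~I degenerations have non-uniruled components. In higher dimension, the argument should combine the nilpotent orbit theorem (so that a nonzero logarithm of monodromy $N$ on $H^2$ produces a nontrivial limit mixed Hodge structure) with the existence of the holomorphic symplectic form on the smooth fibers, extending to a global section of $K_{\widetilde{\mathcal{Z}}/S}$; when $N\ne 0$, the classification of log-canonical centers in the Kulikov model together with MMP-type results on uniruledness of $K$-trivial degenerate fibers should force every component of $Z_0$ to be uniruled.

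\emph{Step 3: Smooth filling.} Once the monodromy on $H^2$ is known to be finite, I would apply the second main theorem of the paper — the filling result — which asserts that a projective degeneration of hyper-K\"ahler manifolds with finite $H^2$-monodromy admits, after a further finite base change and birational modifications, a smooth proper model with projective hyper-K\"ahler fibers, bimeromorphic over the base to the original family. This produces the desired $\pi':\mathcal{Z}'\to S$.

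The main obstacle is Step~2: the implication "one non-uniruled component $\Rightarrow$ finite monodromy on $H^2$". Steps 1 and 3 are direct applications of results stated elsewhere, and Step~1 only needs that non-uniruledness is preserved under the MMP modifications, which is essentially tautological. Step~2, by contrast, requires combining Hodge-theoretic input (nilpotent orbit theorem, weight filtration on the limit MHS) with the specific geometric structure of Kulikov degenerations of hyper-K\"ahlers (behaviour of the symplectic form, dual complex, log-canonical centres), and this is where the genuine hyper-K\"ahler content of the theorem is concentrated.
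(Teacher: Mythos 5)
Your proposal follows exactly the paper's own route: Theorem \ref{theo2} is deduced there from Theorem \ref{theoradu} (a non-uniruled component of $Z_0$ forces finite monodromy on $H^2$, proved via the Kulikov-type model of Theorem \ref{thmkulikovmodel}) combined with Theorem \ref{theo3} (finite monodromy implies a smooth filling after base change, proved via surjectivity of the period map, Verbitsky's Torelli theorem, and a relative Douady space argument), which is precisely your Steps 1--3. The one remark worth making is that the step you flag as the main obstacle is handled in the paper more elementarily than your sketch suggests: after the MMP reduces the central fiber to an irreducible variety with canonical singularities, one cuts with a general linear section of codimension $2n-2$ to obtain a family of surfaces whose central fiber has canonical singularities (hence finite monodromy on the surfaces' $H^2$), and hard Lefschetz gives a monodromy-equivariant injection of $H^2(Z_t)$ into the $H^2$ of the surface section, so no nilpotent orbit or log-canonical-center analysis is needed for this implication.
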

\begin{rema}  {\rm The assumption on the central fiber is satisfied if the desingularization of  one irreducible component $V$ of $X_0$ has a generically nondegenerate holomorphic $2$-form
and this is the main situation where we will apply the theorem. Note that with this stronger assumption,   Theorem \ref{theo2}
  was previously announced by Todorov \cite{todorov}.}
  \end{rema}

 \begin{rema} {\rm In the assumptions of Theorem \ref{theo2}, we did not ask that the considered
irreducible component $V$  be reduced. This is because after base change and normalization, we can remove multiplicities, still having a component satisfying the main assumption, but now reduced. In this process, the considered
component $V$, when it has multiplicity $>1$, is replaced by a generically finite cover of $V$, hence it is not in general birational to $V$. This is why
we need the multiplicity $1$ assumption in Theorem \ref{theo1}, whose statement actually  involves the birational model of $X_0$.}
\end{rema}

 A first important step in the proof of Theorem \ref{theo2}  is
the following result:
\begin{theo}\label{theorr} Let $\mathcal{X}\rightarrow \Delta$ be a projective morphism with general fiber $X_t$ a smooth
hyper-K\"ahler manifold.  Assume that at least one irreducible component of the central fiber $X_0$ is not uniruled.
Then, the monodromy action on the
degree $2$ cohomology of the smooth fiber $X_t$ is finite.
\end{theo}

Once one has finiteness of the monodromy acting on $H^2$, Theorem \ref{theo2} is  a consequence
of the following  variant of Theorem
\ref{theo2} whose proof  uses the surjectivity of the period map proved by Huybrechts and Verbitsky's Torelli theorem (see \cite{verbi}, and also \cite{huybrechtsbourbaki}).
\begin{theo}\label{theo3} Let $\mathcal{X}\rightarrow \Delta$ be a projective morphism with general fiber $X_t$ a smooth
hyper-K\"ahler manifold. Assume the monodromy acting on
$H^2(X_t,\mathbb{Q}) $ is finite.
Then after a finite base change
$S\rightarrow \Delta$, the family
$\mathcal{X}_S:=\mathcal{X}\times_{\Delta}S\rightarrow S$ is bimeromorphic
over $S$ to a family $\pi':\mathcal{X}'\rightarrow S$ which
is  smooth proper over $S$ with projective hyper-K\"ahler fibers.
 \end{theo}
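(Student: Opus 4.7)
The strategy is to construct $\mathcal{Z}'$ as the pullback of the (local) universal family of marked hyper-K\"ahler manifolds along an extended period map, and then identify it bimeromorphically with $\mathcal{Z}_S$ using Verbitsky's global Torelli. After a finite base change $S\to \Delta$ (which kills the finite monodromy group on $H^2(Z_t,\mathbb{Z})$ modulo torsion), choose a marking $\eta\colon R^2\pi_*\mathbb{Z}|_{S^*}\xrightarrow{\sim}\underline{\Lambda}_{S^*}$, where $S^* = S\setminus\{0\}$ and $\Lambda$ is the Beauville--Bogomolov lattice of $Z_t$. The associated period map $\mathcal{P}\colon S^* \to \Omega_\Lambda$ is then single-valued. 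Since monodromy is trivial, Schmid's nilpotent orbit theorem (which in the trivial-monodromy case is just Riemann extension across $\{0\}$, with the limit Hodge structure still pure and polarized) allows us to extend $\mathcal{P}$ holomorphically to $\mathcal{P}\colon S \to \Omega_\Lambda$. Write $p_0 = \mathcal{P}(0)$.

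\textbf{Constructing the filling.} By Huybrechts' surjectivity of the period map, $p_0$ is realized as the period of some marked hyper-K\"ahler manifold $(X_0,\eta_0)$ lying in the connected component $\mathcal{M}_\Lambda^{\circ}$ of the marked moduli space that contains the nearby points $(\mathcal{Z}_t,\eta_t)$. By the local Torelli theorem, a neighborhood of $(X_0,\eta_0)$ in $\mathcal{M}_\Lambda^{\circ}$ is identified via the period map with an open set $U\subset\Omega_\Lambda$ around $p_0$, and Kuranishi theory provides an analytically universal smooth proper family $\widetilde{\pi}\colon\widetilde{\mathcal{X}}\to U$ over this neighborhood. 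Shrinking $S$ so that $\mathcal{P}(S)\subset U$, we set $\pi'\colon\mathcal{Z}' := \mathcal{P}^{*}\widetilde{\mathcal{X}} \to S$; this is the desired smooth proper family with hyper-K\"ahler fibers.

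\textbf{Bimeromorphic identification and projectivity.} For each $t \in S^*$, the fibers $\mathcal{Z}'_t$ and $(\mathcal{Z}_S)_t$ are smooth projective hyper-K\"ahler manifolds sharing the same marked period, so Theorem~\ref{theohuy} supplies a birational equivalence between them. To assemble these pointwise equivalences into a relative bimeromorphism $\mathcal{Z}_S\dashrightarrow \mathcal{Z}'$ over $S^*$, one invokes Verbitsky's global Torelli theorem: two marked families with identical period maps correspond to the same point of the Hausdorff reduction of $\mathcal{M}_\Lambda^{\circ}$, which produces a bimeromorphic map of the total spaces. Properness of the graph and the smoothness of $\mathcal{Z}'$ then extend it across $\{0\}$ by a Hartogs-type argument. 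Finally, a relative polarization of $\mathcal{Z}_S/S$ corresponds via the marking to a constant class $\lambda\in\Lambda$ that is of type $(1,1)$ on every fiber of $\mathcal{Z}'$ (including $\mathcal{Z}'_0$); using the description of the K\"ahler cone of hyper-K\"ahler manifolds (Huybrechts, Boucksom), one may replace $\mathcal{Z}'$ by a further birational model over $S$ on which $\lambda$ becomes relatively ample, yielding the required projective smooth filling.

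\textbf{Main obstacle.} The delicate part is not the local construction of the filling in Step~2 but the global consistency in Step~3: upgrading the fiberwise birational equivalences into a relative bimeromorphism of total spaces, and simultaneously ensuring that the polarization class $\lambda$ can be made ample uniformly on the central fiber. Both points depend essentially on deep input from Verbitsky's Torelli theorem (to cope with the non-Hausdorff structure of $\mathcal{M}_\Lambda$) and from the precise structure of the K\"ahler cone of a hyper-K\"ahler manifold; without these, one would only obtain a smooth filling up to fiberwise birational equivalence, falling short of what the statement requires.
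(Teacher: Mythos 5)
Your overall strategy coincides with the paper's: trivialize the monodromy by base change, extend the period map across the puncture, use Huybrechts' surjectivity together with local Torelli to pull back a universal family, and then compare the two families via Verbitsky's Torelli theorem. The genuine gap is in your third step. Verbitsky's global Torelli theorem gives you, for each individual $t\in S^*$, that $\mathcal{Z}_t$ and $\mathcal{Z}'_t$ are inseparable points of the marked moduli space and hence birational; it does not ``produce a bimeromorphic map of the total spaces.'' The fiberwise birational maps are neither unique nor a priori organized into an analytic family, so there is nothing yet to which you could apply ``properness of the graph'' or a Hartogs-type argument. This is exactly where the paper does real work: it considers the relative Douady space of cycles in $\mathcal{Z}_S\times_S\mathcal{Z}'_S$, which by Bishop's theorem is a countable union of components proper over the base; the locus where the parametrized cycle is the graph of a birational map between fibers is Zariski open in each component; and a Baire category argument then produces a component dominating the base, whose universal cycle (after a further finite base change to make that component finite over the base) is the desired relative birational correspondence. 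You correctly flag this globalization as the main obstacle, but you do not supply the argument that overcomes it.

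Two smaller points. First, the extension of the period map across $0$ is not ``just Riemann extension'': a holomorphic map $\Delta^*\to\mathbb{P}(\Lambda\otimes\mathbb{C})$ with trivial monodromy can still have an essential singularity at the puncture. One needs Griffiths' removable singularity theorem (or Schmid's theory), which applies to \emph{polarized} period maps; this is why the paper passes to the polarized period domain $\mathcal{D}_l$ orthogonal to the class $l$ of a relative polarization, and why the projectivity of $\mathcal{Z}\to\Delta$ is used seriously at precisely this point. Second, projectivity of the fibers of $\mathcal{Z}'$ (including the central one) follows directly from Huybrechts' projectivity criterion, since each fiber carries the $(1,1)$-class $\lambda$ with $q(\lambda)>0$; your proposed further birational modification to make $\lambda$ relatively ample is unnecessary and would reintroduce the same family-assembly difficulties noted above.
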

\begin{rema}\label{remfinitemoncy} Let us emphasize that this is a result specific to hyper-K\"ahler manifolds. There are examples of  families of Calabi-Yau varieties for which the monodromy is finite, but  not admitting a smooth filling after base change. The first example is due to Friedman \cite{friedman} who noticed that  a generic degeneration to a quintic threefold with an $A_2$ singularity has finite order monodromy.  Wang \cite[\S4]{wang} then checked that there is no smooth projective filling.
Another example, this time for Calabi-Yau fourfolds, is that of a Lefschetz $1$-nodal  degeneration of a sextic hypersurface in $\mathbb{P}^5$ which is treated in \cite{voisinlefdeg}. For this example, Morgan
\cite{morgan} shows that
the monodromy is finite in the group of isotopy classes of diffeomorphisms
of the smooth fibers $X_t$, so that after finite base change, the family admits
a $\mathcal{C}^\infty$ filling. It is proved in \cite{voisinlefdeg} that
 the base-changed family doesn't
admit a filling with a smooth Moishezon fiber for any base change.
\end{rema}

Theorem \ref{theo3} tells us that under the same assumptions on $\mathcal{X}\rightarrow \Delta$,
there is, after base change, a  family $\mathcal{X}'\rightarrow \Delta$ birationally equivalent to
$\mathcal{X}$ over $\Delta$, with smooth central fiber. The monodromy
action on the whole cohomology of the fiber
$X'_t$ is thus finite. With a little more work, we will prove
in Section \ref{secmain} that the monodromy action on the whole cohomology of the original fiber
$X_t$ is also finite (see Corollary \ref{corofinitemonoevrywhere}). (Note that $X_t$ and $X'_t$ are isomorphic in codimension $1$, but they typically differ in higher codimensions.)

\medskip

Theorem \ref{theorr} rests on the application of the minimal model program (MMP)   (see Section \ref{secmmp})
 to understanding the degenerations of K-trivial varieties (such as Calabi-Yau or hyper-K\"ahler manifolds). For a long time it was understood that the MMP plays a central role in this enterprise. Namely, the model result here is the Kulikov--Persson--Pinkham (KPP) Theorem which says that a $1$-parameter degeneration of $K3$ surfaces can be arranged to be a semistable family satisfying the additional condition that the relative canonical class is trivial.  As an application of this result, one obtains  control of the monodromy for the degenerations of $K3$ surfaces in terms of the central fiber and then a properness result for the period map. In higher dimensions, the analogue of the KPP  theorem is that any $1$-parameter degeneration of K-trivial varieties can be modified such that all the fibers have mild singularities and that the relative canonical class is trivial (this is nothing but a relative minimal model). More precisely,  a  higher dimensional analogue of the KPP theorem is given by Fujino \cite{fujino-ss} and Lai \cite{lai-2009} (building on \cite{BCHM}). We state a refinement of Fujino's result
 in Theorem \ref{thmkulikovmodel}, which provides some additional control on the behavior of the central fiber under the semistable reduction (needed to achieve mild singularities), followed by the minimal model program (need to achieve $K$-triviality).

 To complete the proof of Theorem \ref{theorr}, we use the fact that the singularities occurring in the MMP are mild from a cohomological point of view. This follows by combining the results of Koll\'ar--Kov\'acs \cite{kk} and Steenbrink \cite{steenbrink}, which give a vast generalization and deeper understanding of the results of Shah \cite{shahinsignificant,shah2} on degenerations of $K3$ surfaces. These arguments apply  to degenerations of any $K$-trivial varieties, but since the cohomologically mild condition refers only to the holomorphic part of the cohomology (i.e. the $H^{k,0}$ pieces of the Hodge Structure), controlling the monodromy in terms of the central fiber is possible only for $H^1$ and $H^2$ (see Theorems \ref{theomon1} and \ref{theorr} for the case of hyper-K\"ahler fibers). Since the degree $2$ cohomology controls the geometry of hyper-K\"ahler manifolds, we obtain in Section \ref{secmain} the much stronger result (that can not follow from general MMP) that certain degenerations of hyper-K\"ahler manifolds have smooth fillings.

As explained above, the smooth filling of finite monodromy degenerations (Theorem \ref{theo3}) is a result specific to hyper-K\"ahler manifolds. The proof given in Section \ref{secmain} depends on deep properties (Torelli and surjectivity) of the period map.
In Section \ref{secsym}, we give a completely different  proof of Theorems \ref{theo2} and \ref{theo3}, which again depends on specific results in the geometry of hyper-K\"ahler manifolds. Specifically, starting with a degeneration of hyper-K\"ahler manifolds $\calX/\Delta$ with a component of the central fiber not uniruled, by applying the MMP results of Section \ref{secmmp} and ideas similar to those in Section \ref{secmono}, we conclude that the central fiber $X_0$ can be assumed to have symplectic singularities in the sense of Beauville \cite{Beauville}. The rigidity results of Namikawa \cite{namikawa1,namikawa2} then allow us to conclude that the degeneration can be modified to  give a smooth family.

 Theorem \ref{theo1}, and even its weaker version Theorem \ref{theo3}
are very useful in practice and we will devote Section
\ref{secexamples} to describing a number of geometric examples. The most important one, which was the original motivation for this paper, is the case of the intermediate Jacobian fibration  associated to a cubic fourfold.
Specifically, we recall that in \cite{lsv} we have given, starting with a cubic fourfold $W$, a construction of a 10-dimensional hyper-K\"ahler manifold $X$ compactifying the intermediate Jacobian
fibration associated to the family of smooth hyperplane sections of $W$.  We then proved, via delicate geometric arguments (\cite[Section 6]{lsv}),  that when the cubic fourfold is Pfaffian,  the so-constructed hyper-K\"ahler manifold specializes well and is birational  to an O'Grady's $10$-dimensional exceptional hyper-K\"ahler manifold (whose deformation class is referred to as OG10 in this paper). By Huybrechts' theorem \cite[Theorem 4.6]{huybrechts} (Theorem \ref{theohuy} above),
we concluded that our compactified intermediate Jacobian fibrations
are deformation equivalent to OG10. While our arguments in \cite{lsv} establish the desired result, they are somewhat convoluted and obscure, as Pfaffian geometry is beautiful but
sophisticated. As observed by O'Grady and Rapagnetta (\cite{ogradyrapagnetta}) even before we started working on \cite{lsv}, another degeneration linking  in a more direct way the intermediate Jacobian
 fibration to OG10 varieties consists in  specializing the intermediate Jacobian fibration in the case where $W$ degenerates to the secant variety of the Veronese surface in $\bP^5$ (see Section \ref{subseclsv}).  There is however a serious obstruction to realize this program: starting with a well-understood or mild degeneration of cubic fourfolds $\mathcal W/\Delta$, the corresponding degeneration of the associated family
 of hyper-K\"ahler manifolds $\calX/\Delta$  can be  quite singular, and a priori hard to control. This is a common occurrence that can be already observed on the family
 of Fano varieties of lines of cubic fourfolds when the cubic acquires a node: mild degenerations of the cubic fourfold lead to families of associated hyper-K\"ahler manifolds $\calX/\Delta$ where  both the central fiber $X_0$ and the family $\calX$ are quite singular. So even if $X_0$ is  birational to a known hyper-K\"ahler manifold, due to the singularities, it is a priori  difficult to conclude that the general fiber $X_t$ is deformation
  equivalent to the given type. In \cite{lsv}, we avoided this issue following
   Beauville and Donagi \cite{bedo} by specializing to general Pfaffian cubics (for which
  our construction  has smooth specialization), while in \cite{debarrevoisin}, where another similar example was studied, an explicit resolution of the associated degeneration of hyper-K\"ahlers $\calX/\Delta$ was found. Theorem \ref{theo1} gives a uniform and simplified treatment of all these examples.

\begin{rema}{\rm  For the geometric applications we consider in this paper, checking finiteness of monodromy (see Theorem \ref{theo3}) is quite easy and
can be done directly, as we will explain case by case for completeness. This is due to the fact that we are considering (a family of) badly degenerating hyper-K\"ahler manifolds associated to (a family of) mildly degenerating Fano hypersurfaces, for which the finiteness of monodromy is clear.}
\end{rema}

In the final section, Section \ref{secdual}, we make some remarks on the degenerations of hyper-K\"ahler manifolds with infinite monodromy. Let's start by recalling the notion of {\it Type} for a degeneration.
\begin{definition}\label{defType}
Let $\calX^*/\Delta^*$ be a projective degeneration of hyper-K\"ahler manifolds (including the $K3$ case). Let $\nu\in\{1,2,3\}$ be the nilpotency index for the associated monodromy operator $N$ on $H^2(X_t)$ (i.e. $N=\log T_u$, where $T_u$  is the unipotent part of the monodromy $T=T_sT_u$). We say that the degeneration is of Type I, II, or III respectively if $\nu=1,2,3$ respectively.
\end{definition}

 For degenerations of $K3$ surfaces, a well known result (Theorem \ref{theokulikov2}) gives a precise classification of the central fiber of the degeneration depending on Type. Our results (esp. Theorem \ref{theo3}) give a strong generalization of the Type I case of this classification (i.e. finite monodromy implies the existence of smooth fillings). For the remaining Type II and III cases, we have weaker results, but which we believe to be of certain independent interest. Specifically, our focus is on the topology of the dual complex, a natural combinatorial gadget associated to semistable degenerations (or more generally {\it dlt degenerations}, by which we understand $(\calX,X_t)$ is dlt for every $t\in\Delta$, where dlt (divisorial log terminal) is as in \cite[Def. 2.7]{Kollar-Mori}; see also Appendix, esp. Def. \ref{defdlt}, for a brief review).
\begin{theo}\label{theodualcx}
Let $\calX/\Delta$ be a minimal dlt degeneration of $2n$-dimensional hyper-K\"ahler manifolds. Let $\Sigma$ denote the dual complex of the central fiber (and $|\Sigma|$ its topological realization). Then
\begin{itemize}
\item[(i)] $\dim |\Sigma|$ is $0$, $n$, or $2n$ iff the Type of the degeneration is I, II, or III respectively (i.e. $\dim |\Sigma|=(\nu-1)n$, where $\nu\in\{1,2,3\}$ is the nilpotency index of the log monodromy $N$).
\item[(ii)] If the degeneration is  of Type III, then $|\Sigma|$ is a simply connected closed pseudo-manifold, which is a rational homology $\bC\bP^n$.
\end{itemize}
\end{theo}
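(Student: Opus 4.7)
The plan is to combine two ingredients: the identification of the dual complex of a minimal dlt (or snc) model with the weight-zero piece of the limit mixed Hodge structure, and the Looijenga--Lunts--Verbitsky (LLV) structure on $H^*(Z_t,\bQ)$, which propagates monodromy data from $H^2$ to all of cohomology.

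First, using the results of Section~\ref{secmmp} I would replace $\calZ/\Delta$ by a semistable snc model with $K_\calZ\sim_\Delta 0$ (via further log resolution); the dual complex is a PL-invariant of the birational class of such models (de Fernex--Koll\'ar--Xu), so this operation does not change $|\Sigma|$ up to PL-homeomorphism. For an snc Kulikov-type model, Steenbrink's weight spectral sequence furnishes a canonical isomorphism
\[
H^k(|\Sigma|,\bQ) \;\cong\; \operatorname{Gr}^W_0 H^k_{\lim}(Z_t,\bQ),
\]
so $\dim|\Sigma|$ is the largest $k$ with $\operatorname{Gr}^W_0 H^k_{\lim}\neq 0$. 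To compute this I would invoke LLV: $H^*(Z_t,\bQ)$ is an irreducible module over an $\mathfrak{so}$-Lie algebra generated by cup-products with $H^2$, and (by Soldatenkov) the monodromy logarithm $N$ lies in this Lie algebra. Consequently the weight filtration on $H^*_{\lim}$ is completely determined by that on $H^2_{\lim}$, which has $\nu$ non-trivial graded pieces symmetric about weight $2$. A representation-theoretic count inside the irreducible $\mathfrak{so}$-module then shows that $\operatorname{Gr}^W_0 H^{\bullet}_{\lim}$ vanishes in odd degrees and is non-zero in even degrees precisely up to $2(\nu-1)n$, with a top class built as $q^{(\nu-1)n}\cup\sigma^{?}$ where $q=N^{\nu-1}\sigma\in\operatorname{Gr}^W_0 H^2$ (with an analogous BBF-isotropic replacement in the $\nu=2$ case). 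This establishes part~(i).

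For part~(ii), assume the degeneration is of Type~III, so $\dim|\Sigma|=2n=\dim Z_0$. The pseudo-manifold property is standard for maximal-dimensional dual complexes of dlt models (the top faces of $|\Sigma|$ correspond to $0$-strata of $Z_0$, meeting along codimension-one faces according to the local analytic picture), and closedness follows from the triviality of $K_\calZ$ via Koll\'ar's analysis of log Calabi--Yau dual complexes. Simple connectedness then follows from the theorem of Koll\'ar (and Takayama) that the dual complex of a dlt log Calabi--Yau minimal model is simply connected. Finally, the identification above gives $H^*(|\Sigma|,\bQ)\cong\bigoplus_k\operatorname{Gr}^W_0 H^{2k}_{\lim}$; the LLV/BBF picture, with $q\in\operatorname{Gr}^W_0 H^2$ primitive and $q^n\neq 0$, shows each $\operatorname{Gr}^W_0 H^{2k}_{\lim}$ is one-dimensional for $0\leq k\leq n$, yielding $H^*(|\Sigma|,\bQ)\cong\bQ[q]/(q^{n+1})\cong H^*(\bC\bP^n,\bQ)$.

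The main obstacle I anticipate is the LLV-based weight computation: transferring weight data from $H^2_{\lim}$ to $H^*_{\lim}$ requires Soldatenkov's result that $N$ acts through the LLV algebra, and the detailed bookkeeping inside the irreducible $\mathfrak{so}$-representation must be done carefully to identify exactly which bidegrees contribute a weight-zero piece, in particular to exclude stray contributions in degrees $>2(\nu-1)n$. A secondary technical point is ensuring that the isomorphism $H^k(|\Sigma|)\cong\operatorname{Gr}^W_0 H^k_{\lim}$ is preserved when passing between snc and dlt models, and that the dlt setting does not introduce extra boundary into $|\Sigma|$ that would invalidate the closed pseudo-manifold statement in~(ii).
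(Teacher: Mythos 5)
Your Type III analysis is essentially the paper's: the identification $H^k(|\Sigma|,\bQ)\cong \Gr^W_0H^k_{\lim}$ (via slc $\Rightarrow$ du Bois $\Rightarrow$ cohomologically insignificant, plus Mayer--Vietoris), Verbitsky's injection $\Sym^kH^2\hookrightarrow H^{2k}$ to show the weight-zero piece is one-dimensional in each even degree up to $2n$ and zero otherwise, compatibility with cup product, and the citations to Nicaise--Xu and Koll\'ar--Xu for the pseudo-manifold and simple connectedness claims. The paper does not need the full LLV algebra or the fact that $N$ lies in it; it only uses that the complement $H^{2k}/\Sym^kH^2$ underlies a polarized VHS of level $\le 2k-2$, hence cannot contribute in weight $0$. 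Note that $H^*(Z_t,\bQ)$ is \emph{not} irreducible as an LLV module, so your ``irreducible $\mathfrak{so}$-module'' bookkeeping would in any case have to be replaced by exactly this level bound on the complementary summands.

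The genuine gap is in part (i), for Types I and II: you compute $\dim|\Sigma|$ as the largest $k$ with $\Gr^W_0H^k_{\lim}\neq 0$, i.e.\ with $H^k(|\Sigma|,\bQ)\neq 0$, but the dimension of $|\Sigma|$ is not detected by its rational cohomology. Concretely, in Type II the weight filtration on $H^2_{\lim}$ has graded pieces only in weights $1,2,3$, so $\Gr^W_0H^2_{\lim}=0$ and hence $\Gr^W_0H^{2k}_{\lim}=0$ for all $k\ge1$; your criterion therefore outputs $\dim|\Sigma|=0$ in both the Type I and Type II cases, whereas the theorem asserts $\dim|\Sigma|=n$ in Type II (for Type II degenerations of Hilbert schemes of $K3$s, $|\Sigma|$ is an $n$-simplex: positive dimension, trivial rational cohomology). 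Your claim ``non-zero in even degrees precisely up to $2(\nu-1)n$'' is thus false for $\nu=2$. The paper's Type II argument is of a different nature: it works with the Hodge filtration rather than the weight filtration, using the Mayer--Vietoris spectral sequence for $\mathcal O_{X_0}$ together with its multiplicative structure to show that the generator of $H^2(\mathcal O_{X_0})$ comes from a class $\eta\in H^1(\mathcal O_{X_0^{[1]}})$ with $\eta^n\neq0$ in $H^n(\mathcal O_{X_0^{[n]}})$, which forces the existence of codimension-$n$ strata; the upper bound uses that the minimal strata are $K$-trivial, so a deepest stratum of codimension $k$ would produce a nonzero class in $I^{2n-k,0}(H^{2n}_{\lim})$, impossible for $k\neq n$ in Type II. Likewise, Type I requires the filling result (Theorems \ref{theo3} and \ref{theomon1}) to conclude that $X_0$ is irreducible, not merely that $H^{>0}(|\Sigma|,\bQ)=0$. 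A secondary but real issue: passing to an snc model changes $|\Sigma|$ only up to homotopy equivalence (a retraction), not PL-homeomorphism, so both the dimension count in (i) and the closed pseudo-manifold statement in (ii) must be argued on the minimal dlt model itself.
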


A few comments are in order here. First, this is clearly a (weak) generalization of Kulikov's theorem which states that for $K3$ surfaces,  $|\Sigma|$ is either a point, an interval, or $S^2$ depending on the Type of the degeneration.
Secondly, we note that under the assumption of minimal dlt degeneration, the dual complex is a well defined topological space (cf. \cite{dkx}, \cite{mn}, and \cite{nx}). There is a significant interest in the study of the dual complex $|\Sigma|$ in connection with the SYZ conjecture in mirror symmetry, especially in the context of the work of Kontsevich--Soibelman \cite{ks2,ks}.  In the strict Calabi-Yau case, it is expected that for maximal unipotent (MUM) degenerations $|\Sigma|$ is homeomorphic to the sphere $S^n$ (in any case, it is always a simply connected rational homology $S^n$).  The case $n=2$ follows from Kulikov's Theorem, and the cases $n=3$ and (assuming simple normal crossings) $n=4$ were confirmed recently by Koll\'ar--Xu \cite{kx}. Theorem \ref{theodualcx} follows by arguments similar to the Calabi-Yau case (esp. \cite{kx} and \cite{nx}) and a result of Verbitsky \cite{verbitskycoh}, which identifies the cohomology subalgebra generated by $H^2$ for a hyper-K\"ahler manifold. We also note that the occurrence of $\bC\bP^n$ in Theorem \ref{theodualcx} (see Theorem \ref{theostructureskeleton}
for a more general statement) is in line with the predictions of mirror symmetry. Namely, in the case of hyper-K\"ahler manifolds, the base of the Lagrangian fibration occurring in SYZ can be (conjecturally) identified with the base of an algebraic Lagrangian fibration, and thus expected to be $\bC\bP^n$ (see \S\ref{subssyz}--\ref{subshwang} for a discussion).

We close by noting that in passing (in our study of dual complexes for hyper-K\"ahler degenerations) we partially confirm a Conjecture of Nagai \cite{nagai} on the monodromy action on higher cohomology groups of hyper-K\"ahler manifolds (see Theorem \ref{theonagai} for a precise statement). Nagai has previously verified the conjecture for degenerations coming from Hilbert schemes of $K3$ surfaces or generalized Kummer varieties.

\subsection*{Acknowledgements} This project has started as a follow-up to \cite{lsv}. While preparing this manuscript, we have benefited from discussions with several people. We are particularly grateful to V. Alexeev, R. Friedman, K. Hulek, D. Huybrechts, J. Nicaise, and K. O'Grady for some specific comments related to parts of our paper.

The first author (JK) was partially supported by NSF grant DMS-1362960. He also acknowledges the support of Coll\`ege de France and FSMP that facilitated the meeting of the other co-authors and the work on this project. The second author (RL) was partially supported by NSF grants DMS-1254812 and DMS-1361143. He is also grateful to ENS and Olivier Debarre for hosting him during his sabbatical year, and to Simons Foundation and FSMP for fellowship support. The third author (GS) also would like to thank Coll\`ege de France for hospitality.

%%%%%%%%%%%%%%%%%%%%%%%%%%%%%%%%%%%%%%%
%%% S1 - Minimal Model
\section{Relative minimal models for degenerations of $K$-trivial varieties (aka Kulikov models)}\label{secmmp}

The Kulikov-Persson-Pinkham Theorem (\cite{kulikov}, \cite{pp}) states that any degeneration of $K3$ surfaces can be modified (after base change and birational transformations) to be semistable with trivial canonical bundle. In higher dimensions, the Minimal Model Program (MMP) guarantees for degenerations of $K$-trivial varieties the existence of a {\it minimal dlt model} $\calX/\Delta$ (i.e. $K_{\calX}\equiv 0$, and $(\calX,X_t)$ is dlt for any $t\in \Delta$; see Definition \ref{defdlt} for dlt).  The statement needed in this paper is due to Fujino \cite[Theorem I]{fujino-ss}. The following is a version of Fujino's theorem with a focus on the relationship between the central fiber of the original degeneration and the central fiber in the resulting minimal dlt model (in particular we note that any non-uniruled component will survive in the resulting minimal dlt model):
\begin{theo}\label{thmkulikovmodel} Let $f:X\to C$ be a projective morphism to a smooth, projective curve $C$. Assume that
\begin{enumerate}
\item[(i)] the generic  fiber $X_g$ is irreducible and birational to a $K$-trivial variety  with canonical singularities and
\item[(ii)] every fiber $X_c$ has at least one irreducible component $X_c^*$ that is not uniruled.
\end{enumerate}
Then there is a finite, possibly ramified, cover $\pi:B\to C$ and a projective morphism   $f':Y\to B$
with the following properties:
\begin{enumerate}\setcounter{enumi}{2}
\item[(0)] $Y$ is birational to $B\times_CX$ (and the birational map commutes with the projections to $B$),
\item[(1)]  the generic    fiber $Y_g$ is a $K$-trivial variety with terminal singularities,
\item[(2)] every fiber $Y_b$ is a $K$-trivial variety with canonical singularities, and
\item[(3)] if $X_c^*$ has multiplicity 1 in $X_c$ then $Y_b$ is birational to
 $X_c^*$ for $b\in \pi^{-1}(c)$.
\end{enumerate}
\end{theo}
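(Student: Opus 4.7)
The plan is to combine semistable reduction with the relative minimal model program, tracking a specific non-uniruled component through the entire process. First, I would apply the Kempf--Knudsen--Mumford--Saint-Donat semistable reduction theorem to obtain a finite ramified cover $\pi: B \to C$ and a birational modification $\tilde{X} \to B \times_C X$ such that $\tilde{X} \to B$ is semistable, i.e.\ $\tilde{X}$ is smooth and every fiber is a reduced simple normal crossing divisor. A key remark at this stage is that if $X_c^*$ has multiplicity $1$ in $X_c$, then at the generic point of $X_c^*$ no ramification is needed for the base change, so the strict transform $\tilde{X}_c^*$ of $X_c^*$ in $\tilde{X}$ is birational to $X_c^*$ itself, rather than a multi-sheeted cover of it.

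Next, I would apply Fujino's version \cite{fujino-ss} of the relative MMP (building on \cite{BCHM}) to the semistable family $\tilde{X}/B$. Since the generic fiber is birational to a $K$-trivial variety and hence has non-negative Kodaira dimension, this MMP terminates with a relative minimal dlt model $f_B: Y \to B$ satisfying $K_Y \equiv_B 0$. Because the generic fiber is $K$-trivial with canonical singularities, abundance in this setting upgrades this to $K_Y \sim_\bQ 0$ over $B$, so $K_{Y_b}\sim_\bQ 0$ for every $b\in B$. Standard properties of dlt models with $K_Y\equiv_B 0$ then yield conclusions (1) and (2): the generic fiber $Y_g$ has terminal singularities (inherited from the terminal total space) and is $K$-trivial, while each special fiber $Y_b$ has canonical singularities by adjunction applied to the dlt pair $(Y,Y_b)$ (cf.\ \cite{kk-singbook}).

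For conclusion (3), the crucial input is that every step of the MMP (divisorial contraction or flip) is carried out along a $K_Y$-negative extremal ray, whose locus is covered by rational curves; in particular every divisor contracted by the MMP is uniruled. Since $\tilde{X}_c^*$ is a prime divisor in $\tilde{X}$ that is birational to the non-uniruled variety $X_c^*$, it is itself not uniruled and therefore is never contracted by the MMP. Its strict transform in $Y$ is thus a divisor contained in the fiber $Y_b$ over $b\in\pi^{-1}(c)$; being a $K$-trivial component of that fiber, it is birational to $X_c^*$, as required.

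The main obstacle is the simultaneous control of the two bookkeeping tasks: (a) ensuring the relative MMP terminates with a dlt model whose fibers are $K$-trivial, which is handled cleanly by \cite{fujino-ss}; and (b) tracking the component $X_c^*$ birationally through semistable reduction and through every MMP modification. Item (b) is where the two hypotheses of the theorem play off each other — the multiplicity-$1$ assumption prevents $X_c^*$ from being replaced by a genuine finite cover under the base change, while non-uniruledness prevents it from being contracted by the MMP. Once these two observations are in place, the proof becomes essentially a matter of assembling the semistable reduction step and the relative MMP step and tracing the strict transform of $X_c^*$ through both.
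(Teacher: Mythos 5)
Your outline matches the paper's strategy for the semistable reduction step and for the survival of the non-uniruled component, but there is a genuine gap at conclusion (2), which is in fact the heart of the proof. You assert that each special fiber $Y_b$ ``has canonical singularities by adjunction applied to the dlt pair $(Y,Y_b)$.'' Adjunction cannot give this. If $Y_b$ is reducible, adjunction applied to a component $V$ of $Y_b$ yields $K_V\sim (K_{Y/B}+V)|_{V}\sim -(Y_b-V)|_{V}$, i.e.\ $V$ is a log Calabi--Yau variety with a nonzero boundary coming from the other components; such a $V$ is neither $K$-trivial nor canonical. The paper's argument is precisely to rule this out: the non-uniruled component $Z_b^*$ survives the MMP as a component $Y_b^*$ of $Y_b$ (since MMP-exceptional divisors are uniruled), and if $Y_b^{\circ}:=Y_b-Y_b^*$ were nonzero then $-K_{Y_b^*}\sim Y_b^{\circ}|_{Y_b^*}$ would be effective and nonzero, forcing $Y_b^*$ to be uniruled by Miyaoka--Mori, a contradiction. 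Hence $Y_b$ is irreducible. So hypothesis (ii) is used twice --- once to prevent contraction, as you note, and once, via Miyaoka--Mori, to force irreducibility of every fiber --- and your proposal only uses it the first way. This omission also affects your conclusion (3): you show that the strict transform of $\tilde{X}_c^*$ sits inside $Y_b$ and is birational to $X_c^*$, but the statement requires $Y_b$ itself to be birational to $X_c^*$, which again needs irreducibility.

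A second, smaller gap: even once $Y_b$ is known to be irreducible, the easy direction of adjunction only gives that $Y_b$ is klt with numerically trivial canonical class, not canonical. The paper upgrades klt to canonical by passing to the canonical modification $\tau_b:Y_b^c\to Y_b$: if $\tau_b$ contracted a divisor, then $K_{Y_b^c}\sim\tau_b^*K_{Y_b}-E$ with $E$ effective and nonzero, so $-K_{Y_b^c}$ would be effective and nonzero and $Y_b^c$ would be uniruled, again contradicting non-uniruledness; the $\tau_b$-ampleness of $K_{Y_b^c}$ then forces $\tau_b$ to be an isomorphism. Finally, your appeal to ``abundance'' to get $K_Y\sim_{\bQ}0$ over $B$ glosses over a real step: the paper first shows the general fibers are $K$-trivial by comparing crepant birational minimal models (Lemma \ref{lemflops} and Corollary \ref{bir.to.CY.CY}), deduces that $K_{Y/B}$ is numerically a combination of fiber components, and then uses that such a combination is nef if and only if it is numerically trivial.
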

\begin{rema}
As a simple consequence of this theorem, we see that there can be at most one non-uniruled component for the central fiber of a degeneration of $K$-trivial manifolds.
\end{rema}

\begin{proof}[Proof of Theorem   \ref{thmkulikovmodel}] By the semistable reduction theorem,  there is a finite ramified cover $\pi:B\to C$ such that $f_B:B\times_CX\to B$ is birational to a projective morphism
$q:Z\to B$ whose fibers are either smooth or reduced simple normal crossing divisors. Moreover, for $b\in \pi^{-1}(c)$ the fiber $Z_b$ has
at least one irreducible component $Z_b^*$ that admits a generically finite, dominant morphism  $\rho_b:Z_b^*\to X_c^*$. The degree of $\rho_b$ divides the   multiplicity of $X_c^*$ in $X_c$. Thus if the
 multiplicity is 1 then $\rho_b$ is birational.

By assumption the generic fiber is birational to a variety with
canonical singularities and semiample  canonical class.
(This is called a {\it good minimal model}; in our case some multiple of the  canonical class is actually trivial.)
Thus by  \cite[Thm.4.4]{lai-2009}
(see also \cite{fujino-ss})
the minimal model program  for $q:Z\to B$
terminates with a model  $f':Y\to B$ such that
 $Y$ has terminal singularities and
 $K_Y$ is $f'$-nef.

A general fiber of $f'$ has terminal singularities and
nef canonical class and it is also  birational to a K-trivial variety.
Thus general fibers of $f'$ are K-trivial varieties by \cite{kaw-flops} (see Cor. \ref{bir.to.CY.CY} for a precise statement).
 Therefore the canonical class $K_{Y/B}$ is
numerically equivalent to a linear combination of irreducible components of fibers. A linear combination of irreducible components of fibers is
nef iff it is numerically trivial  (hence a  linear combination of fibers).
Thus $K_{Y/B}$ is
numerically $f'$-trivial.

The  key point is to show that the fibers of $f':Y\to B$ are
irreducible  with canonical singularities. In order to do this, pick
$b\in B$. By assumption  $(Z, Z_b)$ is a  simple normal crossing (hence dlt) pair  and
$Z_b$ is numerically $q$-trivial. Thus every step of the
$K_Z$-minimal model program  for $q:Z\to B$ is also a step of the
$(K_Z+Z_b)$-minimal model program  for $q:Z\to B$. Thus
$(Y, Y_b)$ is dlt (cf.\  \cite[1.23]{kk-singbook}), in particular, every irreducible component of $Y_b$ is normal (cf.\ \cite[Sec.3.9]{fujino} or
\cite[4.16]{kk-singbook}).
 The exceptional divisors contracted by a
minimal model program  are uniruled by \cite[5-1-8]{KMM87}. Thus
$Z_b^*$ is not contracted and so it is birational to an
irreducible component $Y_b^*\subset Y_b$ which is therefore not uniruled.
Write $Y_b=Y_b^*+Y_b^{\circ}$
The adjunction formula  (cf.\ \cite[Sec.4.1]{kk-singbook})  now gives that
$$
K_{Y_b^*}\sim \bigl(K_{Y/B}+Y_b^*\bigr)|_{Y_b^*}\sim \bigl(K_{Y/B}-Y_b^{\circ}\bigr)|_{Y_b^*}\sim -Y_b^{\circ}|_{Y_b^*}.
$$
(Note that in general we could have an extra term coming from
singularities of $Y$ along a divisor of $Y_b^*$ but since  $(Y, Y_b)$ is dlt and $Y_b$ is Cartier,  this does not happen, cf.\ \cite[4.5.5]{kk-singbook}.)
If $Y_b^{\circ}\neq 0$ then  $-K_{Y_b^*}$ is effective and nonzero, hence
$Y_b^*$ is uniruled by \cite{mi-mo}; a contradiction.
Thus $Y_b=Y_b^*$ is irreducible. By the easy direction of the adjuction theorem
(cf.\ \cite[4.8]{kk-singbook}) it has only klt singularities and numerically trivial canonical class.

Let  $\tau_b:Y_b^c\to Y_b$ be the canonical modification  of $Y_b$
(cf.\ \cite[1.31]{kk-singbook}).
If $\tau_b$  contracts at least 1 divisor then
  $K_{Y_b^c}\sim \tau_b^*K_{Y_b}-E$ where $E$ is a positive linear combination of the $\tau_b$-exceptional divisors. As before, we get that
 $-K_{Y_b^c}$ is effective and nonzero, hence
$Y_b^c$ is uniruled by \cite{mi-mo}; a contradiction.

Thus $\tau_b$ is an isomorphism in codimension 1 and so
$K_{Y_b^c}\sim \tau_b^*K_{Y_b}$. Since $K_{Y_b^c}$ is $\tau_b$-ample,
this implies that $\tau_b$ is an isomorphism.  Hence
$Y_b$ has  canonical singularities, as claimed.
\end{proof}

\begin{rema} \label{general fiber unchanged} In general, the above construction gives a model $Y\to B$ whose general fibers are only birational to the corresponding fibers of  $X\to C$.
{\it We can modify the construction in order to leave the general fibers unchanged}. Assume first that  general fibers of $f$ are smooth K-trivial manifolds
over an open subset $C^0\subset C$. (This is the only case that we use in this note.)
We can then choose $B\times_CX\to Z$ to be an isomorphism
over $\pi^{-1}(C^0)$ and the minimal model program is then also an
isomorphism over   $\pi^{-1}(C^0)$. Thus we get
$f':Y\to B$ that is isomorphic to $f_B:B\times_CX\to B$
over $\pi^{-1}(C^0)$.

In general, assume that  the generic fiber of $f$ is a K-trivial variety with $\bQ$-factorial terminal singularities.
Let $C^0\subset C$ be an open subset
such that $f^{-1}(C^0)$ has  $\bQ$-factorial terminal singularities.
Let $D_P:=X\setminus f^{-1}(C^0)$,  with reduced structure.
First  construct a dlt modification
(cf.\ \cite[1.34]{kk-singbook}) of  $(X, D_P)$ to get
$(X', D'_P)\to C$ and then  pick any
$\pi: B\to C$ such that, for every $c\in P$, the multiplicities of all irreducible components of $X'_c$ divide the ramification index of
$\pi$ over $c$. After base-change and normalization we get a model
$q:Z\to B$ such that  $(Z, Z_b)$ is locally a quotient of a dlt pair  for every $b\in B$.
(See \cite[Sec.5]{dkx} for the precise  definition of such qdlt pairs  and their relevant properties.)
The rest of the proof now works as before to yield
$f':Y\to B$ that is isomorphic to $f_B:B\times_CX\to B$
over $\pi^{-1}(C^0)$.
\end{rema}

\begin{rema} \label{projective morphism rmk} In the above proof it is essential that $C$ be an algebraic curve. However, one can use \cite{knx} to extend the theorem to the cases when {\it $C$ is either a smooth Riemann surface or  a Noetherian, excellent, 1-dimensional, regular scheme  over a field of characteristic $0$}. However, even when $C$ is a smooth Riemann surface, we still  need to assume that $f:X\to C$ is at least locally projective, though this is unlikely to be necessary.
\end{rema}

The following results are contained in \cite{kaw-flops}, but not explicitly stated there.
For completeness, we state what has been used in the proof of the Theorem \ref{thmkulikovmodel} above.

\begin{lem}\label{lemflops} Let $X_i$ (for $i=1,2$) be projective varieties with canonical singularities and nef canonical classes. Let $p_i:Y\to X_i$ be birational morphisms. Then
$p_1^*K_{X_1}\simq p_2^*K_{X_2}$.
(That is, the birational map  $X_1\map X_2$ is crepant in the terminology of
 \cite[2.23]{kk-singbook}).
\end{lem}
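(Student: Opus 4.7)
The plan is to apply the negativity lemma (cf.\ \cite{kk-singbook}) to the $\bQ$-Cartier $\bQ$-divisor $D := p_1^*K_{X_1} - p_2^*K_{X_2}$ on $Y$, and to conclude $D \simq 0$ by showing that $-D$ is $p_1$-nef with $(p_1)_*D=0$, and simultaneously that $D$ is $p_2$-nef with $(p_2)_*D=0$.

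First I would check the two relative nefness properties. For any curve $C \subset Y$ contracted by $p_1$, $(p_1^*K_{X_1}) \cdot C = 0$ while $(p_2^*K_{X_2}) \cdot C = K_{X_2} \cdot (p_2)_*C \geq 0$ since $K_{X_2}$ is nef on $X_2$; thus $(-D) \cdot C \geq 0$, showing that $-D$ is $p_1$-nef. The $p_2$-nefness of $D$ is symmetric.

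Second I would compute the pushforwards. By the projection formula, $(p_1)_*(p_1^*K_{X_1}) = K_{X_1}$. For $(p_1)_*(p_2^*K_{X_2})$, I would use the fact that the induced birational map $X_1 \map X_2$ between two minimal models with canonical singularities and nef canonical class is an isomorphism in codimension $1$ (the content of \cite{kaw-flops}: any divisor contracted by such a map would be $K$-trivial, and the discrepancy computation on a common resolution would produce a contradiction with the canonical/nef assumptions). Granted iso-in-codimension-$1$, the birational transform of $K_{X_2}$ to $X_1$ coincides with $K_{X_1}$ as Weil divisor classes, because the canonical sheaf is determined in codimension $1$. Hence $(p_1)_*(p_2^*K_{X_2}) = K_{X_1}$ and so $(p_1)_*D = 0$. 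By symmetry, $(p_2)_*D = 0$.

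Third I would invoke the negativity lemma twice. Since $-D$ is $p_1$-nef and $(p_1)_*D = 0$, the lemma gives $D \geq 0$; applying it to $-D$, since $-(-D) = D$ is $p_2$-nef and $(p_2)_*(-D) = 0$, one gets $-D \geq 0$, i.e., $D \leq 0$. Combining, $D = 0$, and therefore $p_1^*K_{X_1} \simq p_2^*K_{X_2}$. The main technical obstacle is the iso-in-codimension-$1$ statement invoked in the second step, which is precisely where Kawamata's flop machinery \cite{kaw-flops} is needed---matching the authors' remark that the lemma ``is part of the proof given there''; everything else is standard negativity-lemma bookkeeping.
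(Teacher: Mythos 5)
Your overall strategy (the negativity lemma applied twice, symmetrically in $p_1$ and $p_2$) is exactly the one the paper uses, and your first and third steps are fine. The gap is in the second step. The claim that the birational map $X_1\map X_2$ is an isomorphism in codimension $1$ is \emph{false} in the generality of the lemma: it holds for $\bQ$-factorial \emph{terminal} minimal models (that is the setting of \cite{kaw-flops}), but not for canonical ones. For instance, if $X_2$ is a $K3$ surface with an ADE singularity and $X_1$ is its minimal (crepant) resolution, both are projective with canonical singularities and nef (indeed trivial) canonical class, yet $X_1\to X_2$ contracts a divisor. Your heuristic that ``a contracted divisor would produce a contradiction with the canonical/nef assumptions'' does not go through --- there is no contradiction, such a divisor simply has discrepancy $0$ over $X_2$; and proving that every divisor contracted by $X_1\map X_2$ has discrepancy $0$ is essentially the content of the lemma itself, so as written the step is circular.

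The fix is both easy and exactly what the paper does: the negativity lemma \cite[3.39]{km-book} does not require $(p_1)_*D=0$, only that $(p_1)_*D$ be \emph{effective}. Writing $K_Y\sim p_i^*K_{X_i}+E_i$ with $E_i$ effective and $p_i$-exceptional (this is where the canonical-singularities hypothesis enters), one has $D=p_1^*K_{X_1}-p_2^*K_{X_2}=E_2-E_1$, hence
$(p_1)_*D=(p_1)_*E_2\ge 0$ since $(p_1)_*E_1=0$. Combined with your (correct) observation that $-D=p_2^*K_{X_2}-p_1^*K_{X_1}$ is $p_1$-nef, negativity gives $D\ge 0$; the symmetric argument gives $D\le 0$, so $D=0$ and $p_1^*K_{X_1}\simq p_2^*K_{X_2}$. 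No statement about the codimension-$1$ structure of $X_1\map X_2$ is needed; it is only a consequence (namely $E_1=E_2$) that contracted divisors are crepant.
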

\begin{proof}
We may assume that $Y$ is normal and projective.
Thus $K_Y\sim p_i^*K_{X_i}+E_i$ where $E_i$ is $p_i$-exceptional and effective since $X_i$ has canonical singularities. Thus
$E_1-E_2\simq  p_2^*K_{X_2}-p_1^*K_{X_1}$ is $p_1$-nef and
$-(p_1)_*(E_1-E_2)=(p_1)_*(E_2)$ is effective. Thus $-(E_1-E_2)$ is
effective by \cite[3.39]{km-book}.  Reversing the roles of $p_1, p_2$ gives that
$-(E_2-E_1)$ is
effective, hence $E_1=E_2$. \end{proof}

\begin{coro} \label{bir.to.CY.CY}
Let $X_i$ be birationally equivalent projective varieties with canonical singularities. Assume that $K_{X_1}\simq 0$ and $K_{X_2}$ is nef. Then
$K_{X_2}\simq 0$. \qed
\end{coro}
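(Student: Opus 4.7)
The plan is to deduce the corollary as a straightforward consequence of Lemma \ref{lemflops}. First I would choose a common birational model: resolve the indeterminacies of the birational map $X_1 \dashrightarrow X_2$ to obtain a normal projective variety $Y$ together with birational morphisms $p_i : Y \to X_i$ for $i=1,2$. The canonical singularities hypothesis ensures $K_{X_i}$ is $\bQ$-Cartier, so the pullbacks $p_i^* K_{X_i}$ make sense as $\bQ$-divisors on $Y$.

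Next I would verify that Lemma \ref{lemflops} applies. The nef hypothesis on $K_{X_2}$ is given; for $X_1$, the assumption $K_{X_1} \simq 0$ implies $K_{X_1}$ is nef (indeed numerically trivial). Both $X_i$ are projective with canonical singularities. Thus the hypotheses of Lemma \ref{lemflops} are satisfied and we conclude
\[
p_1^* K_{X_1} \simq p_2^* K_{X_2}
\]
as $\bQ$-divisors on $Y$. Since $K_{X_1} \simq 0$, the left-hand side is $\bQ$-linearly trivial, so $p_2^* K_{X_2} \simq 0$.

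Finally, pushing forward by the birational morphism $p_2$ and using $(p_2)_* p_2^* K_{X_2} = K_{X_2}$ (valid for $\bQ$-Cartier divisors under a birational morphism), we obtain $K_{X_2} \simq 0$, as desired. There is essentially no obstacle here: the corollary is a direct bookkeeping consequence of the preceding lemma once a common birational resolution is fixed, and the only minor point to check is that the assumption ``$K_{X_1}\simq 0$'' is strong enough to feed into the ``nef'' hypothesis of Lemma \ref{lemflops}, which it is.
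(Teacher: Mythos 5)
Your proof is correct and is exactly the argument the paper intends: the corollary is stated with a \qed precisely because it follows immediately from Lemma \ref{lemflops} in the way you describe (noting $K_{X_1}\simq 0$ gives the nef hypothesis, pulling back to a common resolution, and pushing forward along $p_2$). Nothing to add.
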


%%%%%%%%%%%%%%%%%%%%%%%%%%%%%%%%%%%%%%%%%%% S2 - Monodromy
\section{Cohomologically mild degenerations and Proof of Theorem \ref{theorr}}\label{secmono}
We first give an elementary proof of Theorem \ref{theorr}.
\begin{proof}[Proof of Theorem \ref{theorr}] Let $\mathcal{X}\rightarrow \Delta$ be as in
Theorem \ref{theorr}, with hyper-K\"ahler fibers of dimension $2n$. According to Theorem \ref{thmkulikovmodel}, completed by Remarks
\ref{general fiber unchanged} and \ref{projective morphism rmk}, we can find (after a finite  base change) a model  $\pi':\mathcal{X}'\rightarrow \Delta$  isomorphic
to $\mathcal{X}\rightarrow \Delta$ over $\Delta^*$ and such that the fiber
$X'_0$ has canonical singularities. The morphism $\pi'$ is projective and we can choose a
relative  embedding
$\mathcal{X'}\subset \Delta\times \mathbb{P}^N$. Let $H\subset \mathbb{P}^N$ be a general linear subspace of codimension $2n-2$. Then
$S_0:=H\cap X'_0$ is a surface with canonical singularities and for $t\not=0$,
$S_t:=H\cap X_t$ is smooth (after shrinking $\Delta$ if necessary).
The family
$\mathcal{S}/\Delta:=\mathcal{X}'\cap (\Delta\times H)$ is thus a family of surfaces with
smooth general fiber and central fiber with canonical singularities, hence the monodromy acting
on $H^2(S_t,\mathbb{Z}),\,t\not=0$ is finite. Indeed,  a family of surfaces over the disk  with central fiber having
canonical (or du Val)  singularies, can be simultaneously resolved after a finite base change.
On the other hand, for $t\not=0$, the restriction map
$$H^2(X_t,\mathbb{Z})\rightarrow H^2(S_t,\mathbb{Z})$$
is injective by hard Lefschetz, and commutes with the monodromy action. It follows that
the monodromy acting
on $H^2(X_t,\mathbb{Z})$, $t\not=0$, is also finite.
\end{proof}
\begin{rema} The same argument shows that in a projective degeneration
$\mathcal{X}\rightarrow \Delta$ with smooth general fiber and special fiber
$X_0$ satisfying ${\rm codim}\,( {\rm Sing}\,X_0)\geq k$, the monodromy acting on
$H^l(X_t,\mathbb{Z})$ is trivial for $l<k$. For $k=2$, we can also observe
that we only use the assumption that the central fiber is smooth in codimension one and du Val in codimension
$2$.
\end{rema}
We are now going to discuss the result above from the viewpoint of Hodge theory and
differential forms (similar arguments will be used again in Sections \ref{secsym} and \ref{secdual} below).
The standard tool for studying $1$-parameter degenerations $\calX/\Delta$ of K\"ahler manifolds is the Clemens--Schmid exact sequence (\cite{clemens}). Specifically, this establishes a tight connection between the mixed Hodge structure (MHS) of the central fiber and the limit mixed Hodge Structure (LMHS), which depends only on the smooth family (and not on the central fiber filling). As an application of this, under certain assumptions, one can determine the index of nilpotency for the monodromy $N=\log T$ for a degeneration purely in terms of the central fiber $X_0$ (e.g.  as an application of Kulikov-Persson-Pinkham Theorem and Clemens--Schmid exact sequence, one obtains the properness of the period map for $K3$ surfaces). The big disadvantage of the Clemens--Schmid sequence is that it assumes that $\calX/\Delta$ is a semistable family, which is difficult to achieve in practice. For surfaces, Mumford and Shah \cite{shahinsignificant} proved that one can allow $X_0$ to have mild singularities (called ``insignificant limit singularities'', which in modern terms is the same as Gorenstein semi-log-canonical (slc) singularities in dimension $2$) and still get a tight connection between the MHS on $X_0$ and the LMHS. Shah's method was based on constructing explicit semistable models for this type of singularities and reducing to Clemens--Schmid. Steenbrink noticed however that the true reason behind the close relationship between the MHS on the central fiber and the LMHS is the fact that Shah's insignificant singularities are du Bois. Specifically, we recall:
\begin{definition}[{Steenbrink \cite{steenbrink}}] \label{defist}
We say $X_0$ has {\it cohomologically insignificant singularities}, if for any $1$-parameter smoothing $\calX/\Delta$,  the natural specialization map
$$\spe_k:H^k(X_0)\to H^k_{\lim}$$
is an isomorphism on $I^{p,q}$-pieces (where $I^{p,q}$ denotes the Deligne's components of the MHS) with $p\cdot q=0$.
\end{definition}

\begin{theo}[{Steenbrink \cite{steenbrink}}]\label{theosteen}
If $X_0$ has du Bois singularities, then $X_0$ has cohomologically insignificant singularities.
\end{theo}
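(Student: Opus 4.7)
The plan is to identify the specialization map $sp_k$ at the level of filtered complexes and then use the du Bois hypothesis to conclude an isomorphism on the $F^0$ graded piece. After a finite base change, I would choose a projective modification $\pi:\wt{\calX}\to\calX$ with $\wt{\calX}$ smooth and $E:=\pi^{-1}(X_0)_{\mathrm{red}}$ a reduced simple normal crossing divisor, which induces a proper surjection $\pi_0:E\to X_0$. By Deligne--Du Bois, the MHS on $H^k(X_0)$ is computed by the filtered Du Bois complex $(\underline{\Omega}^{\bullet}_{X_0},F)$ with $F$ the stupid filtration, while by Steenbrink the LMHS on $H^k_{\lim}$ is computed from the relative log de Rham complex $\Omega^{\bullet}_{\wt{\calX}/\Delta}(\log E)|_E$ with its Hodge filtration. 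Pullback along $\pi_0$ extends to a morphism of mixed Hodge complexes whose effect on cohomology coincides with $sp_k$; in particular $sp_k$ is strict with respect to both $F$ and its complex conjugate $\overline F$, and so restricts to a well-defined map on each $I^{p,q}$.

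Granting this compatibility, it suffices to prove that the induced map $\mathrm{gr}^0_F H^k(X_0)\to\mathrm{gr}^0_F H^k_{\lim}$ is an isomorphism, which covers all pieces with $p=0$; conjugating gives the analogous statement for $q=0$, since $sp_k$ is defined over $\mathbb{R}$. Unwinding the definitions, $\mathrm{gr}^0_F H^k(X_0)=\mathbb{H}^k(X_0,\underline{\Omega}^0_{X_0})$ and $\mathrm{gr}^0_F H^k_{\lim}=H^k(E,\mathcal{O}_E)$, and the relevant map is induced by the natural pullback morphism $\underline{\Omega}^0_{X_0}\to R\pi_{0*}\mathcal{O}_E$, where the target is computed via the simplicial scheme $E_{\bullet}$ of strata of $E$.

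Here is where the du Bois hypothesis enters. By definition, the natural map $\mathcal{O}_{X_0}\to\underline{\Omega}^0_{X_0}$ is a quasi-isomorphism; and, using the hyperresolution description of the Du Bois complex via $E_{\bullet}$ (\`a la Guill\'en--Navarro Aznar, cf.\ also Koll\'ar--Kov\'acs \cite{kk}), this amounts to the statement that $\mathcal{O}_{X_0}\to R\pi_{0*}\mathcal{O}_{E_{\bullet}}$ is a quasi-isomorphism. Taking hypercohomology and comparing with $H^{\bullet}(E,\mathcal{O}_E)$ via the Mayer--Vietoris spectral sequence for $E_{\bullet}\to E$ then yields the required isomorphism $H^k(X_0,\mathcal{O}_{X_0})\cong H^k(E,\mathcal{O}_E)$, which is the desired statement on $\mathrm{gr}^0_F$. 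The main obstacle in this plan is the first step, namely promoting the filtered pullback morphism to a genuine morphism of mixed Hodge complexes that realizes $sp_k$ and is strict for both $F$ and $\overline F$; this is the technical heart of Steenbrink's paper, while once strictness is in hand the argument cleanly reduces to the $F^0$ computation above, which is exactly what the du Bois condition delivers.
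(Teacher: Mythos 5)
First, a remark on the comparison you were asked for: the paper does not prove this statement at all. It is quoted from Steenbrink \cite{steenbrink} and used as a black box, so there is no internal proof to measure your attempt against. Judged on its own terms, your outline does reproduce the correct architecture of Steenbrink's argument: realize $sp_k$ as a morphism of mixed Hodge structures, use strictness and complex conjugation to reduce everything to the single claim that $\mathrm{gr}^0_F(sp_k)$ is an isomorphism, and identify the two sides of that graded piece as $\mathbb{H}^k(X_0,\underline{\Omega}^0_{X_0})$ and $H^k(E,\mathcal{O}_E)$. You also honestly flag the first hard step (that pullback genuinely computes $sp_k$ as a filtered morphism) as deferred. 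The difficulty is that there is a second hard step, and your proposed mechanism for it is broken.

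The gap is the sentence asserting that, ``using the hyperresolution description of the Du Bois complex via $E_{\bullet}$,'' the quasi-isomorphism $\mathcal{O}_{X_0}\to\underline{\Omega}^0_{X_0}$ \emph{amounts to} $\mathcal{O}_{X_0}\to R\pi_{0*}\mathcal{O}_{E_{\bullet}}$ being a quasi-isomorphism. The augmented semisimplicial scheme $E_{\bullet}\to X_0$ is not a hyperresolution (nor a smooth proper hypercovering) of $X_0$: already $E_1=\bigsqcup_{i<j} E_i\cap E_j$ fails to surject onto $E_0\times_{X_0}E_0$ whenever $\pi_0$ has positive-dimensional fibers, which is the typical situation (exceptional components of $E$ lying over points of $X_0$). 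Hence $R\pi_{0*}\mathcal{O}_{E_{\bullet}}\simeq R\pi_{0*}\mathcal{O}_E$ does not compute $\underline{\Omega}^0_{X_0}$ by definition, and indeed the comparison map $\underline{\Omega}^0_{X_0}\to R\pi_{0*}\mathcal{O}_E$ is \emph{not} a quasi-isomorphism in general: for a degeneration of plane cubics to a cuspidal cubic, $\mathbb{H}^1(X_0,\underline{\Omega}^0_{X_0})=H^1(\mathbb{P}^1,\mathcal{O}_{\mathbb{P}^1})=0$, while the semistable fiber $E$ is a smooth elliptic curve with $H^1(E,\mathcal{O}_E)=\mathbb{C}$. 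If your mechanism were valid it would apply verbatim with $\underline{\Omega}^0_{X_0}$ in place of $\mathcal{O}_{X_0}$, without any du Bois hypothesis, and would prove that \emph{every} projective degeneration is cohomologically insignificant --- which this example refutes (and the cusp is, of course, not du Bois). So the statement you actually need, namely that for $X_0$ du Bois the induced map $H^k(X_0,\mathcal{O}_{X_0})\to H^k(E,\mathcal{O}_E)$ is an isomorphism for the semistable model, is precisely the substantive content of Steenbrink's theorem; it requires a genuine Hodge-theoretic argument comparing the two filtered complexes (this is where \cite{steenbrink} does its real work), and cannot be extracted formally from the definition of $\underline{\Omega}^0_{X_0}$. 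As it stands, both essential steps of the proof remain open in your write-up.
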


In other words, the original Shah \cite{shahinsignificant} theorem said that if  $X_0$ has insignificant limit singularities (equivalently Gorenstein slc in dimension 2) then $X_0$ has cohomologically insignificant singularities. While Steenbrink noticed that the correct chain of implications is actually:
 $${\rm insignificant \,\, limit \,\,singularities} \Rightarrow {\rm  du\,\, Bois\,\, singularities} \Rightarrow  {\rm cohomologically \,\,insignificant\,\, singularities}.$$
  Three decades later, coming from a different motivation, Koll\'ar--Kov\'acs \cite{kk} (building on previous work by Kov\'acs \cite{Kovacs} and others) have given a vast generalization of Shah's result:

\begin{theo}[{Koll\'ar--Kov\'acs \cite{kk}, \cite[6.32]{kk-singbook}}]\label{theokk}
Let $X_0$ be a variety with slc singularities. Then $X_0$ has du Bois singularities.
\end{theo}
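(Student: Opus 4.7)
The plan is to reduce the problem from the semi-log-canonical (slc) setting to the normal log-canonical (lc) setting via normalization, and then to establish the du Bois property for normal lc varieties using a log resolution together with vanishing theorems and Kov\'acs' splitting trick for the Deligne--Du Bois complex.

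First, I would consider the normalization $\pi:\bar X_0 \to X_0$ together with the conductor divisor $\bar D\subset \bar X_0$. By definition of slc, the pair $(\bar X_0,\bar D)$ is lc. The philosophy is that the non-normal crossings of $X_0$ along the non-normal locus look, up to a finite involution on the conductor, like a simple normal-crossing gluing, so the du Bois property of $X_0$ should be encoded by the du Bois property of the pair $(\bar X_0,\bar D)$. Concretely, one invokes the compatibility of the filtered complex $\underline{\Omega}^\bullet$ with finite maps and a descent argument for the conductor involution to reduce the assertion to: \emph{for an lc pair $(\bar X_0,\bar D)$, the space $\bar X_0$ is du Bois and $\bar D$ is du Bois, and the natural restriction $\underline{\Omega}^0_{\bar X_0}\to \underline{\Omega}^0_{\bar D}$ behaves like the restriction $\mathcal O_{\bar X_0}\to \mathcal O_{\bar D}$}. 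This reduction is essentially formal modulo knowing the lc case.

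Second, for the normal lc case I would take a log resolution $f:Y\to \bar X_0$ such that $E:=f^{-1}(\bar D)_{\mathrm{red}}+\mathrm{Exc}(f)$ is a reduced simple normal crossing divisor, and use the criterion that $\bar X_0$ is du Bois iff the natural map $\mathcal O_{\bar X_0}\to Rf_*\mathcal O_E$ (viewed as part of the Deligne--Du Bois complex $\underline{\Omega}^0_{\bar X_0}$) is a quasi-isomorphism, or equivalently that $\mathcal O_{\bar X_0}(-\bar D)\to Rf_*\mathcal O_Y(-E)$ is one. Kov\'acs' splitting trick exhibits the natural arrow $\mathcal O_{\bar X_0}\to \underline{\Omega}^0_{\bar X_0}$ as a split monomorphism in the derived category, reducing the desired quasi-isomorphism to a vanishing statement for the cone.

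The main obstacle, and the step that genuinely requires the deep MMP technology, is the vanishing of the higher direct images $R^if_*\mathcal O_Y(-E)=0$ for $i>0$ in the lc (as opposed to klt) setting. For klt singularities this is classical Grauert--Riemenschneider; for lc pairs it fails in the naive form and one instead needs the Koll\'ar--Kov\'acs torsion-freeness/vanishing package applied stratum by stratum to the snc divisor $E$. The plan is therefore to induct on dimension along the dual complex of $E$: the codimension-one strata fit into a long exact sequence, and at each step one uses (a) that an snc divisor is du Bois (hence $\underline{\Omega}^0$ is computed by structure sheaves of strata), and (b) Koll\'ar's injectivity/vanishing theorem to control the higher direct images. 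Once the vanishing is in hand, the splitting trick upgrades the split injection to an isomorphism, proving $\bar X_0$ is du Bois; and the compatibility with $\bar D$ comes from the same inductive setup applied to the pair. The slc case then follows from the normalization reduction sketched above.
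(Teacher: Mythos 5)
This statement is not proved in the paper: it is quoted verbatim as a theorem of Koll\'ar--Kov\'acs (with references to \cite{kk} and \cite[6.32]{kk-singbook}) and used as a black box, so there is no internal proof to compare your attempt against. Judged against the actual published argument, your outline does reproduce its overall architecture: normalization to pass from slc to lc, a log resolution, Kov\'acs' splitting criterion exhibiting $\mathcal O_{X_0}\to \underline{\Omega}^0_{X_0}$ as a split monomorphism, and an induction on dimension powered by Koll\'ar-type torsion-freeness and vanishing (in the Ambro--Fujino form needed for lc rather than klt pairs).

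That said, as written this is a roadmap rather than a proof, and the two steps you defer are precisely where the content lies. First, the reduction from slc to lc is not ``essentially formal'': gluing along the conductor with its involution requires the notion of Du Bois \emph{pairs} and a genuine descent statement for $\underline{\Omega}^0$ along the normalization (this is developed in Kov\'acs--Schwede and in \cite{kk}); ``compatibility of $\underline{\Omega}^\bullet$ with finite maps'' alone does not give it, since the normalization map is not \'etale along the conductor and the relevant comparison is between $\underline{\Omega}^0_{X_0}$ and a complex built from $(\bar X_0,\bar D)$ and the quotient of $\bar D$ by the involution. Second, the vanishing $R^if_*\mathcal O_Y(-E)=0$ for $i>0$ in the lc setting is exactly the theorem one is trying to prove in disguise; invoking ``the Koll\'ar--Kov\'acs torsion-freeness/vanishing package'' here is circular unless you actually derive the needed statement from Koll\'ar's injectivity theorem and its Ambro--Fujino extensions, stratum by stratum on $E$. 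So the proposal correctly identifies the skeleton of the known proof but does not supply the arguments that make it work.
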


We immediately get the following consequence which will be improved later on:
\begin{coro}\label{cororr}
Let $f:\calX \to \Delta$ be a projective morphism. Assume that the generic fiber $X_t$ is a smooth hyper-K\"ahler manifold and that the special fiber $X_0$ has canonical singularities and $H^{2,0}(X_0)\neq 0$. Then the monodromy acting on $H^2(X_t),\,t\not=0$, is finite.
\end{coro}
\begin{proof}
By  Theorem \ref{theokk}, the central fiber $X_0$, having canonical singularities (in fact, log canonical suffices), is du Bois. By Theorem  \ref{theosteen}, it follows that any degeneration is cohomologically insignificant, i.e. the natural specialization map $H^2(X_0)\to H^2_{\lim}$ is an isomorphism on the $I^{p,q}$ pieces with $p.q=0$. The assumption is that $I^{2,0}(H^2(X_0))\neq 0$. Since $\dim I^{2,0}_{\lim}+\dim I^{1,0}_{\lim}+\dim I^{0,0}_{\lim}=h^{2,0}(X_t)=1$, the only possibility is that $H^2(X_0)$ and $H^2_{\lim}$ are both pure and agree on the $(2,0)$ and $(0,2)$ parts. In other words,  $H^{2,0}_{\lim}$ and its complex conjugate are contained in the monodromy invariant
 part $H^2_{\textrm{inv}}$ of $H^{2}_{\lim}$. The Hodge structure on $H^2_{\textrm{inv}}$ is pure with
   $h^{2,0}=1$ and the restriction to  $H^2_{\textrm{inv}}$ of the  monodromy invariant pairing
 determined by the class $l$ of  a relatively ample line bundle is nondegenerate.
    By the Hodge index theorem,  $M_\mathbb{Z}=(H^2_{\textrm{inv}})^{\perp}\cap H^2_{\lim,\mathbb{Z}}\subset
    H^2_{\lim,\mathbb{Q}}$ is a negative definite lattice. In particular, $O(M_\mathbb{Z})$ is a finite group and, as  the monodromy action on $H^2_{\lim}$ factors up to a finite group through  $O(M_\mathbb{Z})$, it is finite.
\end{proof}

To strengthen the previous corollary, we consider the situation coming from the KSBA theory of compactifications of moduli. Namely, we are interested in degenerations (flat and proper) $\calX/\Delta$ which have the property that $K_{\calX}$ is $\bQ$-Gorenstein and the central fiber (is reduced and) has slc singularities. We call such  $\calX/\Delta$ a {\it KSBA degeneration}. If we assume additionally that $K_{\calX}$ is relatively nef, we call it a {\it minimal KSBA degeneration}. The total space of such a degeneration will have canonical singularities, and if needed, one can apply a terminalization, and obtain the so called {\it minimal dlt model}.

\begin{theo}\label{theomon1}
Let $\calX/\Delta$ be a projective degeneration of hyper-K\"ahler manifolds. Assume that $\calX/\Delta$ is a minimal KSBA degeneration (i.e. $K_{\calX}\equiv 0$ and $X_0$ is slc). Then the following are equivalent:
\begin{itemize}
\item[(1)] The monodromy action on $H^2(X_t)$ is finite.
\item[(2)] The special fiber $X_0$ has klt singularities (or equivalently, since Gorenstein degeneration, canonical singularities).
\item[(3)] The special fiber $X_0$ is irreducible and not uniruled (which in turn is equivalent to $X_0$ having a component that is not uniruled).
\end{itemize}
\end{theo}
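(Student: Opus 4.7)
My plan is to establish the three-way equivalence via the chain $(2)\Leftrightarrow(3)$, $(2)\Rightarrow(1)$, and $(1)\Rightarrow(2)$. The first parts are MMP/adjunction arguments combined with Corollary~\ref{cororadu}; the last uses the Koll\'ar--Kov\'acs/Steenbrink du Bois machinery together with the theory of symplectic singularities.

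For $(3)\Rightarrow(2)$ I would reuse the adjunction argument from the proof of Theorem~\ref{thmkulikovmodel}. Suppose $X_0$ has a non-uniruled component $X_0^*$ and write $X_0=X_0^*+X_0^\circ$. Adjunction for the Cartier divisor $X_0^*\subset\calX$ combined with $K_\calX\equiv 0$ gives $K_{X_0^*}+\mathrm{Diff}\equiv -X_0^\circ|_{X_0^*}$; if $X_0^\circ\ne 0$, the connectedness of $X_0$ forces this to be a nonzero effective divisor, whence $-K_{X_0^*}$ is nonzero effective and $X_0^*$ is uniruled by Miyaoka--Mori, a contradiction. Thus $X_0$ is irreducible, so slc collapses to lc. If $X_0$ were lc but not klt, a dlt modification $\pi\colon\tilde X_0\to X_0$ would give $\pi^*K_{X_0}=K_{\tilde X_0}+F$ with $F$ the nonzero reduced sum of discrepancy-$(-1)$ places, so $K_{\tilde X_0}\equiv -F$ is nonzero anti-effective, making $\tilde X_0$ (and hence $X_0$) uniruled by Miyaoka--Mori --- again a contradiction. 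So $X_0$ is klt, which equals canonical in the Gorenstein setting.

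The converse $(2)\Rightarrow(3)$ is immediate: klt implies normal, hence $X_0$ is irreducible by connectedness, and for a resolution one has $K_{\tilde X_0}=\pi^*K_{X_0}+E=E$ with $E$ effective exceptional, so $K_{\tilde X_0}$ is pseudoeffective and $\tilde X_0$ is not uniruled by Boucksom--Demailly--P\u{a}un--Peternell. The step $(2)\Rightarrow(1)$ is a direct application of Corollary~\ref{cororadu}: canonicity is given, and $H^{2,0}(X_0)\ne 0$ will follow by upper-semicontinuity of $h^{2,0}$ for the relative reflexive differentials, since $\omega_{\calX/\Delta}\cong\mathcal O_\calX$ and $X_0$ has rational singularities, allowing the nearby symplectic form to extend across $X_0$.

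The subtle direction is $(1)\Rightarrow(2)$. By Theorems~\ref{theokk} and~\ref{theosteen}, the slc fiber $X_0$ is du Bois and the specialization $H^2(X_0)\to H^2_{\lim}$ is an isomorphism on the $I^{p,q}$ pieces with $p\cdot q=0$. Finite monodromy means $N=0$, so $H^2_{\lim}$ is pure of weight~$2$ with $h^{2,0}_{\lim}=1$; consequently $I^{2,0}(H^2(X_0))\ne 0$ and $H^{2,0}(X_0)\ne 0$. Using $\omega_{\calX/\Delta}\cong\mathcal O_\calX$, this Hodge class lifts to a reflexive $2$-form $\sigma$ on $X_0$ whose $n$-th wedge nowhere vanishes, so $\sigma$ is symplectic on the smooth locus of $X_0$. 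The hard part will be to conclude from this that $X_0$ is actually normal with canonical singularities; my plan is to invoke the Beauville--Namikawa theory of symplectic singularities (as developed in Section~\ref{secsym}), since a projective variety admitting a symplectic form on its smooth locus that extends to a resolution has rational Gorenstein canonical singularities and is in particular normal, yielding (2).
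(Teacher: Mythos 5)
Your $(2)\Leftrightarrow(3)$ argument is essentially the one the paper relies on (it is Fujino's Theorem II, reproved by the adjunction/Miyaoka--Mori computation already carried out in the proof of Theorem \ref{thmkulikovmodel}), and it is fine. For $(2)\Rightarrow(1)$, note that the paper never needs $H^{2,0}(X_0)\neq 0$: it uses that a klt variety has a \emph{pure} Hodge structure on $H^2(X_0)$ (via the extension theorems for reflexive forms, as worked out by Schwald) and is du Bois, so the specialization isomorphism on the $I^{p,q}$ with $p\cdot q=0$ kills $I^{0,0}$, $I^{1,0}$, $I^{0,1}$ of $H^2_{\lim}$, and the symmetry of the monodromy weight filtration then forces $H^2_{\lim}$ to be pure, i.e.\ $N=0$. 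Your detour through Corollary \ref{cororadu} requires producing a $(2,0)$-class on $X_0$, and ``upper-semicontinuity of $h^{2,0}$ for relative reflexive differentials'' is not an available fact (these sheaves are not flat over $\Delta$). This step is repairable --- Steenbrink's base-change results give $h^{0,2}(X_0)=h^{0,2}(X_t)=1$ and purity then gives $h^{2,0}(X_0)=1$, exactly as in Section \ref{secsym} --- but as written it is a gap.

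The serious problem is $(1)\Rightarrow(2)$. The Beauville--Namikawa theory of symplectic singularities takes as input a \emph{normal} variety together with a symplectic form on its smooth locus that \emph{extends to a resolution}; neither hypothesis is available to you, and neither follows from what you have established. A non-normal slc fiber (say two components glued along a divisor) has a smooth locus on which a $2$-form may perfectly well be nondegenerate on one component, so ``symplectic on the smooth locus'' cannot imply normality or irreducibility. Likewise, your claim that $\sigma^n$ is nowhere vanishing is unjustified: on a component $(V,D)$ of a non-klt fiber, $\sigma^n$ is a priori a section of $\omega_V\subset\omega_V(D)$, which is not trivial when $D\neq 0$; and the extension of $\sigma$ to a resolution is precisely the content of what must be proved. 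The paper instead argues by contraposition: if $X_0$ is not klt, pass to a minimal dlt model, whose components are log Calabi--Yau pairs $(V,D)$ with $D\neq 0$; adjunction makes the minimal stratum a $K$-trivial variety carrying a holomorphic $(2n-k)$-form for some $k\geq 1$; the Mayer--Vietoris spectral sequence of Appendix \ref{dltpairs} places this class in $I^{2n-k,0}(H^{2n}(X_0))$; the du Bois property transfers it to $I^{2n-k,0}(H^{2n}_{\lim})\neq 0$; and Verbitsky's injection $\Sym^nH^2_{\lim}\hookrightarrow H^{2n}_{\lim}$ shows this is incompatible with purity of $H^2_{\lim}$, since $I^{2n,0}(H^{2n}_{\lim})$ and $I^{2n-k,0}(H^{2n}_{\lim})$ cannot both be nonzero when $h^{0,2n}(X_t)=1$. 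You would need to supply an argument of this kind; the symplectic-rigidity results of Section \ref{secsym} only become usable \emph{after} one knows that $X_0$ is irreducible with canonical singularities.
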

\begin{rema}\label{remktirvmin}
The assumptions of $K$-triviality and minimality are clearly essential: a degeneration of curves to compact type has finite monodromy, but slc central fiber. Similarly, the blow-up of a family of elliptic curves gives a counterexample if we remove the minimality assumption.
\end{rema}

In the proof of Theorem \ref{theomon1}, as well as in Sections \ref{secsym} and \ref{secdual}, we will use the following result of Verbitsky.
\begin{theo}[{Verbitsky \cite[Thm 1.5]{verbitskycoh}, \cite{bogomolov}}]  \label{Verbitsky cohomology}
Let $X$ be a hyper--K\"ahler manifold of dimension $2n$. For  every $k=1, \dots, n$, the natural morphism
\[
\Sym^k H^2(X) \to H^{2k}(X)
\]
is injective.
\end{theo}

\begin{proof}[Proof of Theorem \ref{theomon1}]
The equivalence between (2) and (3) is due to Fujino \cite[Theorem II]{fujino-ss} (depending heavily on \cite{mi-mo}); see also the proof of Theorem \ref{thmkulikovmodel}.

The implication (2) $\Longrightarrow$ (1) is similar to Corollary \ref{cororr}. Namely, a variety $X_0$ with klt singularities has a pure Hodge structure on $H^1(X_0)$ and $H^2(X_0)$. This follows from the extension of holomorphic forms on such varieties (see \cite{gkkp}), and it is worked out in Schwald \cite{schwald}. Since, klt varieties are du Bois, it follows that the limit MHS in degree $2$ is pure. As before, this is equivalent to the monodromy being finite.

 To conclude the proof, it remains to see that if the monodromy is finite, the central fiber $X_0$ has to be klt. This follows from arguments given in Koll\'ar--Xu \cite[Sect. 4]{kx} and Halle--Nicaise \cite[\S3.3, esp. Thm. 3.3.3]{hn}. For completeness, we sketch the proof. Assume thus that $X_0$ is not klt.
After  possibly changing to a minimal dlt model, we see that $X_0$ has several irreducible components (since $X_0$ is not klt), and that each of the   components of $X_0$ are log Calabi-Yau varieties $(V,D)$ with $D\not \equiv0$. Since $D$ is an effective anti-canonical divisor, we conclude that $V$ is uniruled. On the other hand, by adjunction, we see that $D$ is a $K$-trivial variety.  We have two possibilities: either $D$ has canonical singularities, or $D$ is strictly log canonical. (For example, for a degenerations of $K3$ surfaces, the connected components of $D$ are either elliptic curves or cycles of rational curves. Furthermore, in the latter case, each irreducible component is rational with $2$ marked points, and thus log Calabi-Yau.)
Consider first the case that $D$ has canonical singularities.   For simplicity, we will further assume that $D$ is connected and thus irreducible. (In general, $D$ has at most two disconnected components \cite[\S32, \S16]{kx}, and the situation can be handled by similar arguments.) Under these assumptions,  there are two things to notice. First, $V$ has canonical singularities and it is uniruled,
and thus there is no top holomorphic form on it, giving $I^{2n,0}(H^{2n}(V))=0$. On the other hand, $D$ has canonical singularities, and it is $K$-trivial. Thus, $I^{2n-1,0}(H^{2n-1}(D))\neq 0$. Now the cohomology of $X_0$ is computed by a Mayer--Vietoris spectral sequence from the cohomology of the components $V$ and of the intersection strata $D$ (under our assumptions, there are only codimension $1$ strata).
In is immediate to see that $I^{2n,0} (H^{2n}(X_0))=0$ and $I^{2n-1,0} (H^{2n}(X_0))\neq 0$ (as in \ref{cororr}, it holds $\sum \dim I^{p,0} (H^{2n}(X_0))=1$). Using again slc $\Longrightarrow$ du Bois $\Longrightarrow$ cohomologically insignificant, we conclude $I^{2n,0}(H^{2n}_{\lim})=I^{2n,0}(H^{2n}(X_0))=0$. Using Verbitsky's Theorem \ref{Verbitsky cohomology}, it follows that the Hodge structure on $H^2_{\lim}$ is not pure (if $H^2_{\lim}$ were pure, then $I^{2,0} H^2_{\lim}\neq 0$, which in turn would give $I^{2n,0} H^{2n}_{\lim}\neq 0$, a contradiction). This means that the monodromy action on $H^2(X_t)$ is not finite, concluding the proof (under our assumptions on $D$).

The argument above carries through in the general case. The key point is that only the top dimensional components $V$ of (the natural stratification of) $X$ can contribute to $I^{2n,0}(H^{2n}(X_0))$. In fact, inductively, we can define $X_0^{[0]}$ to be the disjoint union of the components $V$, $X_0^{[1]}$ to be the disjoint union of the components of the double locus (or equivalently the log canonical center) $D$, and so on (cf. Appendix). Then, one can see that $I^{2n-k,0}(H^{2n}(X_0))\neq 0$ precisely for the deepest codimension $k$ stratum $X_0^{[k]}\neq \emptyset$. Namely, as before, $X_0^{[k]}$ is $K$-trivial with canonical singularities, while the higher dimensional strata $X_0^{[l]}$ ($l<k$) are uniruled. Then, the claim follows via  a spectral sequence analysis as is \cite[Claim 32.1]{kx}  and  \cite[(32.2)]{kx}.  (Since we are interested only in the holomorphic part of the cohomology,  one can work as if $X_0$ is simple normal crossings. The precise statements in  the dlt situation are discussed in Appendix \ref{dltpairs}, see esp. Cor. \ref{spectral sequence}.)
\end{proof}

 We conclude this section with another proof of Theorem \ref{theorr}.

\begin{proof}[Second proof of Theorem \ref{theorr}]
By Theorem \ref{thmkulikovmodel}, after finite base change and birational transformations, we arrive at a minimal dlt model such that the central fiber consists of a unique non-uniruled component with canonical singularities. By  Theorem \ref{theomon1}, the monodromy acting on $H^2(X_t)$ is finite.
\end{proof}

%%%%%%%%%%%%%%%%%%%%%%%%%%%%%%%%%%%%%%%%
%%% S3 - Proof1
\section{Proof of Theorems \ref{theo1},  \ref{theo2} and \ref{theo3}}\label{secmain}
\begin{proof}[Proof of Theorems \ref{theo2} and  \ref{theo3}]  Let $\mathcal{X}\rightarrow \Delta$ be a projective morphism with smooth hyper-K\"ahler fibers over $\Delta^*$, satisfying the hypothesis
of Theorem \ref{theo2}. By assumption, one component of the central fiber  is not uniruled.  By Theorem  \ref{theorr}, after performing a  finite base change,  we can assume
  that the monodromy acting on $H^2(X_t),\,t\not=0$ is trivial.
We are now reduced to the situation of Theorem \ref{theo3}. Using a relatively ample line bundle on
$\mathcal{X}\rightarrow \Delta$, the fibers $X_t$  are projective with a given polarization
$l:=c_1(\mathcal{L}_{\mid X_t})$.
Let $q$ be the Beauville-Bogomolov form on $H^2(X_{t_0},\mathbb{Q})$ for some given
$t_0\in \Delta^*$ and
let $$\mathcal{D}_l=\{\eta\in \mathbb{P}(H^2(X_{t_0},\mathbb{C})),\,q(\eta)=0,\,q(\eta,\overline{\eta})>0,\,q(\eta,l)=0\}$$
be the polarized period domain for deformations of $(X_{t_0},l)$.
The monodromy being trivial, the period map $\mathcal{P}^*:\Delta^*\rightarrow \mathcal{D}_l$
is  well defined and by \cite{Gri}, it extends to a holomorphic map
$\mathcal{P}:\Delta\rightarrow \mathcal{D}_l$. (Note that  this is one  place where we seriously use the projectivity assumption. Griffiths' extension theorem only holds for polarized period maps.)

By \cite{huybrechts}, the unpolarized  period map is surjective from any
connected component of the marked deformation space of $X_t$ to $\mathcal{D}_l$.
Thus there is a hyper-K\"ahler manifold $X'_0$ which is deformation equivalent
to $X_t$,  with period point
$\mathcal{P}(0)\in\mathcal{D}_l$. Finally, as $q(l)>0$, $X'_0$ is projective by \cite{huybrechts}.
The local period map $B_l\rightarrow \mathcal{D}_l$ is a local holomorphic diffeomorphism,
where $B_l$ is a ball in the universal deformation space
 of the pair $(X'_0,l)$ consisting
of a hyper-K\"ahler manifold and a degree $2$ Hodge class on it, and thus
the holomorphic disk $\mathcal{P}:\Delta\rightarrow\mathcal{D}_l$ can be seen (after shrinking) as a holomorphic disk
$\Delta\rightarrow B_l$. Shrinking $B_l$ and $\Delta$ if necessary and restricting to $\Delta$ the universal family $\mathcal{X}_{\textrm{univ}}\rightarrow B_l$
(which exists over $B_l$), there is a family
$\mathcal{X}'\rightarrow \Delta$ of marked hyper-K\"ahler
manifolds in the same deformation class as $X_t$. Furthermore, by construction, the associated period map
$\mathcal{P}'$ can be identified with $\mathcal{P}$.

We now apply Verbitsky's Torelli theorem \cite{verbi}, which allows us to conclude that
for any $t\in \Delta^*$, $X'_t$ and $X_t$ are birational.
Furthermore, as the family $\pi':\mathcal{X}'\rightarrow \Delta$ is smooth proper
with K\"ahler fibers, there exists a $\mathcal{C}^\infty$ $(1,1)$-form on $\mathcal{X}$ whose restriction to the fibers $X_t$ are K\"ahler, hence provides a $\mathcal{C}^\infty$
family
$(\omega_t)_{t\in \Delta_t}$ of K\"ahler classes in the fibers of
$\pi'$. Moreover, we also know that  the morphism
$\pi:\mathcal{X}\rightarrow \Delta$ is projective. It follows (see
\cite{bishop}) that
the  relative Douady space over $\Delta$ (analytic version of the relative Hilbert scheme)
of subschemes in fibers of $X'_t\times X_t$
is a countable union of analytic varieties which are proper over $\Delta$.
Furthermore, note that for each component
$S$ of this relative Douady space with corresponding family $\Gamma_S\rightarrow S\rightarrow \Delta,\,\Gamma_S\subset \mathcal{X}_S\times_S\mathcal{X}'_S$, with
$\mathcal{X}_S=\mathcal{X}\times_\Delta S,\,\mathcal{X}'_S=\mathcal{X}'\times_\Delta S$, the property
that $\Gamma_{S,t}$ is the graph of a birational map between $X_s$ and $X'_s$
is Zariski open  in $S$.  Finally, graphs of birational maps between two hyper-K\"ahler manifolds are rigid, so each such component $S$ containing at least one graph
 of a birational map between smooth fibers $X_s$ and $X'_s$  has dimension either $0$ or  $1$. Of course, the union of components of dimension $0$ provide only countably many points in
 $\Delta^*$. Thus,
we conclude that $\Delta^*$ minus countably many points is the union of
the images of the maps $S^0\rightarrow \Delta^*$, over the countably
many  $1$-dimensional components
$S$  admitting a dense Zariski open subset
$S^0$ over $\Delta^*$ such that the cycles $\Gamma_{S,s}$ parameterized by $s\in S^0$ are
graphs of birational maps between the fibers of both families.  Hence there exists such an $S$ which
dominates $\Delta$. We may assume that $S$ is smooth and,    by properness,  that the map
$S\rightarrow \Delta$ is finite and surjective.
The universal subvariety $\Gamma_S\subset \mathcal{X}_S\times_S\mathcal{X}'_S$ provides the desired
fibered birational isomorphism.
\end{proof}
\begin{rema}{\rm The arguments given here are very similar to those used
in \cite{burnsrapoport} and even  simpler since we
have Verbitsky's theorem, while Burns and Rapoport use them to prove Torelli's theorem
for $K3$ surfaces.
}
\end{rema}
Let us note the following consequence of Theorem \ref{theo2}.
\begin{coro}\label{corofinitemonoevrywhere}  Assumptions as in Theorem
\ref{theo2}. The monodromy action on $H^k(X_t)$ is finite for any $k$.
\end{coro}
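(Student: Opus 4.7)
The plan is to reduce to the smooth filling given by Theorem \ref{theo2}, where the monodromy is trivial by Ehresmann's theorem, and to transport this triviality back to the original family via the bimeromorphic correspondence.

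First, I would invoke Theorem \ref{theo2} to produce, after a finite base change $\pi:S\rightarrow\Delta$ which I may take to be locally of the form $t\mapsto t^d$ with $S$ a disc, a smooth proper morphism $\pi':\mathcal{Z}'\rightarrow S$ with projective hyper-K\"ahler fibers, together with a bimeromorphic correspondence $\Gamma_S\subset \mathcal{Z}_S\times_S\mathcal{Z}'_S$; over a dense open $S^0\subset S^*$, this restricts fiberwise to the closure of the graph of a birational map $Z_{t(s)}\dashrightarrow Z'_s$ between smooth projective hyper-K\"ahler manifolds. Since $\pi'$ is smooth and proper over the contractible disc $S$, Ehresmann's theorem gives that $\pi'$ is a $\mathcal{C}^\infty$ fiber bundle, and hence the monodromy of $\pi_1(S^*,s)$ on $H^k(Z'_s,\mathbb{Q})$ is trivial for every $k$.

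Next, for each $s\in S^0$ I would argue that the cycle class $[\Gamma_{S,s}]$ induces an isomorphism $H^k(Z_{t(s)},\mathbb{Q})\rightarrow H^k(Z'_s,\mathbb{Q})$ of Hodge structures: the graph correspondence of a birational map between projective hyper-K\"ahler manifolds always realizes the comparison isomorphism of cohomology, which follows from Huybrechts' Theorem \ref{theohuy} (after a small simultaneous deformation the birational map becomes biregular, so the graph correspondence specializes from an actual isomorphism). As $\Gamma_S$ is a globally defined algebraic subvariety over $S^0$, these isomorphisms are flat in $s$ and hence intertwine the monodromy representations on both sides. Combined with the previous paragraph, the monodromy of $\mathcal{Z}_S\rightarrow S$ on $H^k(Z_{t(s)},\mathbb{Q})$ is therefore trivial. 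But this monodromy equals $T_k^d$, where $T_k$ denotes the monodromy of the original family $\mathcal{Z}\rightarrow\Delta$ on $H^k(Z_t,\mathbb{Q})$; hence $T_k^d=\mathrm{id}$ and $T_k$ has finite order dividing $d$ for every $k$.

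The only delicate step, where I expect the main obstacle to lie, is the verification that $[\Gamma_{S,s}]_*$ is an isomorphism of cohomology groups and that its dependence on $s$ is compatible with parallel transport --- this is the place where the hyper-K\"ahler structure enters essentially, through Huybrechts' Theorem \ref{theohuy}. Everything else is formal once Theorem \ref{theo2} is available.
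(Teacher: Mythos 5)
Your overall strategy --- pass to the smooth filling from Theorem \ref{theo2}, observe that its monodromy is trivial, and transport this back through a relative cycle --- is exactly the strategy of the paper's proof. The problem sits in the step you yourself flag as delicate. You claim that the fiberwise cycle $\Gamma_{S,s}$ (the closure of the graph of the birational map $Z_{t(s)}\dashrightarrow Z'_s$ coming out of Theorem \ref{theo2}) induces an isomorphism on every $H^k$, justified by saying that after a small simultaneous deformation the birational map becomes biregular, ``so the graph correspondence specializes from an actual isomorphism.'' That is not what Huybrechts' Theorem \ref{theohuy} gives: the limit of the graphs of the nearby isomorphisms is a cycle $\Gamma_s\subset Z_{t(s)}\times Z'_s$ which contains the closure of the graph as one component but in general has extra components supported over the indeterminacy loci (already for a Mukai flop of a fourfold, the graph closure alone does not realize the comparison isomorphism on $H^4$; one must add the component $P\times P^\vee$). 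So the cycle you actually have a global family of, namely $\Gamma_{S,s}$, is not known to induce an isomorphism on $H^k$ for $k>2$, while the cycle that does induce an isomorphism, $\Gamma_s$, is produced fiber by fiber and is not a priori the fiber of a global correspondence over $S$.

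The paper closes exactly this gap with one more application of the properness of the relative Douady space: since for each $s$ Huybrechts supplies a cycle $\Gamma_s$ inducing a ring isomorphism $H^*(Z_{t(s)},\mathbb{Z})\cong H^*(Z'_s,\mathbb{Z})$, and the relative Douady space of $\mathcal{Z}_S\times_S\mathcal{Z}'_S$ is a countable union of components proper over $S$, a Baire category and properness argument yields (possibly after a further base change) a single relative cycle $\Gamma\subset\mathcal{Z}_S\times_S\mathcal{Z}'_S$ whose fibers induce these isomorphisms. The fiberwise classes of $\Gamma$ are then locally constant in $s$, and your intertwining argument goes through verbatim. With the graph closure replaced by this global Huybrechts cycle, your proof coincides with the paper's.
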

\begin{rema} Note that this corollary  is not a trivial consequence of Theorem \ref{theo2} because Theorem \ref{theo2} does not say that
the original family, after pullback, can be filled-in with a smooth central fiber. It just says that this can be done after pullback and replacing the family by another one with bimeromorphic and hyper-K\"ahler fibers.
\end{rema}
\begin{proof}[Proof of Corollary \ref{corofinitemonoevrywhere}]  Huybrechts' theorem \ref{theohuy} tells us that $X_t$ and $X'_t$ are deformation equivalent. It also says a little more: for any $t\in \Delta^*$, there exists
a cycle $\Gamma_t$ in $X_t\times X'_t$ which is a limit of graphs
of isomorphisms between deformations of $X_t$ and $X'_t$
and thus induces an isomorphism of cohomology rings
\begin{eqnarray}\label{eqisoavion} H^*(X_t,\mathbb{Z})\cong H^*(X'_t,\mathbb{Z}).
\end{eqnarray}
As the two families are K\"ahler over $\Delta$, we can use properness of the relative Douady spaces to conclude that possibly after base change, there exists a cycle
$\Gamma\in \mathcal{X}\times_\Delta\mathcal{X}'$ whose restriction $\Gamma_t$ induces
the isomorphism (\ref{eqisoavion}). The monodromy action on $H^k(X_t)$ thus becomes
 trivial  after base change for any $k$.
\end{proof}
\begin{proof}[Proof of Theorem \ref{theo1}] The proof follows closely
the proof of Huybrechts \cite[Theorem 4.6]{huybrechts}. Under the assumptions of Theorem
\ref{theo1}, Theorem \ref{theo2} gives us
a birational map $\phi:\mathcal{X}'_S\dashrightarrow \mathcal{X}_S$ over a finite cover
$S$ of $\Delta$, where $\mathcal{X}'_S$ is smooth over $S$ with hyper-K\"ahler fibers.
Let us blow-up $\mathcal{X}_S$ until it becomes smooth, say
$\widetilde{\mathcal{X}}_S$, and then let us blow-up $\mathcal{X}'_S$ successively along smooth centers until
the rational map $\phi$ induces a morphism
$\tilde{\phi}:\widetilde{\mathcal{X}}'_S\rightarrow \widetilde{\mathcal{X}}_S$ over $S$. By assumption, the central fiber
$X_0$ of our original family has a multiplicity $1$ component $V$ which is birational to the smooth hyper-K\"ahler manifold
$Z'_0$ (which is projective, as it is Moishezon and K\"ahler). The proper transform $\widetilde{V}$ of $V$ is thus birational to $V$ and also appears
as a multiplicity $1$ component of the central fiber of $\widetilde{\mathcal{X}}_S\rightarrow S$.
(It is at this point that we use the fact that $V$ is a multiplicity $1$ component of
$X_0$; otherwise the desingularization process needed to produce $\mathcal{X}'_S$
can involve a normalization which replaces $V$ by a generically finite cover of it.)
As $\tilde{\phi}$ is proper and birational, exactly one component $V'$ of the central
fiber of $\widetilde{\mathcal{X}}'_S\rightarrow S$ maps onto $\widetilde{V}$
 and the morphism $V'\rightarrow \widetilde{V}$ is birational.
 Hence $V'$ is birational to $Z'_0$. On the other hand, as ${\mathcal{X}}'_S$ is smooth,
 all the exceptional divisors of $\widetilde{\mathcal{X}}'_S\rightarrow \mathcal{X}'_S$
 are uniruled, and thus the only component  of the central fiber which can be birational to a hyper-K\"ahler
 manifold is the proper transform of $X'_0$ (via $\widetilde{\mathcal{X}}'_S\rightarrow \mathcal{X}'_S$). It follows that $V'$ is birational to $X'_0$. Thus we proved that
 $X'_0$ and $Z'_0$ are birational.
By Huybrechts' theorem \cite{huybrechts} (Theorem \ref{theohuy}), it follows that
the two hyper-K\"ahler manifolds $X'_0$ and $Z'_0$  are deformation equivalent.
On the other hand, $X'_0$ is by definition deformation equivalent
to $X'_t$ which is birational to $X_t$ for $t\not=0$, hence is deformation equivalent
to $X_t$ by  Huybrechts' theorem again, since $X_t$ and $X'_t$ are smooth. We conclude that $X_t$ is deformation equivalent to $Z_0'$ as claimed.
\end{proof}

%%%%%%%%%%%%%%%%%%%%%%%%%%%%%%%%%%%%%%%%
%%% S4 - Proof2
\section{Symplectic singularities; Alternative proof to Theorems \ref{theo2} and \ref{theo3}}\label{secsym}
As previously discussed, the filling Theorems \ref{theo2} and \ref{theo3} are results specific to hyper-K\"ahler manifolds (see also Remark \ref{remfinitemoncy}). In the previous section, we proved these theorems by using the deep results (Torelli and surjectivity) for the period map for hyper-K\"ahler manifolds due to Verbitsky \cite{verbi} and Huybrechts \cite{huybrechts}. The MMP results are only tangentially used. In this section, we give an alternative proof to Theorems \ref{theo2} and \ref{theo3} relying  on the  MMP results  of Section \ref{secmmp} as well as on results already used in Section \ref{secmono}. The key point of this alternative proof is to notice that in the case of a minimal dlt degeneration $\calX/\Delta$ of hyper-K\"ahler manifolds with finite monodromy, the central fiber $X_0$ has {\it symplectic singularities} in the sense of Beauville \cite{Beauville}. The filling theorems now follow from  the results of Namikawa \cite{namikawa1, namikawa2}, which roughly say that the symplectic singularities are rigid (and thus, if $X_0$ is not smooth, there is no smoothing).

\begin{definition} \label{symplecticres}
A a variety $Y$ with canonical singularities is called a \emph{symplectic variety} in the sense of Beauville \cite{Beauville} if the smooth locus of $Y$ carries a holomorphic symplectic form with the property that it extends to a holomorphic form on any resolution of $Y$. A resolution $f: \wt Y \to Y$ is called \emph{symplectic} if the symplectic form on the smooth locus of $Y$ extends to a global holomorphic symplectic form on $\wt Y$. (Note that a symplectic resolution is, in particular, crepant.)
\end{definition}

\begin{proof}[Another proof of Theorems \ref{theo2} and \ref{theo3}] We start as in the proof of Theorem \ref{theorr}. By Remark \ref{projective morphism rmk} we can apply Theorem \ref{thmkulikovmodel} to the projective morphism $f: \mc X  \to \Delta$.
This gives,
possibly after a base change $\pi: \Delta \to \Delta$, a projective morphism $\mc Y \to \Delta$ and a birational map $h: \mc Y \dashrightarrow \mc X \times_\Delta \Delta$ which induces a birational map from the central fiber $Y_0$, which is a K-trivial variety with canonical singularities, to  $X_0^*$. By Remark \ref{general fiber unchanged}, we can also ensure that $h$ induces an isomorphism between the fibers $Y_t $ and $X_{\pi(t)}$, for $t \neq 0$.

We claim that $Y_0$ is {\it a symplectic variety} in the sense of Beauville. As already noted $Y_0$ has canonical singularities. To check that the smooth locus of $Y_0$ carries a holomorphic $2$-form that is symplectic and that it extends to resolutions, we use  arguments similar to those of Section \ref{secmono}, but some extra care is needed to be able to interpret $H^{2,0}(Y_0)$ as holomorphic forms (N.B. $Y_0$ is singular). To start, since $Y_0$ has canonical singularities then $Y_0$ has rational singularities (\cite[2.77 and 2.88]{kk-singbook}) and then du Bois singularities (\cite{Kovacs}; this also follows from \cite{kk}, see Theorem \ref{theokk}).
By \cite[Thms 1 and 2]{steenbrink} this implies that $R^{i}f_* \mc O_{\mc Y'}$ is locally free (of rank one if $i$ is even, zero otherwise) and satisfies base change, that the Hodge filtration on $H^{i}(Y_0)$ satisfies $\Gr^0_FH^{i}(Y_0)=H^{i}( Y_0, \mc O_{Y_0})$, and that the degeneration $\mc Y \to \Delta$ is cohomologically insignificant (cf. Section \ref{secmono}), i.e. that the specialization map $H^{i}(Y_0) \stackrel{\spe}{\longrightarrow} H^{i}_{\lim}$
induces an isomorphism on the $(p,q)$--pieces with $p\cdot q=0$. Since $Y_0$ has rational singularities, $\pi^*: H^{2}(Y_0) \to H^{2}(\wt{Y}_0)$ is injective (\cite[(12.1.3.2)]{Kollar-Mori}) and hence by \cite[Cor. 8.2.5]{DeligneIII} the MHS on $H^2(Y_0)$ is pure of weight two. In particular, $\Gr^0_FH^{2}(Y_0)=H^{0,2}(Y_0)$ and hence $h^{2,0}(Y_0)=h^{0,2}(Y_0)=1$. Let $\sigma_0 \in H^2(Y_0)$ be a generator of $H^{2,0}(Y_0)=F^2 H^2(Y_0)$. We need to show that $\sigma_0$ defines a holomorphic symplectic form on the smooth locus of $Y_0$, which extends to a holomorphic $2$--form on any resolution $ \pi: \wt {Y}_0 \to  Y_0$. We remarked that the pullback is injective on degree two cohomology, so $\pi^*(\sigma_0)$ defines a non--zero holomorphic $2$--form $\wt \sigma_0$ on $\wt {Y}_0$. To show that it is generically symplectic, it is sufficient to show that $\wt \sigma_0^n \neq 0$. The cup--product is compatible with the specialization map and also with Deligne's MHS (\cite[Cor. 8.2.11]{DeligneIII}), so $\spe( \sigma_0^n )=\spe(\sigma_0)^n $ lies in $F^{2n} H^{2n}_{\lim} \cap W_{2n}=H^{2n,0}_{\lim}$. Since $H^i_{\lim}$ is the $i$--th cohomology of a smooth hyper--K\"ahler manifold, by the result of Verbitsky on the cohomology of hyper-K\"ahler manifolds (Theorem \ref{Verbitsky cohomology})
we know that $\Sym^i H^2_{\lim} \rightarrow H^{2i}_{\lim}$ is injective for $i \le n$. Hence $\spe( \sigma_0^n)$ generates $H^{2n,0}_{\lim}$ and, in particular, $\sigma_0^n$ is non--zero. We are left with showing that the pullback $\wt \sigma_0^n=\pi^*(\sigma_0^n) \neq 0$.
But this follows from the fact that the pullback morphism $\pi^*: H^{i}(Y_0) \to H^{i}(\widetilde Y_0)$ is injective on the weight $i$ part of the MHS (\cite[Cor. 8.2.5]{DeligneIII}). Since $Y_0$ is K-trivial with canonical singularities, the vanishing locus of the holomorphic section $\wt \sigma_0^n$ of the canonical bundle of $\wt Y_0$ is an effective divisor, supported on the exceptional locus of $ \pi: \wt {Y}_0 \to  Y_0$. Hence,  $\wt \sigma_0$ is a holomorphic two form on $\wt Y_0$, which is nondegenerate (i.e. symplectic) at least on an open set containing the locus where $\pi$ is an isomorphism.

Let $\pi: M \to Y_0$ be a $\mathbb Q$--factorial terminalization, i.e., $M$ is $\mathbb Q$--factorial and terminal, and  $\pi$ is a crepant morphism. This always exists by \cite[Cor. 1.4.3]{BCHM}. We use Namikawa's result (\cite[Cor 2]{namikawa2}) to show that $M$ is in fact smooth.
For the readers sake, we recall Namikawa's argument: The Main Theorem in \cite{namikawa2} shows that $\mathbb Q$--factorial symplectic varieties with terminal singularities are locally rigid. Hence, to prove that $M$ is smooth, it is enough to show that a smoothing of $Y_0$ determines a smoothing of $M$. Indeed, since $R^1 \pi_* \mc O_{M}=0$,   any deformation of $M$  induces a deformation of $Y_0$ (\cite[Thm 1.4]{Wahl}, \cite[11.4]{Kollar-Mori}). More specifically, let $\mc M_{\Def} \to \Def(M)$ and $\mc{ Y}_{\Def} \to \Def(Y_0)$ be versal deformation spaces for $M$ and $Y_0$, respectively. By \cite[Thm 1]{namikawa2} there is a finite surjective morphism $\pi_*:\Def(M) \to \Def(Y_0) $ which lifts to a morphism $\Pi_*:  \mc M_{\Def} \to \mc{ Y}_{\Def}$ inducing an isomorphism between the general deformation of $M$ and the general deformation of $Y_0$. Hence $M$ is smooth and moreover, by \cite[Thm 2.2]{namikawa1}, any smoothing of $Y_0$ is obtained as a flat deformation of $M$.
Up to a base change, we can thus lift the morphism $\Delta \to \Def(Y_0)$ associated to the family $\mc Y\to \Delta$ and get a morphism $\Delta \to  \Def(M) $, which we use to pull back the universal family. We thus have two one-parameter deformations $\mc M \to \Delta$ and $\mc Y \to \Delta$, of $M$ and of $Y_0$, respectively, together with a morphism $\mc M \to \mc Y$ over $\Delta$, which induces an isomorphism away from the central fiber and the symplectic resolution $M \to Y_0$ over the origin.
\end{proof}

\begin{coro}\label{coromono2}
Let $\mathcal{X} \to \Delta$ be a projective degeneration with general fiber $X_t$ a smooth hyper-K\"ahler manifold. Assume that one irreducible component $V$ of the central fiber $X_0$ is not uniruled and appears with multiplicity one. Then any minimal model of $V$ has a symplectic resolution (which is a smooth hyper-K\"ahler deformation equivalent to $X_t$) and the monodromy action on the cohomology of a smooth fiber of $f$ is finite. Conversely, if the monodromy of $\mathcal{X} \to \Delta$ is finite, then there exists a smooth family $\mathcal{Y}\to \Delta$ of hyper-K\"ahler manifolds that is isomorphic over $\Delta^*$ to (a finite base change) of $\mathcal{X}^*\to \Delta^*$.
\end{coro}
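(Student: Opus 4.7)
The plan is to combine the Kulikov-type model of Theorem \ref{thmkulikovmodel} with the symplectic-singularities analysis from the alternative proof of Theorem \ref{theo3} and with Namikawa's rigidity of symplectic varieties. First I would apply Theorem \ref{thmkulikovmodel} (together with Remark \ref{general fiber unchanged}) to the projective degeneration $\calX\to\Delta$. Because $V$ is a non-uniruled component of $X_0$ of multiplicity one, after a finite base change this produces a projective family $\mc Y\to\Delta$ which is isomorphic to the pullback of $\calX$ over $\Delta^*$ and whose central fiber $Y_0$ is an irreducible $K$-trivial variety with canonical singularities, birational to $V$. Thus $Y_0$ is already a (not necessarily $\bQ$-factorial) minimal model of $V$ in the MMP sense.

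Next, I would verify that $Y_0$ is a symplectic variety in Beauville's sense, repeating verbatim the Hodge-theoretic argument used in the alternative proof of Theorem \ref{theo3}. The key inputs are: (i) $Y_0$ has rational and, by Koll\'ar--Kov\'acs (Theorem \ref{theokk}), du Bois singularities; (ii) Steenbrink's results (notably Theorem \ref{theosteen}) yield that $H^2(Y_0)$ is pure of weight two with $h^{2,0}(Y_0)=1$, and that the degeneration is cohomologically insignificant, so a generator $\sigma_0\in H^{2,0}(Y_0)$ pulls back to a nonzero holomorphic $2$-form $\wt\sigma_0$ on any resolution $\pi\colon\wt Y_0\to Y_0$; (iii) compatibility of the specialization map with cup products together with Verbitsky's injectivity $\Sym^n H^2_{\lim}\hookrightarrow H^{2n}_{\lim}$ force $\wt\sigma_0^n\neq 0$, so $\sigma_0$ is generically symplectic on the smooth locus of $Y_0$ and extends to every resolution.

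Given that $Y_0$ is symplectic, I would then take a $\bQ$-factorial terminalization $\tau\colon Y'\to Y_0$ (existing by \cite{BCHM}); this is a $\bQ$-factorial terminal symplectic variety. Namikawa's local rigidity of such varieties (\cite{namikawa1, namikawa2}), together with $R^1\tau_*\scrO_{Y'}=0$, ensures that the smoothing of $Y_0$ coming from $\mc Y\to\Delta$ lifts to a smoothing of $Y'$, and rigidity then forces $Y'$ itself to be smooth. Since any two minimal models of $V$ are related by flops, and flops of terminal symplectic varieties preserve smoothness, every minimal model of $V$ admits such a symplectic (in fact crepant smooth) resolution. The resulting smooth filling of the family, combined with Theorem \ref{theo2}, shows that $Y'$ is a hyper-K\"ahler manifold deformation equivalent to $X_t$. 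Finiteness of monodromy on $H^2$ is Theorem \ref{theoradu}, and Corollary \ref{corofinitemonoevrywhere} upgrades this to finiteness on $H^k(X_t)$ for every $k$. The converse direction is precisely the content of Theorem \ref{theo3}, so nothing new is required there.

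I expect the main obstacle to be step (iii) above: controlling $\wt\sigma_0^n$ without a smooth central fiber in hand. This is where the hyper-K\"ahler-specific input (Verbitsky's description of the cohomology subalgebra generated by $H^2$) is indispensable, and it is precisely what fails for a general $K$-trivial degeneration, so it is also the reason the statement is specific to HK manifolds. All remaining steps are applications of theorems already proved in earlier sections.
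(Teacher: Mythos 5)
Your treatment of the first part is essentially the paper's own argument: it reruns the symplectic-singularities proof of Section \ref{secsym} (Kulikov model via Theorem \ref{thmkulikovmodel} and Remark \ref{general fiber unchanged}, purity and symplecticity of $\sigma_0$ via Koll\'ar--Kov\'acs, Steenbrink and Verbitsky, then $\bQ$-factorial terminalization and Namikawa rigidity), and the monodromy statement is Theorem \ref{theoradu} plus Corollary \ref{corofinitemonoevrywhere}. For the ``any minimal model of $V$'' clause your flop argument is serviceable, though the cleaner route recorded in the paper is the Greb--Lehn--Rollenske remark that a symplectic variety with a smooth hyper-K\"ahler birational model admits a symplectic resolution; also note that once $Y'$ sits as the central fiber of a smooth proper family with general fiber $X_t$, deformation equivalence is immediate and Theorem \ref{theo2} need not be invoked.

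The genuine gap is in the converse. You write that it ``is precisely the content of Theorem \ref{theo3}, so nothing new is required,'' but Theorem \ref{theo3} only produces a family \emph{bimeromorphic} over $S$ to the base-changed family, whereas the corollary asserts a smooth family \emph{isomorphic} over $\Delta^*$ to (a base change of) $\calX^*\to\Delta^*$; the remark immediately following the corollary in the paper stresses exactly this strengthening, so deferring to Theorem \ref{theo3} is circular in spirit and insufficient in content. The intended argument is different: from finite monodromy one first passes to a minimal (KSBA/dlt) model using Remark \ref{general fiber unchanged}, which leaves the fibers over $\Delta^*$ genuinely unchanged, then invokes the equivalence $(1)\iff(3)$ of Theorem \ref{theomon1} to conclude that the central fiber of that minimal model is irreducible, not uniruled, and has canonical singularities; one then reruns the symplectic-singularities and Namikawa argument on this model. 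Because the minimal model program and the terminalization only modify the central fiber, the resulting smooth family is isomorphic, not merely bimeromorphic, to the original one over $\Delta^*$. Without the detour through Theorem \ref{theomon1} and the ``general fiber unchanged'' refinement, your proof does not deliver the isomorphism over the punctured disc that the statement claims.
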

\begin{proof}
The first part of the statement follows directly from the above proof. The second statement follows by using Remark \ref{general fiber unchanged}, the equivalence  $(1) \iff (3)$ of Theorem \ref{theomon1}, and then again the arguments of this section.
\end{proof}

\begin{rema}
Notice that the second statement gives a slightly stronger version of Theorem \ref{theo3} (in that it leaves the general fibers unchanged).
\end{rema}

Notice that in the course of the proof above we have shown a special case of the following observation of  Greb--Lehn--Rollenske \cite[Prop. 6.4]{glr} (whose proof also relies on \cite{namikawa2}).

\begin{rema}
If $X_0$  is symplectic variety which is birational to a smooth hyper-K\"ahler manifold, then $X_0$ admits a symplectic resolution.
\end{rema}

%%%%%%%%%%%%%%%%%%%%%%%%%%%%%%%%%%%%%%%%
%%% S5 - Application
\section{Application: Deformation type of hyper-K\"ahler manifolds via degeneration methods}\label{secexamples}
The main tool available for constructing hyper-K\"ahler manifolds is Mukai's method, namely starting with a K3 or  an abelian surface and considering moduli spaces of sheaves on them. This leads to $K3^{[n]}$ type and also after a delicate desingularization process, to the exceptional  OG10 examples (and, similarly, to the generalized Kummer varieties and the exceptional OG6 manifolds
 when starting from an abelian surface). It turns out that there are other geometric constructions leading to hyper-K\"ahler manifolds, most notably starting with a cubic fourfold (\cite{bedo}, \cite{lsv}, \cite{llsvs}). In all these cases, a series of ad hoc geometric arguments were used to establish the deformation equivalence of these new constructions to the Beauville--Mukai examples.
As an application of our results on degenerations of hyper-K\"ahler manifolds, we give in this section a somewhat unified and simplified method to obtain their belonging to a given deformation class. Namely, as investigated by Hassett \cite{hassett}, various codimension $1$ loci (denoted $\calC_d$) in the moduli of cubic fourfolds are Hodge theoretically (and sometimes geometrically) related to $K3$ surfaces. Specializing to these loci often gives a clear link between the hyper-K\"ahler manifolds constructed from cubics and the ones constructed from $K3$ surfaces by Beauville--Mukai or O'Grady constructions.  In fact the easiest specializations of a cubic fourfold  linking cubic fourfolds  to   $K3$ surfaces  are specializations to nodal cubic fourfolds (the divisor $\calC_6$ in Hassett's notation) or  degenerations to the cubic secant to the Veronese surface in $\bP^5$ (which give the divisor $\calC_2$; see  \cite{laza}). In these cases, the associated $K3$ surface is obvious, and after specialization, a birational model of  the associated hyper-K\"ahler manifold is easy to understand. The problem is that even when the  degeneration of the cubic is as mild as possible, the associated  hyper-K\"ahler manifolds (e.g. the Fano variety of lines) specialize to quite singular objects. Our main result Theorem \ref{theo1} tells us that as long as the holomorphic $2$-form survives in the degeneration, we can ignore the singularities of the central fiber in order to compute the deformation type.
In this section, we are thus going to revisit \cite{bedo}, \cite{debarrevoisin}, \cite{lsv}, and \cite{addingtonlehn} in the light of Theorem \ref{theo1}.
\subsection{Fano variety of lines of a cubic fourfold} \label{subsecbedo}
Let $X$ be a smooth cubic fourfold. The variety of lines $F(X)$ is a smooth projective hyper-K\"ahler fourfold by \cite{bedo}. It is deformation equivalent to $S^{[2]}$, where $S$ is
$K3$ surface. More precisely, Beauville and Donagi prove the following:
\begin{theo}\label{theobedo} Let $X$ be a smooth  Pfaffian cubic fourfold and $S$ be the associated $K3$ surface. Then $F(X)$ is isomorphic to $S^{[2]}$.
\end{theo}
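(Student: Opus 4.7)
The plan is to realize the isomorphism through the explicit Pfaffian/Grassmannian duality that underlies both $F(X)$ and $S$. A Pfaffian cubic is defined by $X = \{[v] \in \bP(V) \cong \bP^5 : \mathrm{Pf}(M(v)) = 0\}$ where $M \colon V \to \Lambda^2 W^*$ is a skew-symmetric matrix of linear forms with $\dim W = 6$. The associated $K3$ surface $S$ of degree $14$ arises as a $2$-dimensional linear section of $\Grass(2,W) \subset \bP^{14}$ cut out by the codimension-$6$ subspace dual to the image $M(V) \subset \Lambda^2 W^*$. The key observation is that for $[v] \in X$, the bivector $M(v)$ has a $2$-dimensional kernel $K(v) \subset W$, giving a natural rank-$2$ bundle on $X$ whose fibers sit in $\Grass(2,W)$ near $S$.

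First I would construct a natural map $\varphi \colon S^{[2]} \dashrightarrow F(X)$ on the open locus of reduced subschemes: a pair of distinct points $U_1, U_2 \in S$ determines a secant line $\sigma \subset \bP^{14}$, and via the defining linear system for $S$, a line $\ell \subset \bP(V)$ such that $M(v)$ contains $U_1 + U_2$ in its kernel for all $v \in \ell$; in particular $M(v)$ has rank $\le 4$ and $\mathrm{Pf}(M)$ vanishes on $\ell$, so $\ell \subset X$. Symmetrically, for a line $\ell \subset X$, the kernels $\{K(v) : v \in \ell\}$ generically sweep out a line in $\Grass(2,W)$ meeting $S$ in a length-$2$ subscheme $Z_\ell$; this gives the candidate inverse $\psi \colon F(X) \dashrightarrow S^{[2]}$, manifestly inverse to $\varphi$ on the open loci of definition.

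The remaining work is to show that $\varphi$ and $\psi$ extend to everywhere-regular morphisms and then assemble into an isomorphism. The main obstacle will be controlling non-reduced length-$2$ subschemes of $S$ (requiring a local analysis of the kernel bundle of $M$ restricted to $S$), and special lines $\ell \subset X$ where $M(v)$ drops to rank $\le 2$ at some point (requiring a stratification argument along the rank loci of $M$). Once both maps are shown to be everywhere regular, $\varphi$ is automatically an isomorphism: since both $S^{[2]}$ and $F(X)$ are smooth and $K$-trivial, any $\varphi$-exceptional divisor would appear with positive coefficient in the ramification divisor $K_{S^{[2]}/F(X)}$, contradicting its triviality; by normality, $\psi = \varphi^{-1}$ is then globally defined. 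As a sanity check, $\varphi^* \mc O_{F(X)}(1)$ from the Plücker embedding $F(X) \subset \Grass(2,V)$ pulls back to the expected class on $S^{[2]}$, matching the known lattice-theoretic identification.
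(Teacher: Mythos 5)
The paper does not prove this statement: Theorem \ref{theobedo} is quoted from Beauville--Donagi \cite{bedo} precisely so that the subsequent discussion can \emph{avoid} it (the point of Section \ref{subsecbedo} is to replace the Pfaffian specialization by a nodal one and invoke Theorem \ref{theo1}). So your proposal can only be compared with the original argument of \cite{bedo}, whose strategy it does follow in outline: define the correspondence through the linear algebra of the $6$-dimensional system of skew forms, show it gives a birational morphism, and conclude by $K$-triviality. That last step is sound: a birational morphism between smooth projective varieties with trivial canonical class contracts no divisor, hence is an isomorphism by purity of the exceptional locus.

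However, two steps are wrong as written, and the hard part is deferred. (a) The condition defining $\ell$ from $\{U_1,U_2\}$ cannot be that $U_1+U_2$ lies in the \emph{kernel} (radical) of $M(v)$: for a $4$-dimensional subspace this forces $\operatorname{rk}M(v)\le 2$, and the space of skew forms with prescribed $4$-dimensional radical is only $1$-dimensional in $\bigwedge^2W^*$, so a generic $\bP(V)=\bP^5$ contains no such $v$ at all. The correct condition is that $U_1+U_2$ be \emph{isotropic}, i.e. $M(v)|_{U_1+U_2}=0$; since $M(v)|_{U_i}=0$ is automatic for $U_i\in S$, this is $4$ new linear conditions (the mixed pairings) and cuts out a line $\ell$ along which $\operatorname{rk}M(v)\le 4$, as needed. (b) The inverse map is misdescribed: under $\bigwedge^4W^*\cong\bigwedge^2W$ the Pl\"ucker point of $K(v)=\operatorname{rad}M(v)$ is $M(v)\wedge M(v)$, which is \emph{quadratic} in $v$, so for $v\in\ell$ the kernels trace a conic in $G(2,W)$, not a line, and this curve has no reason to meet the surface $S$ at all. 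The correct recovery uses that any $4$-dimensional isotropic subspace of a rank-$4$ form contains its radical, so $A_\ell:=\sum_{v\in\ell}K(v)$ is the ($4$-dimensional, generically unique) common isotropic subspace, and one sets $Z_\ell:=G(2,A_\ell)\cap S$. Finally, the content of \cite{bedo} lies exactly in what you label ``remaining work'': that $Z_\ell$ has length exactly $2$, that every length-$2$ subscheme of $S$ spans a $4$-dimensional subspace of $W$ (no two planes of $S$ meet, and the tangent-direction analysis at non-reduced subschemes), and the behaviour over the rank-$\le 2$ locus. Without these verifications — which require genericity statements about $S$ and are where the smoothness/degree-$14$ geometry enters — the proposal is a correct plan rather than a proof.
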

Here a Pfaffian cubic fourfold is defined as the intersection of the Pfaffian
cubic in $\mathbb{P}^{14}=\mathbb{P}(\bigwedge^2V_6)$ with a $\mathbb{P}^5=\mathbb{P}(W_6)\subset
\mathbb{P}^{14}$. The associated $K3$ surface
$S$ is defined in the Grassmannian
$G(2,V_6^*)$ by the space $W_6$, seen as a set of Pl\"ucker linear forms on
$G(2,V_6^*)$.

Note that Theorem \ref{theobedo} is used in \cite{bedo} in order to prove that $F(X)$
is a smooth projective hyper-K\"ahler
fourfold for general $X$. However, this last fact can be seen directly by saying that (1) $F(X)$ is smooth as all varieties of lines of smooth cubics are; (2)  $F(X)$ has trivial canonical bundle
as it is the zero set of a transverse section of $S^3\mathcal{E}$ on $G(2,6)$, where
$\mathcal{E}$ is the dual of  tautological rank $2$  vector subbundle on $G(2,6)$, and (3)
$F(X)$ has a holomorphic $2$-form defined as
$I^*\alpha_X$, where $I\subset F(X)\times X$ is the incidence correspondence,
and it can easily be proved to be generically nondegenerate.

Instead of considering the specialization to the Pfaffian case, let us consider the specialization
to the nodal case, where $X$ specializes to $X_0$ with one ordinary
double point at $0\in X_0$.
Let $\pi^X:\mathcal{X}\rightarrow \Delta$ be such a Lefschetz degeneration,
and let $\pi^F:\mathcal{F}\rightarrow \Delta$ be the associated family
of Fano varieties of lines.
 It is well-known
 (see \cite{CG}) that $F(X_0)$ is birational to
 $\Sigma^{[2]}$, where
 $\Sigma $ is the surface of lines in $X_0$ passing through
 $0$. $\Sigma$ is the smooth intersection of a quadric and a cubic in
 $\mathbb{P}^4$, hence a $K3$ surface, and the birational map
 $\Sigma^{[2]}\dashrightarrow F(X_0)$ associates to a pair of lines
 $l,\,l'$ through
 $0$ the residual line of the intersection $P_{l,l'}\cap X_0$ where
 $P_{l,l'}$ is the plane generated by $l$ and $l'$.
Note also that the variety of lines of $X_0$ is  smooth away from the surface $\Sigma$,
hence $F(X_0)$ is a multiplicity $1$ component of the central fiber of the family $\mathcal{F}\rightarrow \Delta$.
 Theorem \ref{theo1} thus applies showing that $F(X_t)$ is deformation equivalent
 to $\Sigma^{[2]}$.
 \begin{rema}{Note that in this example, we can check directly that
 the monodromy acting on $H^2(F(X_t))$ is finite (thus avoiding the use of Theorems
  \ref{thmkulikovmodel} and  \ref{theorr}). Indeed, the monodromy action on $H^4(X_t)$ is finite, being given by a Picard-Lefschetz reflection,
and as the relative incidence correspondence
$\mathcal{I}\subset \mathcal{F}\times_\Delta \mathcal{X}$ induces an isomorphism
$\mathcal{I}^*:R^4\pi^X_*\mathbb{Q}\rightarrow  R^2\pi^F_*\mathbb{Q}$
of local systems over $\Delta^*$,
the monodromy acting on
 $H^2(F(X_t))$ is also finite. The same remark applies in fact to all the cases described in Section
 \ref{secexamples}.
 }
 \end{rema}
\subsection{Debarre-Voisin hyper-K\"ahler fourfolds}
The hyper-K\"ahler fourfolds constructed in \cite{debarrevoisin} are defined as
zero-sets $Y_\sigma$ of general
sections $\sigma\in
\bigwedge^3V_{10}^*$ of the rank $20$ vector bundle $\bigwedge^3\mathcal{E}$ on the Grassmannian $G(6,V_{10})$, where
$\mathcal{E}$ is the dual of the rank $6$ tautological  vector subbundle on $G(6,10)$.
It is proved in \cite{debarrevoisin} that these varieties are deformation equivalent
to $K3^{[2]}$.
Let us now explain how the use of Theorem \ref{theo1} greatly simplifies the proof of this statement.
The choice of $\sigma\in \bigwedge^3V_{10}^*$ also determines a hypersurface (a Pl\"ucker
hyperplane section)
$X_\sigma\subset G(3,V_{10})$. For general $\sigma$, $X_\sigma$ is smooth of dimension
$20$ and
there is an isomorphism
\begin{eqnarray}\label{eqingcoh}
G_\sigma^*: H^{20}(X_\sigma,\mathbb{Q})_{prim}\rightarrow H^2(Y_\sigma,\mathbb{Q})_{prim}
\end{eqnarray}
induced by the incidence correspondence $G_\sigma\subset Y_\sigma\times X_\sigma$,
where the fiber of $G_\sigma$ over a point $[W_6]\in Y_\sigma$ is
the Grassmannian $G(3,W_6)$ which is by definition contained in
$X_\sigma$ (see \cite{debarrevoisin}).
In the paper \cite{debarrevoisin}, the generic nodal  degeneration
$\pi^X:\mathcal{X}\rightarrow \Delta$ of $X_\sigma$ is considered, with the associated
family $\pi^Y:\mathcal{Y}\rightarrow \Delta$ and relative incidence correspondence
$\mathcal{G}\subset \mathcal{Y}\times_\Delta\mathcal{X}$.
We have the following result (see \cite[Theorem 3.3]{debarrevoisin}):
\begin{theo} \label{theodev}The variety $Y_{\sigma_0}$ is reduced and birationally equivalent to
$S^{[2]}$, where $S$ is a $K3$ surface.
\end{theo}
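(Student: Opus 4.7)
The plan is to mirror the argument of Section \ref{subsecbedo} for a nodal cubic fourfold: (i) identify the node of $X_{\sigma_0}$, (ii) extract a $K3$ surface $S$ from the residual data at the node, (iii) build a birational map $\Psi:S^{[2]}\dashrightarrow Y_{\sigma_0}$, and (iv) verify that $Y_{\sigma_0}$ is reduced. Once these are in place, Theorem \ref{theo1} applied to the family $\pi^Y:\mathcal{Y}\to\Delta$ yields that $Y_\sigma$ is deformation equivalent to $S^{[2]}$, i.e.\ of $K3^{[2]}$-type, which is the ultimate geometric point of the theorem.

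For (i), by genericity $X_{\sigma_0}$ has a single ordinary double point at some $[U_3]\in G(3,V_{10})$. Differentiating the Pl\"ucker equation shows that this node condition translates into $\sigma_0(u\wedge u'\wedge v)=0$ for all $u,u'\in U_3$ and $v\in V_{10}$. Consequently $\sigma_0$ descends to a $3$-form $\bar\sigma_0\in\bigwedge^3 V_7^*$ on the quotient $V_7:=V_{10}/U_3$, and the ``mixed'' component of $\sigma_0$ is recorded by a linear map $\eta:U_3\to\bigwedge^2 V_7^*$, a net of skew $2$-forms on $V_7$. For (ii), I would take $S\subset Y_{\sigma_0}$ to be the sublocus of $[W_6]$ containing $U_3$. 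Writing $W_3:=W_6/U_3\subset V_7$, the condition $[W_6]\in Y_{\sigma_0}$ reduces to $W_3$ being isotropic for every $2$-form in the image of $\eta$ (three isotropy conditions, of expected total codimension $9$ in $G(3,V_7)$) together with $\bar\sigma_0|_{\bigwedge^3 W_3}=0$ (one more condition). A direct count gives $\dim S=12-9-1=2$, and one checks that for generic $\sigma_0$ the surface $S$ is smooth with trivial canonical class by a routine adjunction computation on $G(3,V_7)$, identifying $S$ as a $K3$.

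The crucial step (iii) is the construction of $\Psi$. Given a generic pair $\{[W_6^{(1)}],[W_6^{(2)}]\}\in S^{[2]}$ with $W_3^{(1)}\cap W_3^{(2)}=0$, one sets $Z:=W_3^{(1)}+W_3^{(2)}\subset V_7$, which is $6$-dimensional, and defines $\Psi(\{[W_6^{(1)}],[W_6^{(2)}]\}):=[W_6]$, where $W_6\subset V_{10}$ is characterised by $W_6\cap U_3=0$, $\pi(W_6)=Z$ (with $\pi:V_{10}\to V_7$ the quotient), and $[W_6]\in Y_{\sigma_0}$. Expanding $\sigma_0|_{\bigwedge^3 W_6}=0$ relative to a fixed lift of $Z$ into $V_{10}$ turns the last condition into a linear equation in the lifting homomorphism $\phi\in\Hom(Z,U_3)$; the main obstacle of the whole argument is the linear algebra here, namely to verify that for generic $(W_6^{(1)},W_6^{(2)})$ this system has a unique solution and that the resulting $W_6$ lies in $Y_{\sigma_0}$, where solvability and uniqueness are governed by the vanishing conditions imposed by $[W_6^{(i)}]\in S$. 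The birationality of $\Psi$ then follows from the equality $\dim S^{[2]}=4=\dim Y_{\sigma_0}$ together with injectivity on a dense open subset, the inverse being recoverable by intersecting $W_6$ with the family of $6$-planes through $U_3$.

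For (iv), $Y_{\sigma_0}$ is a local complete intersection in $G(6,V_{10})$ cut out by a section of $\bigwedge^3\mathcal{E}$, hence Cohen--Macaulay. Exhibiting any smooth point (e.g.\ a generic $[W_6]\in Y_{\sigma_0}$ with $U_3\not\subset W_6$, at which the Jacobian of the defining section has full rank $20$) shows that $Y_{\sigma_0}$ is generically reduced, and the Cohen--Macaulay property then upgrades this to reducedness. With $Y_{\sigma_0}$ a reduced multiplicity-one component of the central fiber of $\pi^Y:\mathcal{Y}\to\Delta$, birational to the smooth projective hyper-K\"ahler $S^{[2]}$, an application of Theorem \ref{theo1} delivers the desired deformation equivalence of $Y_\sigma$ with $S^{[2]}$.
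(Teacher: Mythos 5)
First, a point of comparison: the paper does not actually prove this statement. Theorem \ref{theodev} is quoted from \cite[Theorem 3.3]{debarrevoisin}, and the only argument reproduced in Section \ref{secexamples} is the construction of the $K3$ surface $S\subset G(3,V_7)$, $V_7=V_{10}/U_3$, as the zero locus of three sections of $\bigwedge^2\mathcal{E}_3$ and one section of $\bigwedge^3\mathcal{E}_3$. Your steps (i) and (ii) reproduce that construction correctly (the node condition killing the $\bigwedge^2U_3^*\otimes V_7^*$ component of $\sigma_0$, the residual data $\eta\in U_3^*\otimes\bigwedge^2V_7^*$ and $\bar\sigma_0\in\bigwedge^3V_7^*$, the codimension and adjunction counts).

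The gap is in step (iii), which is the entire content of the theorem and which you yourself flag as ``the main obstacle.'' Writing $W_6$ as the graph of $\phi\in\Hom(Z,U_3)$ with $Z=W_3^{(1)}\oplus W_3^{(2)}$, the condition $\sigma_0|_{\bigwedge^3W_6}=0$ is indeed affine-linear in $\phi$ (the terms quadratic and cubic in $\phi$ die by the node conditions), but it is a system of $\dim\bigwedge^3Z^*=20$ equations in $\dim\Hom(Z,U_3)=18$ unknowns. To make it square one must use membership in $S$: decomposing $\bigwedge^3Z$ as $\bigwedge^3W_3^{(1)}\oplus(\bigwedge^2W_3^{(1)}\otimes W_3^{(2)})\oplus(W_3^{(1)}\otimes\bigwedge^2W_3^{(2)})\oplus\bigwedge^3W_3^{(2)}$, both the constant term $\bar\sigma_0|_{\bigwedge^3 Z}$ and the linear map $\phi\mapsto\sum\pm\,\eta(\phi(z_i))(z_j,z_k)$ vanish identically on the two extreme summands because $\bar\sigma_0|_{\bigwedge^3W_3^{(i)}}=0$ and $\eta(u)|_{\bigwedge^2W_3^{(i)}}=0$; this leaves an $18\times18$ system whose invertibility for generic pairs still has to be established (by a transversality argument or one explicit example). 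You assert this, you do not prove it, and without it there is no map $\Psi$. The proposed inverse is also not well defined as stated: a general $[W_6]\in Y_{\sigma_0}$ has $W_6\cap U_3=0$, so recovering the pair means locating $S\cap G(3,\pi(W_6))$ inside $G(3,V_7)$, an intersection of negative expected dimension, and one must argue that it consists of exactly two points precisely for the special $6$-planes $Z=\pi(W_6)$ coming from $Y_{\sigma_0}$. Finally, in (iv) the lci/Cohen--Macaulay argument presupposes that $Y_{\sigma_0}$ has pure expected dimension $4$, and the existence of a point where the Jacobian of the section has full rank is asserted rather than checked. In short: the frame is right and consistent with \cite{debarrevoisin}, but the decisive verifications are missing.
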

We are thus in position to apply Theorem \ref{theo1} and this shows that
the smooth fibers  $\mathcal{Y}_{\sigma_t}$ are deformation equivalent
to $K3^{[2]}$.
In the paper \cite{debarrevoisin}, the proof of this fact used a delicate analysis of
the pull-back to $S^{[2]}$ of the Pl\"ucker line bundle, so as to apply a
Proj argument  in the spirit of Huybrechts.
For the sake of completeness, let us recall how  the $K3$ surface
$S$ is constructed
in this case.
Let $X_\sigma$ be singular at $[W]\in G(3,V_{10})$. Then $\sigma_{\mid W}=0$ in $\bigwedge^3W^*$ and furthermore
$\sigma$ vanishes in $\bigwedge^2W^*\otimes (V_{10}/W)^*$. Thus $\sigma$ defines
an element $\sigma_2$ of $W^*\otimes \bigwedge^2(V_{10}/W)^*$. Let $V_7:=V_{10}/W$.
The  surface $S$ is defined as the set of $3$-dimensional subspaces
of $V_7$ whose inverse image in $V_{10}$ belongs to $Y_\sigma$. This is  a $K3$ surface:
Indeed, $\sigma_2$
gives three sections of the bundle $\bigwedge^2\mathcal{E}_3$ on the
Grassmannian $G(3,V_7)$, where as usual $\mathcal{E}_3$ denotes the dual of the tautological subbundle on the Grassmannian $G(3,7)$. On the vanishing locus of these three sections (that we can
also see via the projection $V_{10}\rightarrow V_7$ as embedded in $G(6,V_{10})$), the section $\sigma$ gives  a section of $\bigwedge^3\mathcal{E}_3$.
Hence $S$ is defined in $G(3,V_7)$ by three sections of $\bigwedge^2\mathcal{E}_3$ and one section
of $\bigwedge^3\mathcal{E}_3$. Thus it has trivial canonical bundle and is in fact
the general member of  the complete family  of $K3$ surfaces of genus $12$ described by Mukai
\cite{mukai}.

\subsection{O'Grady 10-dimensional examples and intermediate jacobian fibrations}\label{subseclsv}
This section is devoted to the hyper-K\"ahler manifolds $\mathcal{J}$ constructed
in \cite{lsv} as a $K$-trivial compactification of the intermediate Jacobian
fibration $\mathcal{J}_U\rightarrow U$ associated to the universal family
$\mathcal{Y}_U\rightarrow U$ of smooth hyperplane sections of a general cubic fourfold
$X\subset \mathbb{P}^5$. Here $U\subset (\mathbb{P}^5)^*$ is the Zariski  open set parameterizing smooth
hyperplane sections of $X$.
Our aim is to give a new proof of \cite[Corollary 6.2]{lsv}:
\begin{theo} \label{theoog10} The varieties $\mathcal{J}$ are deformations of O'Grady's 10-dimensional
hyper-K\"ahler
manifolds.
\end{theo}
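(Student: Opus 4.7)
The plan is to carry out the degeneration program suggested by O'Grady--Rapagnetta (described in the Introduction) and conclude with Theorem \ref{theo1}. Specifically, I would work with a one-parameter family $\pi^X:\mathcal{X}\to\Delta$ of cubic fourfolds whose generic fiber $X_t$ is smooth and whose central fiber $X_0$ is the chordal cubic, i.e.\ the secant variety of the Veronese surface $V\subset\bP^5$. This is the mildest degeneration of cubic fourfolds into Hassett's divisor $\mathcal{C}_2$, and it comes equipped with an associated degree-2 $K3$ surface $S$ (obtained as the double cover of $\bP^2$ branched along a sextic, the sextic being encoded in the Veronese surface and the degenerating family). The associated OG10 manifold is $\widetilde{M}_v(S)$, O'Grady's symplectic resolution of the moduli space of semistable sheaves on $S$ with Mukai vector $v=(2,0,-2)$.

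Next I would construct the corresponding relative compactified intermediate Jacobian fibration $\pi^{\mathcal{J}}:\mathcal{J}\to\Delta$, extending the LSV construction over the whole disk $\Delta$. Over $\Delta^*$ the fibers $\mathcal{J}_t$ are the smooth $10$-dimensional hyper-K\"ahler manifolds of \cite{lsv}. The central fiber $\mathcal{J}_0$ will in general be badly singular, but by Theorem \ref{theo1} this is irrelevant: it suffices to exhibit a single reduced (multiplicity one) irreducible component $V\subseteq\mathcal{J}_0$ and a birational identification $V\dashrightarrow\widetilde{M}_v(S)$.

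The main geometric step — and the principal obstacle — is precisely this identification. Generic hyperplane sections $Y_t\subset X_0$ of the chordal cubic are singular cubic threefolds whose Hodge-theoretic intermediate Jacobian degenerates to a (generalized) Prym variety associated to a natural double cover related to $S$. Collecting these Pryms in a family over $U\subset(\bP^5)^*$ and taking the LSV-type compactification should exhibit a distinguished component of $\mathcal{J}_0$ birational to a relative Prym/moduli space which, by Mukai--O'Grady theory, is in turn birational to $\widetilde{M}_v(S)$. One then checks, using that the chordal degeneration is as mild as possible (the Veronese appears as the singular locus of $X_0$, in codimension two in $\bP^5$), that this component $V$ arises with multiplicity one in the flat limit. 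The Hodge-theoretic content — the survival of the holomorphic $2$-form on $V$ — is automatic once $V$ is birational to a smooth hyper-K\"ahler manifold.

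Granted these two ingredients (a multiplicity-one component $V$ of $\mathcal{J}_0$ birational to the OG10 manifold $Z'_0:=\widetilde{M}_v(S)$), Theorem \ref{theo1} applies directly and yields that the smooth fibers $\mathcal{J}_t$ are deformation equivalent to $Z'_0$, proving Theorem \ref{theoog10}. I emphasize that, in contrast to the argument in \cite[\S6]{lsv} which required the delicate Pfaffian specialization together with a proof that the LSV construction behaves well in families of Pfaffian cubics, here the singularities of both $\mathcal{J}$ and $\mathcal{J}_0$ near the chordal locus do not need to be resolved: Theorem \ref{theo1} absorbs all the bad behavior, and only the birational geometry of a single component of the central fiber must be controlled. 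Finiteness of monodromy on $H^2(\mathcal{J}_t)$ — needed in the background of Theorem \ref{theo1} — can moreover be checked directly in this example from the Picard--Lefschetz-type behavior of $H^4(X_t)$ and the relative incidence/Abel--Jacobi correspondence, exactly as in the Fano-variety-of-lines example of Section \ref{subsecbedo}.
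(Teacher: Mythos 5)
Your strategy coincides with the paper's: degenerate the cubic to the chordal cubic (secant variety of the Veronese), exhibit a multiplicity-one component of the central fiber of the associated intermediate Jacobian fibration that is birational to an OG10 manifold built from the associated degree-2 $K3$ surface $S$, and invoke Theorem \ref{theo1}. However, the two steps that carry the actual geometric content are asserted rather than argued, and one of them is justified incorrectly. First, the identification of the distinguished component. The paper, following Collino, identifies the limit of $J(Y_t)$ as $Y_t=H\cap X_t$ degenerates to a hyperplane section of the chordal cubic with the Jacobian $J(C')$ of the genus-five hyperelliptic curve $C'=r^{-1}(C)$, where $C=H\cap V$ is a conic in $\bP^2$; globalizing over $(\bP^5)^*$ produces the relative Jacobian $\mathcal{J}_{\mathcal{C}'}$. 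The birational map $\mathcal{J}_{\mathcal{C}'}\dashrightarrow M_{4,2,0}(S)$, which you defer to ``Mukai--O'Grady theory,'' is precisely Proposition \ref{proograp} and requires a construction: one passes to $\Pic^2$ of the curves $C'\in|2H|$, uses the Lazarsfeld--Mukai bundle of a general degree-2 line bundle to produce a rank-2 bundle with $c_1=0$, $c_2=4$, and then translates back to $\Pic^0$ via the hyperelliptic section. You correctly single this out as ``the principal obstacle,'' but you do not supply it, and your description of the limit as a ``generalized Prym'' is too vague to feed into that construction (it is the Jacobian of the hyperelliptic double cover of $C_0\cong\bP^1$ that one needs concretely).

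Second, your justification of the multiplicity-one statement (``the chordal degeneration is as mild as possible,'' with a codimension count on the Veronese) is not an argument, and it is not how the paper proceeds. The paper's reason (Remark \ref{remamult}) is structural: over the relevant open set of $(\bP^5)^*$ one is dealing with Jacobians, hence genuine abelian varieties with a zero-section rather than torsors, so the limit $J(C')$ is a smooth --- in particular reduced --- fiber of the one-parameter family of intermediate Jacobians; reducedness of the generic fiber of the component over $(\bP^5)^*$ is what gives multiplicity one. Your concluding appeal to Theorem \ref{theo1}, and the remark that finiteness of monodromy can be checked directly via the incidence correspondence with $H^4(X_t)$, do match the paper.
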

The original proof was obtained by specializing $X$ to a general Pfaffian
cubic fourfold $X_{\textrm{Pf}}$. The proof that $\mathcal{J}_{X_{\textrm{Pf}}}$ exists and is smooth does not necessitate   much extra work  but the
proof that it is birational to the O'Grady moduli space $M_{4,2,0}(S)$ (where $S$ is the associated
$K3$ surface of degree $14$ as in Section \ref{subsecbedo}) is rather involved and uses work of Markushevich--Tikhomirov
 \cite{marku} and Kuznetsov
\cite{kuznetsov} on Pfaffian geometry in the threefold case.
We are going to use here a different degeneration which was introduced by Hassett
\cite{hassett}, and plays an important role in \cite{laza}, \cite{looijenga}.
Let $X_0$ be the chordal cubic fourfold which is defined as the secant variety
of the Veronese surface $V\subset \mathbb{P}^5$.
Blowing up the parameter point $[X_0]$ in the space of all cubics, the general point
of the exceptional divisor determines a cubic $X_\infty$, (or rather its restriction to
$X_0$). The restriction of $X_\infty$ to $V$ gives a sextic curve $C\subset\mathbb{P}^2\cong V$, hence a $K3$ surface obtained as the double cover
$r:S\rightarrow \mathbb{P}^2$ of $\mathbb{P}^2$ ramified along $C$. It is proved in
\cite{laza}, \cite{looijenga} that
the period map defined on the regular part of the pencil
$\langle X_0,X_\infty\rangle$ extends over $0$ (in particular the monodromy
on degree $4$ cohomology of the smooth fibers $X_t$ is finite) and the
limit Hodge structure is that of $H^2(S)$.

A hyperplane section $Y_0=H\cap X_0$ of $X_0$ determines by restriction to $V$ a conic
$C=H\cap V$ in $\mathbb{P}^2$ whose inverse image $C'=r^{-1}(C)$ is a hyperelliptic curve of genus
five. The degeneration of a smooth cubic threefold $Y_t=H\cap X_t$ to
a pair $(Y_0,Y_\infty)$, consisting of the Segre cubic threefold (secant variety of a normal quartic curve $\mathbb{P}^1\cong C_0\subset\mathbb{P}^4$) and a cubic hypersurface section $Y_\infty=X_\infty\cap Y_0$ of it, is studied first in \cite{collino}, see also \cite{allcocktoledo}. It is proved there that
the intermediate Jacobian $J(Y_t)$ specializes to
the Jacobian $J(C')$, where $C'$ is the hyperelliptic curve defined as the double cover
of $C_0$ ramified at the $12$ points of $C_0\cap Y_\infty$.
\begin{rema}\label{remamult}{\rm  Note  that
 under a general one-parameter degeneration of a cubic threefold to the Segre cubic threefold,
 the hyperelliptic Jacobian over $0$  is a smooth   (in particular} reduced {\rm ) fiber of the associated  one-parameter family of intermediate Jacobians. This is clear since we are actually working with abelian varieties and not torsors (there is a $0$-section).}
 \end{rema}

 Coming back to the associated $K3$ surface $r:S\rightarrow \mathbb{P}^2$,  ramified along a sextic curve, the Veronese surface $V=v(\mathbb{P}^2)$ is contained in
$\mathbb{P}^5$ and the projective space $(\mathbb{P}^5)^*$ parameterizes the universal
family $\mathcal{C}\rightarrow (\mathbb{P}^5)^*$ of conics in $\mathbb{P}^2$ and
the universal family
$\mathcal{C}'\rightarrow (\mathbb{P}^5)^*$ of hyperelliptic curves $r^{-1}(\mathcal{C}_t)$
on $S$.
It follows from this discussion that
if $\mathcal{X}\rightarrow B$ is a general one-parameter family of cubic fourfolds with central fiber
$X_0$ and fist order deformation determined by a generic $X_\infty$, then the corresponding
family $\mathcal{J}_{\mathcal{X}}$ (which is well defined
over a Zariski open set of $B$ and is a family
of projective hyper-K\"ahler varieties)  has a component of its central fiber which is
birational to the Jacobian fibration $\mathcal{J}_{\mathcal{C}'}$.

The following fact already appears in \cite{ogradyrapagnetta}:
\begin{prop} \label{proograp} Let $r:S\rightarrow \mathbb{P}^2$ be a $K3$ surface as above. Assume ${\rm Pic}\,S=\mathbb{Z}$.
 Then   the Jacobian fibration
$\mathcal{J}_{\mathcal{C}'}\rightarrow (\mathbb{P}^5)^*$ of  the universal family of curves
$\mathcal{C}'\rightarrow (\mathbb{P}^5)^*$ is birational to the O'Grady moduli space
$M_{4,2,0}(S)$ of  rank $2$ vector bundles on $S$, with trivial determinant and $c_2=4$.
\end{prop}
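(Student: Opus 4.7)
The plan is to realize both varieties as moduli spaces of Gieseker semistable sheaves on $S$ whose Mukai vectors lie in the same orbit under derived autoequivalences of $D^b(S)$, and then to deduce birationality via the Fourier--Mukai principle.

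The first step is to interpret the Jacobian fibration via the Beauville--Mukai construction. Writing $f:=r^*\scrO_{\bP^2}(1)$, one has $\mathrm{Pic}(S)=\bZ\cdot f$ with $f^2=2$, and $|2f|$ is precisely the system of preimages of conics; curves $C'\in|2f|$ have self-intersection $8$ and genus $5$. Pushforward along the inclusion $i_{C'}:C'\hookrightarrow S$ of a degree-$d$ line bundle $L$ on a smooth $C'$ yields a pure $1$-dimensional sheaf $(i_{C'})_*L$ on $S$ of Mukai vector $v'=(0,2f,d-4)$. For a suitable polarization (say $H=f$), standard Beauville--Mukai theory identifies the degree-$d$ component of $\mc J_{\mc C'}$ birationally with the Gieseker moduli space $M_H(v')$, the support map $M_H(v')\to|2f|=(\bP^5)^*$ corresponding to the natural projection of the Jacobian fibration. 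Fixing $d=6$ gives $v'=(0,2f,2)$, and different choices of $d$ give birational moduli spaces.

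Next, I would match Mukai vectors. The O'Grady moduli space $M_{4,2,0}(S)$ is $M_H(v)$ for $v=(2,0,-2)$ (i.e.\ rank $2$, $c_1=0$, $v_2=r+c_1^2/2-c_2=-2$), and
$$\langle v,v\rangle=-2\cdot 2\cdot(-2)=8=(2f)^2=\langle v',v'\rangle,$$
so both moduli spaces have expected dimension $v^2+2=10$. An explicit chain of derived autoequivalences of $D^b(S)$ sends $v$ to $v'$: tensoring with $\scrO_S(-f)$ sends $v=(2,0,-2)$ to $v_1:=v\cdot\mathrm{ch}(\scrO_S(-f))=(2,-2f,0)$; the spherical twist $T_{\scrO_S}$, whose action on $\wt{H}(S)$ is $(r,c,s)\mapsto(-s,c,-r)$, sends $v_1$ to $v_2:=(0,-2f,-2)$; and the shift $[1]$ sends $v_2$ to $-v_2=(0,2f,2)=v'$. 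Each step manifestly preserves the Mukai pairing as a cross-check.

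The last and hardest step is to upgrade this chain of operations on Mukai vectors into actual birational maps of moduli spaces. Tensor by $\scrO_S(-f)$ is an isomorphism of moduli spaces and the shift $[1]$ is a trivial identification at the level of isomorphism classes, so the main obstacle is the spherical twist $T_{\scrO_S}$, which is only a derived autoequivalence of $D^b(S)$. I would appeal to standard Fourier--Mukai and Bridgeland-stability arguments (cf.\ the Bayer--Macr\`i and Yoshioka framework, as well as the constructions in \cite{ogradyrapagnetta}): the Gieseker chambers for $v_1$ and $v_2$ can be joined by a path in the Bridgeland stability manifold crossing only "birational" walls, so that $T_{\scrO_S}$ descends to a birational map $M_H(v_1)\dashrightarrow M_H(v_2)$. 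Concretely, for a generic $E\in M_H(v_1)$ the defining triangle $R\Hom(\scrO_S,E)\otimes\scrO_S\to E\to T_{\scrO_S}(E)$, combined with slope-stability bounds on $H^0(S,E)$ and $H^2(S,E)=H^0(S,E^\vee)^\vee$, forces the cohomology sheaves of $T_{\scrO_S}(E)$ to collapse, on a dense open subset, to a single shifted Gieseker-stable sheaf of Mukai vector $v_2$. Composing the three birational maps yields the asserted birational equivalence $M_{4,2,0}(S)\dashrightarrow\mc J_{\mc C'}$.
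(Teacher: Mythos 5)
Your Mukai-vector bookkeeping is correct ($v=(2,0,-2)$ for $M_{4,2,0}(S)$, $v'=(0,2f,d-4)$ for the Jacobian fibration, and the chain $-\otimes\mathcal{O}_S(-f)$, $T_{\mathcal{O}_S}$, $[1]$ does carry $v$ to $(0,2f,2)$), but the proof has a genuine gap at precisely the step you flag as the hardest one. First, the wall-crossing machinery you invoke as "standard" is developed for \emph{primitive} Mukai vectors, whereas here $v=2(1,0,-1)$ and $v'=2(0,f,1)$ are $2$-divisible with $v_0^2=2$: these are the singular O'Grady-type moduli spaces, for which the Bayer--Macr\`i picture (birationality of moduli across walls, descent of autoequivalences to birational maps) is substantially harder and cannot be cited as a black box. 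Second, your concrete fallback does not work as stated: for your $v_1=(2,-2f,0)$ a $\mu$-stable $E$ has slope $-2<0$, hence $H^0(S,E)=0$ and $H^2(S,E)\cong\Hom(E,\mathcal{O}_S)^*\cong H^0(S,E(f))^*\cong\mathbb{C}^2$ (using $E^\vee\cong E$), so $R\Hom(\mathcal{O}_S,E)\cong\mathbb{C}^2[-2]$ and $T_{\mathcal{O}_S}(E)[1]$ has \emph{two} nonvanishing cohomology sheaves, namely $E$ in degree $-1$ and $\mathcal{O}_S^{2}$ in degree $0$; it never collapses to a shifted sheaf on any open set. The collapse you want happens only if you twist \emph{up} by $\mathcal{O}_S(f)$ and use the evaluation map $H^0(S,E(f))\otimes\mathcal{O}_S\to E(f)$, and then the whole content of the proof is to show $h^0(E(f))=2$, $h^1=h^2=0$, that the evaluation map is injective with determinant cutting out a curve $C'\in|2f|$, and that the cokernel is a line bundle of degree $2$ on $C'$.

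That last computation is exactly the paper's proof, which bypasses derived categories entirely: it establishes the cohomological vanishing and generic generation by specializing $E$ to $I_z\oplus I_{z'}$, reads off the short exact sequence $0\to W\otimes\mathcal{O}_S\to E(f)\to L\to 0$ with $L$ a degree-$2$ line bundle on $C'\in|2f|$, and constructs the inverse map explicitly via the Lazarsfeld--Mukai bundle of a general pair $(C',L')$ (then passes from $\mathcal{J}_{\mathcal{C}',2}$ to $\mathcal{J}_{\mathcal{C}'}$ by translating along the hyperelliptic section). So your strategy is really the paper's argument wrapped in Fourier--Mukai language, but the wrapping hides rather than supplies the two essential verifications: the cohomology of $E(f)$ for generic $E$, and the construction of the inverse. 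To repair your write-up, replace the appeal to Bridgeland wall-crossing by the explicit evaluation-sequence analysis (or supply a reference that handles autoequivalences of the non-primitive O'Grady moduli spaces).
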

\begin{proof} Denoting $H=r^*\mathcal{O}(1)\in {\rm Pic}\,S$, the curves
$C'$ belong to the linear system $|2H|$ on $S$. Let $E$ be a general stable  rank $2$ vector bundle on $S$ with $c_2=4$ and $c_1=0$.
One has $\chi(S,E(H))=2$ and  $H^1(S,E(H))=0=H^2(S,E(H))$ as shows specialization to the case of the torsion free
sheaf $I_z\oplus I_{z'}$ where $z$ and $z'$ are two general subschemes of length $2$ on $S$.
Thus $E$ has two sections and is generically generated by them, again by the same
specialization argument.
So we have an injective evaluation map
$W\otimes \mathcal{O}_S\rightarrow E(H)$, and its determinant vanishes along  a
curve $C'\in |2H|$. The cokernel of the evaluation map is then a line bundle $L$ of
degree $2$ on $C'$, as it has $H^0(C,L')=0,\,H^1(C,L')\cong W$.
Conversely, start with a general curve $C'\in |2H|$ and a general line bundle
$L'$ of degree $2$ on $C'$. Then $H^0(C',K_{C'}-L')$ has dimension $2$, and the Lazarsfeld--Mukai bundle associated to the pair $(C',L')$ provides a rank $2$ bundle with the desired Chern classes
on $S$.
Thus we constructed a birational map between $M_{4,2,0}(S)$ and the relative
Picard variety $\mathcal{J}_{\mathcal{C}',2}$ of line bundles of degree $2$ on the family
$\mathcal{C}$ of curves
$C'$, which is in fact birational to $\mathcal{J}_{\mathcal{C}'}$ since the curves
${C}'$ are hyperelliptic. Indeed, the hyperelliptic divisor
gives a section of $\mathcal{J}_{\mathcal{C}',2}$ which provides the isomorphism above by translation.
\end{proof}
 Theorem \ref{theoog10} now follows from Proposition \ref{proograp} and Theorem \ref{theo1}. The only
 thing to check is the fact that under  a general one-parameter degeneration of a cubic fourfold
 to the secant variety $X_0$ to the Veronese surface in $\bP^5$, the hyperelliptic Jacobian fibration $\mathcal{J}_{\mathcal{C}'}$ introduced above appears
 as a multiplicity $1$ component  in the central fiber of the
 associated family of intermediate Jacobian fibrations. As these varieties are fibered over
 a Zariski open set of $(\mathbb{P}^5)^*$, the fact that this component
  has multiplicity $1$ follows  from Remark \ref{remamult}.  The proof is thus complete.

\subsection{LLSvS eightfolds}
The LLSvS eightfolds were constructed in \cite{llsvs}, and were proved in \cite{addingtonlehn} (see also \cite{clehn})
to be deformation equivalent to $S^{[4]}$. These hyper-K\"ahler manifolds are constructed as follows:
Start from a general cubic fourfold $X$ and consider the Hilbert scheme
$\mathcal{H}_3$ of degree $3$ rational curves in $X$. Then
$\mathcal{H}_3$ is birational to a $\mathbb{P}^2$-bundle over a hyper-K\"ahler manifold
$Z(X)$.
The following is proved in \cite{addingtonlehn}:
\begin{theo} If $X\subset\mathbb{P}(\bigwedge^2V_6)$ is Pfaffian, then
$Z(X)$ is birational to $S^{[4]}$, where $S\subset G(2,V_6)$ is the associated $K3$ surface as in Section
\ref{subsecbedo}.
\end{theo}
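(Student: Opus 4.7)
The plan is to identify $Z(X)$ and $S^{[4]}$ as open dense subsets of moduli spaces of Bridgeland-stable objects carrying the same Mukai vector in a common $K3$-type category. For $X$ Pfaffian, Kuznetsov's theorem produces a semiorthogonal decomposition $D^b(X) = \langle \calA_X, \mathcal{O}_X, \mathcal{O}_X(1), \mathcal{O}_X(2) \rangle$ and an equivalence $\calA_X \simeq D^b(S)$ of the residual Kuznetsov component with the derived category of the associated $K3$ surface, and this equivalence respects Mukai lattices. One then hopes that both $Z(X)$ and $S^{[4]}$ appear as moduli of stable objects for a common primitive isotropic Mukai vector transported by this equivalence.

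The proof should proceed in three main steps. First, following the LLSvS construction, attach to each twisted cubic $C \subset X$ the object $F_C \in \calA_X$ obtained by projecting $I_C$ (or a natural resolution of it) into $\calA_X$, verify that $F_C$ has a fixed primitive isotropic Mukai vector $v \in \mathcal{N}(\calA_X)$, and check that the $\mathbb{P}^2$-fibration $\mathcal{H}_3 \to Z(X)$ of \cite{llsvs} identifies $Z(X)$, birationally, with the moduli of Bridgeland-stable objects of class $v$ in $\calA_X$. Second, transport $v$ across the Kuznetsov equivalence; an Euler-characteristic computation should identify its image with $(1,0,-3) \in \mathcal{N}(S)$ (the only primitive isotropic class giving an eightfold, since $v^2+2=8$ forces $n=4$), whose moduli of Gieseker-stable sheaves is $S^{[4]}$. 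Third, composing the two identifications produces the desired birational equivalence $Z(X) \dashrightarrow S^{[4]}$.

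The hard part is the first step: describing $Z(X)$ moduli-theoretically inside $\calA_X$ and verifying Bridgeland stability of the objects $F_C$ requires the stability conditions on Kuznetsov components of Bayer--Lahoz--Macr\`i--Stellari, together with a careful wall-crossing analysis relating $\mathcal{H}_3$ to the resulting moduli; this is essentially the content of \cite{addingtonlehn} (see also \cite{clehn}). A more hands-on alternative in the spirit of this paper would be to exhibit an explicit birational map $S^{[4]} \dashrightarrow Z(X)$ directly, using the Beauville--Donagi correspondence to convert a general length-four subscheme of $S$ into a configuration of four lines on $X$, then extracting from these lines a unique twisted cubic (for instance as the residual curve to the four lines in an appropriate cubic surface section $X \cap \mathbb{P}^3$). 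This route is conceptually appealing but the precise correspondence and the verification of generic injectivity appear to be delicate, so the derived-category approach seems the cleaner one.
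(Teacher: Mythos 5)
The first thing to note is that the paper does not prove this theorem: it is quoted from \cite{addingtonlehn}, and the point of the surrounding subsection is precisely to make it dispensable. The paper's own route to the deformation type of $Z(X)$ bypasses the Pfaffian locus entirely: one degenerates $X$ to the chordal cubic, uses the fact (from \cite{lsv}, via \cite{CG}) that the relative theta divisor of the intermediate Jacobian fibration is birationally a $\mathbb{P}^1$-bundle over $Z(X)$, identifies the specialization of that theta divisor with a $\mathbb{P}^1$-bundle over $S^{[4]}$ (Proposition \ref{proS4}), and then invokes Theorem \ref{theo1}. So your proposal is neither the paper's approach nor an alternative proof contained in the paper; it is an outline of the external result being cited.

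As an outline of the Addington--Lehn argument it points in the right direction, but it has two genuine problems. First, it is circular as written: the step you yourself single out as the hard one --- realizing $Z(X)$ birationally as a moduli space of objects of a fixed class in $\calA_X$ and verifying stability --- is deferred to \cite{addingtonlehn} and \cite{clehn}, i.e. to the very result to be proved. (Historically, Addington--Lehn do not use Bridgeland stability on $\calA_X$ at all; the Bayer--Lahoz--Macr\`i--Stellari stability conditions postdate their paper. They work directly with the objects $F_C$ and show that, under Kuznetsov's equivalence, these go generically to twists of ideal sheaves of length-four subschemes of $S$, which already yields the birational map without a wall-crossing analysis.) Second, there is a concrete numerical error: you call the relevant Mukai vector ``primitive isotropic,'' but an isotropic vector $v$ (i.e.\ $v^2=0$) would produce a two-dimensional moduli space, namely a $K3$ surface. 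The correct statement is $v^2=6$, so that $\dim M=v^2+2=8$; indeed $\langle(1,0,-3),(1,0,-3)\rangle=6$ on the $S$ side, and the class used on the $\calA_X$ side (a combination of $\lambda_1,\lambda_2$ in the $A_2$-lattice) likewise has square $6$. Finally, your ``hands-on alternative'' --- extracting a twisted cubic residual to four lines attached via Beauville--Donagi to a point of $S^{[4]}$ --- does not correspond to any known argument and, as you concede, is too vague to carry weight; the genuinely elementary alternative is the degeneration argument the paper actually carries out.
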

This result, combined with Huybrechts' Theorem \ref{theohuy}, implies:
\begin{coro} The varieties $Z(X)$ are deformation equivalent to $S^{[4]}$.
\end{coro}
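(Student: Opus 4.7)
The proof is a direct formal consequence of the two cited inputs: the Addington--Lehn birationality theorem for Pfaffian $X$, and Huybrechts' original theorem (Theorem \ref{theohuy}). In contrast with the earlier subsections, no passage to a singular central fiber is required, so the full strength of Theorem \ref{theo1} is not needed here.

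First, I would note that for $X$ varying in the Zariski open locus $U$ of cubic fourfolds for which the LLSvS construction produces a smooth projective hyper-K\"ahler eightfold, the varieties $Z(X)$ organize into a smooth projective family $\mathcal{Z}\to U$. Since $U$ is a non-empty open subset of the projective space of cubic fourfolds, it is connected, hence any two fibers $Z(X)$ and $Z(X')$ over $U$ are deformation equivalent via a path inside $U$.

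Next, I would specialize to a general Pfaffian cubic $X_{\mathrm{Pf}}\in U$. By the Addington--Lehn theorem cited immediately above the corollary, $Z(X_{\mathrm{Pf}})$ is birational to $S^{[4]}$, where $S\subset G(2,V_6)$ is the associated degree $14$ $K3$ surface. Both $Z(X_{\mathrm{Pf}})$ and $S^{[4]}$ are smooth projective hyper-K\"ahler manifolds, so Huybrechts' Theorem \ref{theohuy} applies and gives that they are deformation equivalent. Combining this with the first step, every $Z(X)$ with $X\in U$ is deformation equivalent to $S^{[4]}$.

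There is no real obstacle here, given the two cited results; the only mild point to verify is that the Pfaffian locus meets the smooth locus $U$ of the LLSvS construction so that Huybrechts' theorem in its classical smooth-projective form is directly applicable, which is standard. The role of this subsection in the paper is thus chiefly to underline that for the LLSvS eightfolds, the easier smooth Pfaffian specialization suffices, whereas the more delicate Theorem \ref{theo1} becomes necessary only for the degenerations treated in Sections \ref{subsecbedo} and \ref{subseclsv}.
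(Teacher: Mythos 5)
Your proof is correct and coincides with the paper's own derivation of this corollary, which is presented as an immediate consequence of the Addington--Lehn birationality theorem for Pfaffian cubics combined with Huybrechts' Theorem \ref{theohuy}, exactly as you argue. (The remainder of that subsection is devoted to a \emph{second}, alternative proof via Theorem \ref{theo1} and the degeneration to the chordal cubic, but the corollary itself is obtained just as you describe.)
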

Let us now give another proof of this last result, based on Theorem \ref{theo1} and
the degeneration to the chordal cubic.
In \cite{lsv}, it is noticed that
the varieties $\mathcal{J}(X)$ and  $Z(X)$ are related as follows:
\begin{lem}\label{le2juin} The  relative  Theta divisor of
the intermediate Jacobian fibration $\mathcal{J}_U$ of $X$ (which is canonically defined)   is birationally a $\mathbb{P}^1$-bundle over $Z(X)$.
\end{lem}
\begin{proof} Indeed, we know by Clemens--Griffiths \cite{CG} that the Theta divisor in the intermediate Jacobian
of a cubic threefold $Y$ is  parameterized via the Abel-Jacobi map of $Y$
by degree $3$ rational curves on $Y$, the fiber passing through a general
 curve $[C]\in \mathcal{H}_3(Y)$ being the $\mathbb{P}^2$ of deformations of $C$ in
 the unique cubic surface $\langle C\rangle \cap X$ containing $C$.
 It follows from this result  that the
  relative Theta divisor $\Theta\subset \mathcal{J}_U$ parameterizes the data of such a $\mathbb{P}^2_C\subset \mathcal{H}_3(X)$ and of a hyperplane section $Y$ of $X$ containing the cubic surface
 $\langle C\rangle$. This is clearly birationally a $\mathbb{P}^1$-bundle over $Z(X)$.
 \end{proof}
 We now specialize $X$ to the chordal cubic $X_0$, or more precisely
 to a point of the exceptional divisor of the blow-up of this point in
 the space of all cubics, which determines as in the previous section
 a degree $2$ $K3$ surface $r:S\rightarrow \mathbb{P}^2=V$.
 We use the fact already exploited in the previous section that
 the intermediate Jacobian fibration $\mathcal{J}_U$ then specializes
 birationally to the Jacobian fibration $\mathcal{J}_{\mathcal{C}'}$ associated to the family
$\mathcal{C}'$  of hyperelliptic curves $C'=r^{-1}(C)$, $C$ being a conic in $\mathbb{P}^2$.
 The Theta divisor $\Theta\subset \mathcal{J}_U$ specializes to the Theta divisor
 $\Theta_{\mathcal{C}'}$ which is indeed contained in $\mathcal{J}_{\mathcal{C}'}$ since
 the curves $C'$ have a natural degree $4$ divisor (the canonical Theta divisor is naturally contained
 in ${\rm Pic}^4(C')$ for a genus $5$ curve $C'$, so by translation using $H_{\mid C'}$, we get
 it contained in ${\rm Pic}^0(C')$).
 We now have:
\begin{prop} \label{proS4} The divisor $\Theta_{\mathcal{C}'}\subset \mathcal{J}_{\mathcal{C}'}$ is birational to
a $\mathbb{P}^1$-bundle over $S^{[4]}$.
\end{prop}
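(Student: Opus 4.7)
The plan is to factor the birational $\mathbb{P}^1$-bundle structure through the relative symmetric product $\mathcal{C}'^{(4)}\to(\mathbb{P}^5)^*$, using on the one side the Abel--Jacobi description of the theta divisor, and on the other side the obvious map to $S^{[4]}$ that forgets the conic. First I would recall that for a smooth genus $5$ curve $C'$ the Abel--Jacobi map
\[
\alpha:C'^{(4)}\longrightarrow \mathrm{Pic}^4(C'),\qquad (q_1,\dots,q_4)\mapsto \mathcal{O}_{C'}(q_1+\dots+q_4),
\]
is birational onto $\Theta\subset \mathrm{Pic}^4(C')$, since a generic degree $4$ effective divisor $D$ on a non-trigonal (in particular hyperelliptic with $g^1_2$ unique) genus $5$ curve satisfies $h^0(D)=1$. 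Translating by $H_{|C'}$ (a degree $4$ line bundle, as $r:C'\to C$ is $2{:}1$ onto a conic) identifies this $\Theta$ with $\Theta_{\mathcal{C}'}\subset\mathrm{Pic}^0(C')$, exactly as was done in the proof of Proposition \ref{proograp}. Running this construction in family over $(\mathbb{P}^5)^*$ yields a birational map
\[
\alpha:\mathcal{C}'^{(4)}\dashrightarrow \Theta_{\mathcal{C}'}
\]
over $(\mathbb{P}^5)^*$.

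Next I would define the forgetful map
\[
\varphi:\mathcal{C}'^{(4)}\dashrightarrow S^{[4]},\qquad \bigl(C,\{q_1,\dots,q_4\}\bigr)\longmapsto \{q_1,\dots,q_4\}\in S^{[4]},
\]
where $C\in(\mathbb{P}^5)^*$ is a conic and the $q_i$ are points of $r^{-1}(C)\subset S$. This is manifestly well defined on the open locus where the $q_i$ are distinct and supported away from the ramification curve. The key geometric input is the analysis of the fibres of $\varphi$: given a general $\xi=\{q_1,\dots,q_4\}\in S^{[4]}$, the points $r(q_i)\in \mathbb{P}^2$ are four general points, and the linear system of conics of $\mathbb{P}^2$ through four general points is a pencil $\cong \mathbb{P}^1$. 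For each such conic $C$ we have $q_i\in r^{-1}(C)=:C'$, hence a unique lift $\bigl(C,\{q_1,\dots,q_4\}\bigr)\in \mathcal{C}'^{(4)}$ above $\xi$. This exhibits $\varphi$ as a birational $\mathbb{P}^1$-bundle over $S^{[4]}$.

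Finally, combining $\alpha^{-1}$ and $\varphi$ produces a birational map $\Theta_{\mathcal{C}'}\dashrightarrow S^{[4]}$ whose generic fibre is the $\mathbb{P}^1$-pencil of conics through four general points of $\mathbb{P}^2$, as required. The main obstacle to making this rigorous is checking that all four relevant birational constructions (relative Abel--Jacobi, relative symmetric product, translation by $H_{|C'}$, and the pencil of conics through four points) are compatible in families, and in particular that there is a dense open subset $U\subset S^{[4]}$ over which $\varphi$ is a genuine $\mathbb{P}^1$-bundle and $\alpha$ is an isomorphism; this amounts to excluding codimension-$\geq 1$ loci where either the four points $r(q_i)\in\mathbb{P}^2$ fail to be in general position (e.g.\ three of them collinear, which enlarges the linear system of conics) or the divisor $\sum q_i$ lies in a higher-dimensional complete linear series on $C'$ (which for a hyperelliptic genus $5$ curve reduces to a computation with the $g^1_2$ and one can check happens only on a proper subvariety). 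Once these loci are excluded, the birational $\mathbb{P}^1$-bundle structure follows formally from the two descriptions above.
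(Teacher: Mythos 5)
Your proposal is correct and follows essentially the same route as the paper: the paper likewise identifies $\Theta_{\mathcal{C}'}$ with the family of effective degree-$4$ divisors on the hyperelliptic curves $C'$ (via translation by $H_{|C'}$ and the birationality of $C'^{(4)}\to W_4=\Theta$ for a genus-$5$ curve), and then observes that a general length-$4$ subscheme of $S$ lies on a pencil of curves $C'$, giving the $\mathbb{P}^1$-fibers. Your only cosmetic slip is the aside ``non-trigonal (in particular hyperelliptic)'' --- the birationality of the Abel--Jacobi map $C'^{(g-1)}\to\Theta$ holds for every smooth curve and needs no such hypothesis.
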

\begin{proof} Let us identify $\mathcal{J}_{\mathcal{C}'}$ to $\mathcal{J}^4_{\mathcal{C}'}$ via translation by the section $[C']\mapsto H_{\mid C'}$ of $\mathcal{J}^4_{\mathcal{C}'}$. Then $\Theta_{\mathcal{C}'}\subset \mathcal{J}^4_{\mathcal{C}'}$ is the family of effective divisors of degree $4$ in curves $C'\subset S$.
Such an effective divisor determines a subscheme of length $4$ in $S$. This gives a rational map
$\phi:\Theta_{\mathcal{C}'}\dashrightarrow S^{[4]}$.
Given a generic  subscheme $z\subset S$ of length four, $z$ is contained in a pencil of curves $C'\subset S$ and determines an effective divisor of degree $4$ in each of them, showing
that the general  fiber of $\phi$ is a $\mathbb{P}^1$. This shows that, via $\phi$,
$\Theta_{\mathcal{C}'}$ is birationally  a $\mathbb{P}^1$-bundle over $S^{[4]}$.
\end{proof}
As a consequence of Lemma \ref{le2juin} and Proposition \ref{proS4}, we conclude that in the given degeneration, the
central fiber of the family $\mathcal{Z}$ of LLSvS eightfolds has a component which is
birational to $S^{[4]}$, so that (leaving to the reader to check the multiplicity $1$ statement for the considered component of the central fiber),
we can apply Theorem \ref{theo1} and conclude that
$\mathcal{Z}_s$ is deformation equivalent to $S^{[4]}$.

%%%%%%%%%%%%%%%%%%%%%%%%%%%%%%%%%%%%%%%%
%%% S6
\section{The dual complexes  for degenerations of hyper-K\"ahler manifolds}\label{secdual}
While most of the paper is concerned with the case of finite monodromy degenerations, we close here by making some remarks on the infinite monodromy case. We start by recalling the case of $K3$ surfaces. Namely, the Kulikov--Persson--Pinkham Theorem (\cite{kulikov,pp}) states that any projective $1$-parameter degeneration $\calX/\Delta$ of $K3$ surfaces can be arranged   to be semistable with trivial canonical bundle; such a degeneration is  called a {\it Kulikov degeneration} of $K3$s.
For a Kulikov degeneration, one can give a rather precise description of the  possible central fibers $X_0$ of the degeneration (depending on the Type as defined in \ref{defType}).

\begin{theo}[{Kulikov \cite[Theorem II]{kulikov},  Persson \cite{persson}, Roan \cite{roan}}]\label{theokulikov2}
Let $\calX/\Delta$ be a Kulikov degeneration of $K3$ surfaces. Then, depending on the Type of the degeneration (or equivalently, the nilpotency index of $N$) the central fiber $X_0$ of the degeneration is as follows:
\begin{itemize}
\item[i)] {\it Type I:} $X_0$ is a smooth $K3$ surface.
\item[ii)] {\it Type II:} $X_0$ is a chain of surfaces, glued along smooth elliptic curves. The end surfaces are rational surfaces,  and the corresponding double curves are smooth anticanonical divisors. The intermediary surfaces (possibly none) are (birationally) elliptically ruled; the double curves for such surfaces are two distinct sections which sum up to an anticanonical divisor.
\item[iii)] {\it Type III:} $X_0$ is a normal crossing union of rational surfaces such that the associated dual complex is a triangulation of $S^2$. On each irreducible component $V$ of $X_0$, the double curves form a cycle of rational curves giving an anticanonical divisor of $V$.
\end{itemize}
\end{theo}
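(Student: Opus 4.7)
The plan rests on three ingredients: the adjunction formula applied to the semistable central fiber; the Clemens--Schmid exact sequence linking the monodromy logarithm $N$ to the topology of the dual complex $|\Sigma|$; and the Enriques--Kodaira classification of surfaces. Write $X_0=\sum_{i=1}^r V_i$ with each $V_i$ smooth and $X_0$ simple normal crossing, and set $D_i:=\sum_{j\ne i}(V_i\cap V_j)$. The triviality $K_{\calX/\Delta}\sim 0$ together with $X_0\sim 0$ relatively forces, via adjunction,
\[ K_{V_i}\sim -D_i, \]
so every $V_i$ has Kodaira dimension $\le 0$ with effective reduced anticanonical divisor $D_i$. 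A Mayer--Vietoris spectral sequence for the SNC divisor $X_0$, combined with Clemens--Schmid, identifies $\dim|\Sigma|=\nu-1$ and $\dim_\bQ \Gr^W_4 H^2_{\lim}=h^{2,0}(X_t)=1$.

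I would then treat Type I ($\nu=1$, $|\Sigma|$ a point) first: $X_0$ is irreducible with $D_1=0$, so $K_{V_1}\sim 0$, and simple-connectedness of nearby smooth fibers forces $V_1$ to be a smooth $K3$. For Type II ($\nu=2$, $|\Sigma|$ a connected graph), the one-dimensional piece $\Gr^W_3 H^2_{\lim}$ picks up its $(1,0)$-part from $H^{1,0}$ of the double curves, forcing at least one $V_i\cap V_j$ to be a smooth elliptic curve; adjunction on neighbouring components, together with the constraint $\kappa(V_i)\le 0$, propagates ellipticity along the graph, rules out branching, and pins down the chain structure with rational end components and elliptically ruled intermediate ones, giving the stated anticanonical description of the double curves. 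For Type III ($\nu=3$, $\dim|\Sigma|=2$), the effective reduced anticanonical $D_i$ combined with $\kappa(V_i)\le 0$ forces by Enriques--Kodaira that every $V_i$ is rational with $D_i$ a cycle of rational curves; verifying that triple points are exactly trivalent and that each double curve lies on exactly two components, $|\Sigma|$ becomes a closed pseudo-manifold. Simple-connectedness of $|\Sigma|$ follows from a van Kampen argument using the simply connected rational components and the topology of nearby $K3$ fibers; matching $\dim H^2(|\Sigma|,\bQ)$ with $\dim\Gr^W_4 H^2_{\lim}=1$ then identifies $|\Sigma|$ with a triangulation of $S^2$.

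The main obstacle is Type III: proving simultaneously that every component is rational and that $|\Sigma|$ is honestly $S^2$. Rationality requires a careful combined use of adjunction and the classification of non-positive Kodaira dimension surfaces whose anticanonical is a reduced cycle of curves; the identification of $|\Sigma|$ with $S^2$ (not merely a $\bQ$-homology sphere in dimension two) depends on simple-connectedness of $|\Sigma|$, which must be bootstrapped from simply-connectedness of the nearby fiber through the nearby-cycle/Mayer--Vietoris machinery. A secondary technical point is checking, in Type II, that no Kodaira-dimension-zero intermediate components sneak in, which is again a case analysis against the surface classification constrained by the elliptic anticanonical cycle.
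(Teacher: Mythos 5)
First, a point of reference: the paper does not prove this statement at all --- Theorem \ref{theokulikov2} is quoted verbatim from Kulikov \cite[Theorem II]{kulikov} (see also \cite{fm}) and is used as a known input, so there is no in-paper argument to compare yours against. Your outline does follow the standard Kulikov/Persson--Pinkham/Friedman--Morrison strategy (adjunction $K_{V_i}\sim -D_i$ on the semistable $K$-trivial central fiber, Clemens--Schmid, Enriques--Kodaira), so the overall route is the right one.

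There is, however, a genuine logical gap in how you structure the argument. You assert that Mayer--Vietoris plus Clemens--Schmid ``identifies $\dim|\Sigma|=\nu-1$'' and then use the shape of $|\Sigma|$ as the \emph{input} to the case analysis (e.g.\ ``Type I: $|\Sigma|$ a point, so $X_0$ is irreducible''). But Clemens--Schmid only gives $H^k(|\Sigma|,\bQ)\cong W_0H^k(X_0)\cong W_0H^k_{\lim}$ (cf.\ \eqref{eqwt0} in the paper); this controls the cohomology of $|\Sigma|$, not its dimension. A priori a Type I or II Kulikov model could have a two-dimensional dual complex with the cohomology of a point or an interval (e.g.\ a contractible $2$-complex), and nothing in the homological machinery alone rules this out. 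The equality $\dim|\Sigma|=\nu-1$ is an \emph{output} of the classification, not an input. The correct order of argument is: locate the one-dimensional $F^2H^2_{\lim}$ in the weight filtration ($I^{2,0}$, $I^{2,1}$, or $I^{2,2}$ nonzero according as $\nu=1,2,3$), transfer this to $H^2(X_0)$ via the specialization isomorphism, and read off geometric consequences through the Mayer--Vietoris spectral sequence --- e.g.\ in Type I the $(2,0)$-class must come from some component $V_i$ with $h^{2,0}(V_i)\neq 0$, which by $K_{V_i}\sim -D_i$ forces $D_i=0$ and hence, by connectedness of $X_0$, irreducibility; in Type II the class in $I^{2,1}$ must come from $H^1$ of a double curve, forcing an elliptic double curve; in Type III one has $\Gr^W_3H^2_{\lim}=0$ (since $I^{2,2}\neq 0$ kills $I^{2,1}$), which kills $H^1$ of all double curves and components and is what ultimately forces rationality. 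Only after this does one compute $|\Sigma|$ and its dimension. Beyond this rearrangement, your Type III paragraph remains a plan rather than a proof --- you correctly identify that the hard points are rationality of all components and the identification $|\Sigma|\cong S^2$ (which needs $|\Sigma|$ to be a genuine surface, via the triple-point formula and the fact that the anticanonical cycle on each component is a single cycle, not merely a pseudo-manifold with the right Betti numbers) --- but these are exactly the places where the real work of Kulikov's proof lies.
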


\begin{rema}\label{remdualcx} As usual, we let $\Sigma$ be the dual complex associated to the normal crossing variety $X_0$, the central fiber of the Kulikov degeneration.  Then, the topological realization $|\Sigma|$ is either a point, an interval, or $S^2$ according to the Type (I, II, III) of the degeneration. In particular, $\dim|\Sigma|=\nu -1$, where $\nu$ is the nilpotency index of $N$.
\end{rema}
The purpose of this section is to give partial generalizations of Kulikov classification of the central fiber in a degeneration of hyper-K\"ahler manifolds (and make some remarks on the general $K$-trivial case). To start, as already noted, Theorem \ref{theo3} is nothing but a strong generalization of Kulikov's Theorems in the Type I case (see Def. \ref{defType}). Informally, {\it a finite monodromy degeneration of hyper-K\"ahler manifolds admits a smooth filling}. The focus in this section is on the Type II and III cases. Namely, we will discuss some generalization of Remark \ref{remdualcx} to the higher dimensional case and a partial resolution of a conjecture of Nagai \cite{nagai}  concerning the monodromy action on higher cohomology groups.

In contrast to the case of $K3$ surfaces, for higher dimensional hyper-K\"ahler manifolds, the cohomology in higher degree (than $2$) is non-trivial, and thus a natural first question is to what extent the nilpotency index for the monodromy on this higher cohomology is determined by the Type (or equivalently the nilpotency index on $H^2$). This question was investigated by Nagai \cite{nagai} who obtained specific results in the case of degenerations of Hilbert schemes of $K3$s and Kummer case, and made the following natural conjecture:
\begin{conjecture}[{Nagai \cite[Conjecture 5.1]{nagai}}]\label{conjnagai} For a degeneration of hyper-K\"ahler
$$\nilp(N_{2k})=k(\nilp(N_2)-1)+1.$$
(i.e. the nilpotency order on $H^{2k}$ is determined by that on $H^2$).
\end{conjecture}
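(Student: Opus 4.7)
The plan is to establish the equality by separately proving matching upper and lower bounds on $\nilp(N_{2k})$, with Verbitsky's structure theorem \cite{verbitskycoh,bogomolov} for the $H^2$-generated subalgebra of $H^*(X_t,\bQ)$ as the principal tool. Throughout, write $\nu := \nilp(N_2)\in\{1,2,3\}$ and $2n := \dim_{\bC}X_t$, and focus on the range $0\le k\le n$; the complementary range $n\le k\le 2n$ then follows by Poincar\'e duality, via the identity $\nilp(N_{2k})=\nilp(N_{4n-2k})$ coming from the $N$-compatibility of the Poincar\'e pairing, $\langle N\alpha,\beta\rangle+\langle\alpha,N\beta\rangle=0$. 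The starting point is that the log-monodromy $N$ is a derivation of the cup product on $H^*_{\lim}(X_t,\bQ)$ (since $\exp(N)$ is a ring automorphism), so in particular $N$ acts on the image of $\Sym^k H^2_{\lim}\to H^{2k}_{\lim}$ through the canonical derivation extension of $N|_{H^2_{\lim}}$.

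The lower bound $\nilp(N_{2k})\ge k(\nu-1)+1$ is straightforward. Pick $\alpha\in H^2_{\lim}$ with $\beta := N^{\nu-1}\alpha\ne 0$, and iterate the Leibniz rule:
\[
N^m(\alpha^k) \;=\; \sum_{m_1+\cdots+m_k=m}\binom{m}{m_1,\dots,m_k}\,N^{m_1}\alpha\cdots N^{m_k}\alpha.
\]
Since $N^\nu\alpha=0$, only terms with all $m_i\le\nu-1$ survive; for $m=k(\nu-1)$ this forces $m_i=\nu-1$ for every $i$, whence $N^{k(\nu-1)}(\alpha^k)=\binom{k(\nu-1)}{\nu-1,\dots,\nu-1}\,\beta^k$. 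By Verbitsky's theorem, the cup-product map $\Sym^k H^2_{\lim}\hookrightarrow H^{2k}_{\lim}$ is injective for $k\le n$, so $\beta^k\ne 0$ in $H^{2k}_{\lim}$, whence $N^{k(\nu-1)}\ne 0$ on $H^{2k}_{\lim}$.

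For the upper bound $\nilp(N_{2k})\le k(\nu-1)+1$, the same Leibniz calculation immediately yields vanishing of $N^{k(\nu-1)+1}$ on the image of $\Sym^k H^2_{\lim}\to H^{2k}_{\lim}$ (by pigeonhole some $m_i$ would need to exceed $\nu-1$). The substantive task is to extend the vanishing to all of $H^{2k}_{\lim}$. Recast in terms of the monodromy weight filtration, this amounts to showing $\Gr^W_{2k+r}H^{2k}_{\lim}=0$ for every $r>k(\nu-1)$. My plan is (i) to use compatibility of $N$ and cup product with the weight filtration, so that cup-products of $k$ copies of $W_{\nu+1}H^2_{\lim}$ land in $W_{k(\nu+1)}H^{2k}_{\lim}$ and exhaust the top graded piece, and (ii) to identify the top weight pieces via the dual complex of a minimal dlt model. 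For Type III, Theorem \ref{theodualcx} gives $|\Sigma|\simeq_\bQ\bC\bP^n$, and the weight spectral sequence from the strata of $|\Sigma|$ (cf.\ Corollary \ref{spectral sequence}) computes the top weights of $H^{2k}_{\lim}$, identifying them with $H^*(\bC\bP^n,\bQ)$ and in particular giving $\Gr^W_{4k+1}H^{2k}_{\lim}=0$. The Type II case is handled analogously via the $n$-dimensional dual complex, exploiting that the top stratum is itself $K$-trivial in order to control its contribution to the LMHS.

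The main obstacle is step (ii) in the upper bound: ruling out ``exotic'' classes in $H^{2k}_{\lim}$ lying outside the Verbitsky subalgebra but carrying strictly higher weight than cup-products of $H^2_{\lim}$ produce. This is a hyper-K\"ahler-specific phenomenon, and is precisely where the combinatorics of the dual complex from Theorem \ref{theodualcx} must interact nontrivially with Verbitsky's algebra structure. I expect the MUM/Type III case to fall out cleanly from the $\bQ$-homology $\bC\bP^n$ identification, since that identification essentially asserts that the top weight is one-dimensional in each degree and thus automatically realized by cup-products; the Type II case is more delicate and will likely require a hyper-K\"ahler analogue of the structural input used in the Type II case of Kulikov's theorem (Theorem \ref{theokulikov2}), controlling the intermediate stratum of the dual complex.
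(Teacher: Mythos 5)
The statement you are addressing is stated in the paper as a \emph{conjecture}, and the paper itself does not prove it in full: Theorem \ref{theonagai} establishes it only in Types I and III, and in Type II obtains only $\nilp(N_{2k})\in\{k+1,\dots,2k-1\}$. Your lower bound is correct and is essentially the paper's argument (Verbitsky's injectivity of $\Sym^kH^2_{\lim}\hookrightarrow H^{2k}_{\lim}$ for $k\le n$ combined with the Leibniz rule for the derivation $N$), and your Type III case does close, though for a more elementary reason than you suggest: there the claimed value $k(\nu-1)+1=2k+1$ is the maximal possible nilpotency index on a weight-$2k$ limit mixed Hodge structure, so the upper bound is automatic and no dual-complex input is needed beyond the lower bound.

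The genuine gap is exactly the step you flag as ``the main obstacle,'' and it is not closed by your step (ii). In Type II you must show $\Gr^W_jH^{2k}_{\lim}=0$ for $3k<j\le 4k$, but the dual complex computes only $W_0H^{2k}_{\lim}\cong H^{2k}(|\Sigma|)$, equivalently (via $N^{2k}\colon\Gr^W_{4k}\xrightarrow{\ \sim\ }\Gr^W_0$) the single top graded piece $\Gr^W_{4k}$; it says nothing about the intermediate weights $3k+1,\dots,4k-1$, which can only be carried by classes in the complement $B^{2k}$ of the Verbitsky subalgebra. The paper's only handle on $B^{2k}$ is that it has coniveau $\ge 1$ (trivial $(2k,0)$-part), which yields nilpotency index at most $2k-1$ --- and this is precisely why Theorem \ref{theotype2} stops at the window $\{k+1,\dots,2k-1\}$ rather than proving the conjecture in Type II. Separately, your proposal omits Type I, where the conjecture asserts $N_{2k}=0$ outright; this does not follow from any weight or dual-complex argument applied to $B^{2k}$, and in the paper it is supplied by Corollary \ref{corofinitemonoevrywhere}, i.e.\ by the smooth-filling Theorem \ref{theo3} resting on Verbitsky's Torelli theorem and the surjectivity of the period map --- a much deeper and entirely different input than the formal weight combinatorics your plan relies on.
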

\begin{rema}
There is difference of $1$ between our nilpotency index, and that used by Nagai: for us $N$ has index $\nu$ if $\nu$ is minimal such that $N^{\nu}=0$, while in \cite{nagai}, $N$ has index $\nu$ if $N^{\nu+1}=0$ (and $N^\nu\neq 0$).
\end{rema}

The main result of Nagai (\cite[Thm. 2.7, Thm. 3.6]{nagai}) is that the conjecture is true for degenerations arising from Hilbert schemes of $K3$s or generalized Kummers associated to families of abelian surfaces. Below, we check the conjecture in the Type I and III cases
(see Corollary \ref{coronagaiIII}). Furthermore, we get some results on the topological type of the dual complex of the degeneration (see Thm. \ref{theodualcx} and Thm. \ref{theostructureskeleton}).

\begin{theo}\label{theonagai}
Nagai's Conjecture holds in Type I and III cases.  For Type II, it  holds  $\nilp(N_{2k})\in\{k+1,\dots,2k-1\}$ for $k\in\{2,\dots,n-1\}$.
\end{theo}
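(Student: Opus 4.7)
Write $V_m := H^m(X_t,\bQ)_{\lim}$ for the limit mixed Hodge structure and $f^{p,q}(V_m) := \dim I^{p,q}(V_m)$ for its Deligne bigraded numbers. The backbone of the proof is Verbitsky's injection
\begin{equation*}
\Sym^k H^2(X_t,\bQ) \hookrightarrow H^{2k}(X_t,\bQ), \qquad k \le n,
\end{equation*}
which is realized by cup product on the underlying variation of polarized Hodge structure and therefore passes to an injective morphism of LMHS, strict for both $W$ and $F$, hence injective on every $\Gr^W_j$ and every $I^{p,q}$.

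Type I ($\nu = 1$) is immediate: $N_2 = 0$ combined with the monodromy theorem forces $T|_{H^2(X_t)}$ to be of finite order, so Theorem \ref{theo3} produces a smooth proper hyper-K\"ahler filling after base change, and every monodromy operator becomes trivial. Hence $\nilp(N_{2k}) = 1 = k(\nu-1)+1$ for every $k$. For the lower bounds in Types II and III, pick a nonzero class $\alpha \in \Gr^W_{2+(\nu-1)} V_2$ (which exists by the definition of $\nu$); then $\alpha^k$ is nonzero in $\Gr^W_{2k+k(\nu-1)}(\Sym^k V_2)$ and, by $W$-strictness of the Verbitsky injection, has nonzero image in $\Gr^W_{2k+k(\nu-1)} V_{2k}$. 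This yields $\nilp(N_{2k}) \ge k(\nu-1)+1$ for $k \le n$, which combined with the elementary weight bound $\nilp(N_m) \le m+1$ gives the desired equality $\nilp(N_{2k}) = 2k+1$ in Type III (with $k > n$ reduced to $k \le n$ by Poincar\'e duality).

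The heart of the argument is the Type II upper bound $\nilp(N_{2k}) \le 2k-1$ for $2 \le k \le n-1$. Compatibility of the Hodge filtration with specialization gives $\sum_q f^{p,q}(V_m) = h^{p,m-p}(X_t)$; at $p = 2k$, $m = 2k$ this reads $\sum_q f^{2k,q}(V_{2k}) = h^{2k,0}(X_t) = 1$, so a unique $q_0$ has $f^{2k,q_0}(V_{2k}) = 1$ and $I^{2k,q_0}(V_{2k}) = \bC \cdot \sigma^k$. The same identity applied to $V_2$ singles out a unique $q_1$ with $I^{2,q_1}(V_2) \neq 0$, generated by $\sigma_{\lim}$. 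In Type II, $\Gr^W_4 V_2 = 0$ forces $I^{2,2}(V_2) = 0$, so $q_1 \in \{0, 1\}$. Multiplicativity of Deligne's splitting under cup product gives $\sigma_{\lim}^k \in I^{2k,kq_1}(\Sym^k V_2)$, and injectivity of the Verbitsky map on $I^{p,q}$ forces $q_0 = kq_1 \in \{0, k\}$. For $k \ge 2$ we have $q_0 \notin \{2k-1, 2k\}$, so $I^{2k,2k}(V_{2k}) = I^{2k,2k-1}(V_{2k}) = 0$, and by complex conjugation $I^{2k-1,2k}(V_{2k}) = 0$ as well. Hence $\Gr^W_{4k} V_{2k} = \Gr^W_{4k-1} V_{2k} = 0$, i.e., $N^{2k-1} = 0$ on $V_{2k}$, as desired.

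The main obstacle I foresee is the careful verification that the Verbitsky map is a morphism of LMHS strict for $W$ and $F$, and that cup product respects Deligne's bigrading $I^{p,q}$. Both are consequences of functoriality of Steenbrink's nearby-cycle construction and of compatibility of the unipotent reduction with products on VHS; since Deligne's splitting is canonical but not in general functorial for arbitrary maps, the multiplicativity statement in particular requires some care. Once these compatibilities are in place, the rest of the argument is Hodge-theoretic bookkeeping, and the bounds for Types I, III, and II follow as stated.
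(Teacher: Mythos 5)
Your proof is correct. For Types I and III it follows the paper's own route: Type I is exactly Corollary \ref{corofinitemonoevrywhere}, and the Type III lower bound --- a nonzero top-weight class $\alpha\in\Gr^W_4 H^2_{\lim}$ has $\alpha^k\neq 0$ in $\Gr^W_{4k}\Sym^k H^2_{\lim}$, which survives into $H^{2k}_{\lim}$ by strictness of the Verbitsky injection --- is the content of Proposition \ref{lemum}(ii), combined with the trivial upper bound $\nilp(N_m)\le m+1$. Where you genuinely diverge is the Type II upper bound $\nilp(N_{2k})\le 2k-1$. The paper decomposes $R^{2k}f_*\bQ=\im\mu_k\oplus B^{2k}$ as polarized variations of Hodge structure, observes that $B^{2k}$ has vanishing $(2k,0)$-part (since $\mu_{k,t}$ surjects onto $H^{2k,0}$), hence is a Tate twist of an effective weight-$(2k-2)$ variation with quasi-unipotency index at most $2k-1$, and bounds the index on $\Sym^kH^2$ separately by $k+1$. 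You instead work inside the single limit MHS: from $\dim F^{2k}H^{2k}_{\lim}=h^{2k,0}(X_t)=1$ you isolate the unique nonzero piece $I^{2k,q_0}=\bC\cdot\sigma_{\lim}^k$, use multiplicativity of the Deligne bigrading under cup product to force $q_0=kq_1\le k$, and conclude $\Gr^W_{4k}=\Gr^W_{4k-1}=0$, hence $N^{2k-1}=0$. Both arguments rest on the same two inputs (Verbitsky's theorem and one-dimensionality of $H^{2k,0}$); the paper's version needs the orthogonal complement $B^{2k}$ to be a sub-variation (a relative polarization), while yours needs the multiplicativity $I^{p,q}\cdot I^{p',q'}\subset I^{p+p',q+q'}$ for the limit MHS. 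The latter is the point you rightly flag as requiring care, but it is standard and is in fact already invoked by the paper elsewhere (compatibility of cup product with Deligne's MHS, \cite[Cor. 8.2.11]{DeligneIII}, used in Section \ref{secsym}): cup product is a morphism of limit MHS, and the Deligne splitting of a tensor product is the tensor product of the splittings. Your final bookkeeping --- reducing $N^{2k-1}=0$ to the vanishing of $\Gr^W_{4k}$ and $\Gr^W_{4k-1}$, using that only the types $(2k,2k)$, $(2k,2k-1)$, $(2k-1,2k)$ can occur there --- is sound.
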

\begin{rema}  The case of Type I is Corollary \ref{corofinitemonoevrywhere}, a consequence of Theorem \ref{theo3}.
\end{rema}

\subsection{Essential skeleton of a $K$-trivial degeneration}\label{secskeleton} Let $\calX/\Delta$ be a semistable degeneration of algebraic varieties. An important gadget associated to the degeneration is the dual complex $\Sigma$ of the normal crossing variety $X_0$ (the central fiber of the degeneration). The dual complex encodes the combinatorial part of the degeneration and can be used to compute the $0$-weight piece (which reflects the combinatorial part) of the MHS on $X_0$ and of the LMHS. Specifically, an easy consequence of the Clemens--Schmid exact sequence (see \cite{morrison}, \cite{arapura}) gives:
\begin{equation}\label{eqwt0}
H^k(|\Sigma|)\cong W_0 H^k(X_0) \cong W_0H^k_{\lim}.
\end{equation}
The first identification is almost tautological; it follows from the Mayer--Vietoris spectral sequence computing the cohomology of $X_0$. While the second follows from a weight analysis of the Clemens--Schmid sequence, which (in particular) shows that the natural specialization map $H^k(X_0)\to H^k_{\lim}$ has to be an isomorphism for weight $0$. We note that there is a much more general version of the second identity. Namely, as explained in Section \ref{secmono}, as a consequence of \cite{kk} and \cite{steenbrink}, as long as $X_0$ is semi log canonical (e.g. normal crossing), the specialization map $H^k(X_0)\to H^k_{\lim}$ is an isomorphism on the $I^{p,q}$ pieces with $p\cdot q=0$. In particular, we get an isomorphism for the weight $0$ pieces (corresponding to $p=q=0$) of the MHS on $X_0$ and the LMHS.

The semistable models are not unique, and thus the topological space $|\Sigma|$ depends on the model (e.g. $|\Sigma|$ might be a point, but  after a blow-up  might become an interval). In order to obtain a more canonical topological space one needs to require some ``minimality'' for the semistable model.  While many ideas towards an intrinsic definition for $|\Sigma|$ occur in the literature (e.g. Kulikov's results can be regarded as the starting point), the right definitions  were only recently identified by de Fernex--Koll\'ar--Xu  \cite{dkx}. Namely, the minimality corresponds to a relative minimal model in the sense of MMP. This, however leads to singularities for $\calX/\Delta$ and the central fiber $X_0$. It turns out that the right class of singularities that still allow the definition of a meaningful dual complex  is dlt. In other words, the correct context for defining an intrinsic dual complex associated to a degeneration is that of minimal dlt degeneration (see Appendix \ref{dltpairs}). The minimal dlt model $\calX/\Delta$ is not unique, but changing the model has no effect on $|\Sigma|$ (the associated topological spaces will be related by a PL homeomorphism, see \cite[Prop. 11]{dkx}). On the other hand, if $\calX'/\Delta$ is a semistable resolution of $\calX/\Delta$, then the topological realization $|\Sigma|$ associated to the canonical dual complex is a deformation retract of the topological realization of the dual complex associated to the semistable resolution $\calX'/\Delta$, and thus the two topological spaces are homotopy equivalent.
\begin{rema}
Let us note that the semi-log-canonical (slc) singularities are too degenerate to lead to a good notion of dual complex. For instance, it is easy to produce KSBA degenerations of $K3$ surfaces of Type III such that the central fiber $X_0$ is a normal surface with a single cusp singularity (e.g. such examples occur in the GIT analysis for quartic surfaces, see \cite{shah4}). The (naive) dual complex in this situation would be just a point, while from KPP Theorem, the intrinsic dual complex is in fact $S^2$.
\end{rema}

In the case of $K$-trivial degenerations, there is an alternative approach (yet producing the same outcome) coming from mirror symmetry in the Kontsevich--Soibelman interpretation.  This in carefully worked out in Mustata--Nicaise \cite{mn} (via Berkovich analytification). Relevant for us is the fact that associated to a $K$-trivial degeneration $\calX/\Delta$ there is an intrinsic (depending only on $\calX^*/\Delta^*$) topological space, that we call (following \cite{mn}) {\it the essential skeleton}, $\Sk(\calX)$ associated to the degeneration. For a minimal dlt degeneration of $K$-trivial varieties, the essential skeleton $\Sk(\calX)$ can be identified with the topological realization $|\Sigma|$ of the dual complex (cf. Nicaise--Xu \cite[Thm. 3.3.3]{nx}). (As discussed in Section \ref{secmmp} and \cite[Theorem 1.1]{fujino-ss}, a minimal dlt model always exists. Two such models are birationally crepant, leading to $\Sk(\calX)$ being well defined.)  Finally, Nicaise--Xu \cite[Thm. 3.3.3]{nx} show that $\Sk(\calX)$ is a pseudo-manifold with boundary.

The purpose of this section is to make some remarks on the structure of the essential skeleton $\Sk(\calX)$ for a degeneration of hyper-K\"ahler manifolds (depending on the Type of the degeneration). We note that there is an extensive literature on the related case of Calabi-Yau varieties (esp. relevant here is Koll\'ar--Xu \cite{kx}), and that several papers (esp. \cite{mn}, \cite{nx}, \cite{kx}) treat the general $K$-trivial case. However, to our knowledge, none of the existing literature discusses the skeleton $\Sk(\calX)$ in terms of the Type (I, II, III) of the hyper-K\"ahler degeneration.

\begin{rema}
Recently, Gulbrandsen--Halle--Hulek \cite{hulek} (see also \cite{hulek2} and \cite{nagai2}) have studied explicit models for certain types of degenerations of Hilbert schemes of surfaces.
In particular, starting with a Type II degeneration of $K3$ surfaces $\mathcal S/\Delta$,   it is constructed in \cite{hulek2} an explicit  minimal dlt degeneration  for the associated Type II family of Hilbert schemes $\calX/\Delta$ of $n$-points on $K3$ surfaces (with $X_t=(S_t)^{[n]}$).
From our perspective here, most relevant is the fact that the $\Sk(\calX)$ is the $n$-simplex. For comparison, our results (see Theorem \ref{theodualcx}) will only say $\dim \Sk(\calX)=n$ and that $\Sk(\calX)$ has trivial rational cohomology.
\end{rema}

\subsection{Type III is equivalent to the MUM case}  Considering as above a one-parameter degeneration $f: \calX\rightarrow \Delta$, we assume additionally that $f$ is projective. It is then well-known that the monodromy
$\gamma_k$ acting on $H^k(X_t,\mathbb{Q}),\,t\in \Delta^*$ is quasi-unipotent, that is $(\gamma_k^N-Id)^m=0$
 for some integers $N,\,m$. Furthermore
 one can take $m\leq k+1$.
\begin{definition} We will say that the monodromy on $H^k$ is {\it maximally unipotent}
 if the minimal order $m$ is $k+1$.
Let $\calX/\Delta$ be a degeneration of $K$-trivial varieties of dimension $n$. We say that {\it the degeneration is maximally unipotent (or MUM)} if the nilpotency index for the monodromy action on $H^n(X_t,\bQ)$ is $n+1$ (the maximal possible index).
\end{definition}

It is immediate to see that in a MUM degeneration, the skeleton has dimension at least $n$. For $K$-trivial varieties, a strong converse also holds:

\begin{theo}[{Nicaise-Xu \cite{nx}}]
Let $\calX/\Delta$ be a degeneration of $K$-trivial varieties of dimension $n$.
\begin{itemize}
\item[i)] If the degeneration is MUM, then $\Sk(\calX)$ is a pseudo-manifold of dimension $n$.
\item[ii)] Conversely, if $\Sk(\calX)$ is of dimension $n$, the degeneration is MUM.
\end{itemize}
\end{theo}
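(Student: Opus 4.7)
The plan is to reduce to a minimal dlt model and then translate the MUM condition into the statement that the weight-zero part of the limit Hodge structure in degree $n$ is nonzero, which one can read off from the topological dimension of the dual complex. By Theorem \ref{thmkulikovmodel}, after finite base change we may assume $\calX/\Delta$ is a minimal dlt degeneration; by \cite[Thm.~3.3.3]{nx} one then has $\Sk(\calX)=|\Sigma|$ with $\Sigma$ the dual complex of $X_0$. Since $\calX$ has relative dimension $n$, at most $n+1$ components of $X_0$ can meet at a common point, giving the a priori bound $\dim|\Sigma|\le n$.

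The heart of the argument is the chain of isomorphisms
\begin{equation*}
H^n(|\Sigma|,\bQ)\;\cong\;W_0H^n(X_0,\bQ)\;\cong\;W_0H^n_{\lim}.
\end{equation*}
The first is the weight-zero part of the Mayer--Vietoris / Deligne spectral sequence; for snc central fibers it is standard (cf.\ \cite{morrison,arapura}), and the dlt generalization is precisely the content worked out in Appendix \ref{dltpairs} (see Corollary \ref{spectral sequence}). The second follows from the cohomological insignificance of $X_0$: by Koll\'ar--Kov\'acs (Theorem \ref{theokk}) any slc variety is du Bois, and by Steenbrink (Theorem \ref{theosteen}) the specialization map is then an isomorphism on $I^{p,q}$-pieces with $pq=0$, in particular on the $W_0=I^{0,0}$ piece. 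Combined with the monodromy-weight theorem, which provides the isomorphism $N^n:Gr^W_{2n}H^n_{\lim}\stackrel{\sim}{\to}Gr^W_0H^n_{\lim}$ and the fact that $N$ drops weights by $2$, one obtains the equivalence
\begin{equation*}
\text{MUM}\;\Longleftrightarrow\;N^n\ne 0\text{ on }H^n_{\lim}\;\Longleftrightarrow\;W_0H^n_{\lim}\ne 0\;\Longleftrightarrow\;H^n(|\Sigma|,\bQ)\ne 0.
\end{equation*}

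For direction (i), if MUM then $H^n(|\Sigma|,\bQ)\ne 0$, forcing $\dim|\Sigma|\ge n$ and hence $\dim|\Sigma|=n$; the pseudo-manifold property follows from the local model of a dlt pair at a deepest stratum (which is, up to a finite quotient, snc, so locally $|\Sigma|$ is the standard simplex). For direction (ii), if $\dim|\Sigma|=n$ there exist $n+1$ components $V_0,\dots,V_n$ meeting at a point; the induced $n$-simplex is a top-dimensional face of $|\Sigma|$, and since $|\Sigma|$ is a closed pseudo-manifold of dimension $n$ (by \cite[Thm.~3.3.3]{nx}) we get $H^n(|\Sigma|,\bQ)\ne 0$, hence MUM. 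The main obstacle is the identification $W_0H^n(X_0,\bQ)\cong H^n(|\Sigma|,\bQ)$ in the dlt (as opposed to snc) setting, together with checking that the deepest strata of a minimal dlt $K$-trivial degeneration really are log Calabi--Yau points (so that $|\Sigma|$ is a closed pseudo-manifold and not one with boundary); both are handled by the qdlt/adjunction machinery reviewed in the Appendix and in \cite{nx,kx}.
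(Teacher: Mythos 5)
First, a remark on the comparison itself: the paper does not prove this statement --- it is quoted as a theorem of Nicaise--Xu \cite{nx}, so there is no internal proof to measure yours against. Judged on its own terms, and against the closely related arguments the paper does carry out (Theorem \ref{theostructureskeleton}, Proposition \ref{lemum}, Theorem \ref{theotype2}), your direction (i) is essentially sound: the chain $H^n(|\Sigma|,\bQ)\cong W_0H^n(X_0)\cong W_0H^n_{\lim}$ together with $N^n\colon Gr^W_{2n}H^n_{\lim}\stackrel{\sim}{\to}Gr^W_0H^n_{\lim}$ gives exactly the equivalence ``MUM $\Leftrightarrow H^n(|\Sigma|,\bQ)\neq 0$'', which is the mechanism used throughout Section \ref{secdual}. (Your justification of the pseudo-manifold property is too thin, though: being locally snc near a deepest stratum gives neither purity of dimension nor the condition that each $(n-1)$-face lies on at most two $n$-faces; the former needs that all minimal strata are birational, hence all zero-dimensional, and the latter needs adjunction to identify the one-dimensional strata as $(\bP^1,p_1+p_2)$ or elliptic log Calabi--Yau pairs.)

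The genuine gap is in direction (ii). You deduce MUM from $\dim|\Sigma|=n$ via the implication $\dim|\Sigma|=n\Rightarrow H^n(|\Sigma|,\bQ)\neq 0$, justified by declaring $|\Sigma|$ a closed pseudo-manifold by \cite{nx}. This fails twice. A priori \cite{nx} only yields a pseudo-manifold \emph{with boundary}, whose top rational cohomology can vanish (a single $n$-simplex already does); the closedness is itself a conclusion available only once MUM is known, so invoking it here is circular. Moreover, even a closed pseudo-manifold can have vanishing top rational cohomology when it is non-orientable (e.g.\ $\bR\bP^2$), so there is no free fundamental class over $\bQ$. The actual content of (ii) is Hodge-theoretic, not topological: a zero-dimensional stratum is a minimal log canonical center, and one must show that the corresponding class in $E_1^{n,0}=H^0(\mathcal{O}_{X_0^{[n]}})$ of the Mayer--Vietoris spectral sequence survives to $E_\infty^{n,0}=H^n(|\Sigma|,\bC)\subset Gr^0_FH^n(X_0)$, whence $I^{0,0}(H^n_{\lim})\neq 0$ and MUM. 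This survival statement is precisely \cite[Claim 32.1]{kx} (the same step the paper leans on in the proofs of Theorems \ref{theomon1} and \ref{theotype2}), and it is the ingredient your argument replaces with an invalid topological inference.
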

\begin{rema}
We note here that both the minimality and $K$-triviality are essential conditions (see also Rem. \ref{remktirvmin}). Dropping the $K$-triviality, we can consider a family of genus $g\ge 2$ curves degenerating to a compact type curve. Then the monodromy is finite, but the dual graph of the central fiber is an interval. Similarly, one can start with a family of elliptic curves and blow-up a point. This will give a non-minimal family, with trivial monodromy, and dual graph of the central fiber an interval.
\end{rema}

We note that one additional topological constraint on the  skeleton $\Sk(\calX)$ is that it is  simply connected.
\begin{prop}
Let $\calX/\Delta$ be a degeneration such that $\pi_1(X_t)=1$. Then $\pi_1(\Sk(\calX))=1$.
\end{prop}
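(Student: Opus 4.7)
The plan is to construct a continuous surjection $\psi \colon X_t \to \Sk(\calX)$ for small $t \ne 0$, and verify that it is surjective on fundamental groups. Given this, $\pi_1(X_t)=1$ immediately forces $\pi_1(\Sk(\calX))=1$.

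First, I would apply Theorem \ref{thmkulikovmodel} after a finite base change $S \to \Delta$ (which does not affect $\pi_1(X_t)$, as the general fiber is unchanged up to homeomorphism) in order to replace $\calX/\Delta$ by a minimal dlt model, and then pass to a log resolution to assume additionally that $X_0$ is a reduced simple normal crossing divisor in the smooth total space $\calX$. Writing $|\Sigma|$ for the topological realization of the dual complex of $X_0$, this snc model comes with a canonical embedding $\Sk(\calX) \hookrightarrow |\Sigma|$ that is a homotopy equivalence, combining \cite{nx} and \cite{dkx} (in particular \cite[Thm.\ 3.3.3]{nx} for the identification of $\Sk(\calX)$ with the dual complex in the minimal dlt case). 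It thus suffices to prove $\pi_1(|\Sigma|) = 1$.

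For such a semistable family, the continuous Clemens collapsing map $c \colon X_t \to X_0$ of \cite{clemens} is surjective, and over a stratum $X_I^\circ \subset X_0$ it has the local structure of a torus fibration with fiber $T^{|I|-1}$, coming from the local model $x_1\cdots x_k = t$. Combined with the nerve map $n \colon X_0 \to |\Sigma|$ --- defined from any smooth partition of unity $\{\phi_i\}$ on $X_0$ subordinate to $\{X_i\}$ by sending $p$ to the point with barycentric coordinates $(\phi_i(p))$ --- one obtains a continuous surjection $\psi := n \circ c \colon X_t \to |\Sigma|$.

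To check that $\psi_\ast \colon \pi_1(X_t) \to \pi_1(|\Sigma|)$ is surjective, I would cover $|\Sigma|$ by the contractible open stars $U_v$ of its vertices, forming a good cover whose nerve is $|\Sigma|$ itself. The preimages $\psi^{-1}(U_v)$ are the ``Clemens neighborhoods'' of the components $X_v$ of $X_0$ in $X_t$; the local torus-fibration description should show that each $\psi^{-1}(U_v)$ and each pairwise intersection $\psi^{-1}(U_v \cap U_w)$ is path connected. A standard van Kampen argument applied to the cover $\{\psi^{-1}(U_v)\}$ of $X_t$ then yields surjectivity of $\psi_\ast$, completing the argument. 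The hard part is exactly the connectivity verification in this last step, which amounts to a local computation with the Morgan--Clemens model of the snc degeneration; it is expected to be routine but needs careful bookkeeping of the torus fibrations over higher-dimensional strata.
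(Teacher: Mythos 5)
Your overall strategy is sound, and it is worth noting that the paper itself offers no argument here: its ``proof'' is the single citation \cite[\S34 on p.\ 541]{kx}. What you have written is essentially a reconstruction of the argument behind that citation (Clemens collapse $X_t\to X_0$ composed with the nerve map $X_0\to|\Sigma|$, plus the reduction to a semistable model via Theorem \ref{thmkulikovmodel} and the homotopy equivalence $\Sk(\calX)\simeq|\Sigma|$ from \cite{dkx,nx}), so in that sense you are supplying content the paper outsources.

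There is, however, a genuine problem with your final step as stated, on two counts. First, the claim that each $\psi^{-1}(U_v\cap U_w)$ is path connected is false in general: the dual complex is a $\Delta$-complex in which two vertices $v,w$ can be joined by several edges (one for each connected component of $X_v\cap X_w$), and already $U_v\cap U_w=\mathrm{St}(v)\cap\mathrm{St}(w)$ is then disconnected downstairs, hence so is its preimage. Second, and more importantly, van Kampen applied to the cover $\{\psi^{-1}(U_v)\}$ of $X_t$ only tells you that $\pi_1(X_t)$ is generated by the images of the $\pi_1(\psi^{-1}(U_v))$; this is information about the source, and does not by itself yield surjectivity of $\psi_*$ onto $\pi_1(|\Sigma|)$. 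The correct deduction runs in the other direction and needs less: given any loop $\gamma$ in $|\Sigma|$, subdivide it so that each piece lies in some contractible star $U_{v_i}$, choose (by surjectivity of $\psi$) preimages of the subdivision points, and join consecutive preimages by paths inside $\psi^{-1}(U_{v_i})$ --- this uses only that $\psi$ is surjective, that each $\psi^{-1}(U_v)$ is path connected (true: it is a torus fibration over a neighborhood of the irreducible, hence connected, component $X_v$), and that each $U_v$ is simply connected, so that the image of each lifted piece is homotopic rel endpoints to the corresponding piece of $\gamma$. No connectivity of double intersections enters. Alternatively, you can factor as in \cite{kx}: $\pi_1(X_t)\twoheadrightarrow\pi_1(\calX)=\pi_1(X_0)$ (a loop around $X_0$ in $\calX\setminus X_0$ bounds a transverse disc, so dies in $\pi_1(\calX)$), followed by the surjection $\pi_1(X_0)\twoheadrightarrow\pi_1(|\Sigma|)$ for snc varieties. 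With either repair your proof is complete.
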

\begin{proof} \cite[\S34 on p. 541]{kx}.
\end{proof}
Mirror symmetry makes some predictions on the structure of essential skeleton $\Sk(\calX)$ for MUM degenerations. Briefly, the situation is as follows:
\subsubsection{}\label{subssyz}The SYZ conjecture (\cite{syz}) predicts the existence of a special Lagrangian fibration $X/B$ for $K$-trivial varieties near the large complex limit point (the cusp of the moduli corresponding to the MUM degeneration). Furthermore, SYZ predicts that the mirror variety is obtained by dualizing this Lagrangian fibration.
\subsubsection{}Kontsevich--Soibelman \cite{ks2,ks} predict that the base $B$ of the Lagrangian fibration is homeomorphic to the essential skeleton $\Sk(\calX)$. In fact, $B$ is predicted to be the Gromov--Hausdorff limit associated to $(X_t,g_t)$ where $g_t$ is an appropriately scaled Ricci-flat Yau metric on the (polarized) smooth fibers $X_t$. This gives a much richer structure to $B$ ({\it Monge-Ampere manifold}, see \cite[Def. 6]{ks}). The underlying topological space is expected to be $\Sk(\calX)$ (e.g. \cite[\S6.6]{ks}). As already mentioned, the Kontsevich--Soibelman predictions led to the Mustata--Nicaise \cite{mn} definition of $\Sk(\calX)$.

\subsubsection{}The case of $K3$ surfaces is quite well understood (see \cite{ks}, \cite{gw}). In higher dimensions, there is a vast literature on the case of (strict) Calabi-Yau's, most notably the Gross--Siebert program (e.g. \cite{gs}).  From our perspective, we note that the $\Sk(\calX)$ for a MUM degeneration of Calabi-Yau $n$-folds is predicted to be the sphere $S^n$. This is true in dimension $2$ by Kulikov's Theorem, and in dimensions $3$ (unconditional) and $4$ (assuming additionally that the degeneration is normal crossings) by Koll\'ar--Xu \cite{kx}.
\subsubsection{} The SYZ conjecture and the Kontsevich--Soibelman picture for hyper-K\"ahlers is similar to the $K3$ case (see especially Gross--Wilson \cite{gw}). Conjecturally, the special Lagrangian fibration $X/B$ near the large complex limit point can be constructed via a hyper-K\"ahler rotation. Briefly, let $[\Omega]\in H^2(X,\bC)$ and $[\omega]\in H^2(X,\bR)$ be the classes of the holomorphic form and of the polarization (a K\"ahler class) on $X$. The MUM condition implies the existence of a vanishing cycle $\gamma\in H^2(X,\bQ)$ with $q(\gamma)=0$ (where $q$ is the Beauville-Bogomolov form on $H^2$). The problem is that $\gamma$ is not an algebraic class. Recall that, given a hyper-K\"ahler manifold with a fixed K\"ahler class on it,  the space of complex structures on the  hyper-K\"ahler manifold contains a
distinguished $S^2$ (so called twistor family). Using this, one can modify the complex structure on $X$ (call the resulting complex manifold $X'$) such that $\gamma$ is an algebraic class with $q(\gamma)=0$ (essentially, after an appropriate $\bC^*$-scaling of $\Omega$, we can arrange $\Omega'=\textrm{Im}(\Omega)+i\omega$ and $\omega'=\textrm{Re}(\Omega)$ to be the holomorphic and respectively K\"ahler classes on $X'$, and  $\gamma$ to be orthogonal to $\Omega'$). The so-called hyper-K\"ahler SYZ conjecture (which is known in various cases) then predicts  that  (a multiple of) $\gamma$ is  the class of a (holomorphic) Lagrangian fibration $X'/B$. Of course, in the $C^\infty$ category, $X'/B$ is the same as the desired special Lagrangian $X/B$. (From a slightly different perspective, mirror symmetry for hyper-K\"ahler manifolds was studied by Verbitsky \cite{vermirror}.)
\subsubsection{}\label{subshwang} Finally, the basis of an (algebraic) Lagrangian fibration $X'/B$ is expected to be $\bC\bP^n$ (for $2n$-dimensional hyper-K\"ahler manifolds). For instance, if $B$ is smooth, then $B\cong \bC\bP^n$ by a theorem of Hwang \cite{hwang}.

\subsubsection{} To conclude, mirror symmetry (via SYZ conjecture and Kontsevich--Soibelman) predicts that {\it the essential skeleton $\Sk(\calX)$ for a MUM degeneration is $S^n$ and respectively $\bC\bP^n$ for Calabi-Yau's and respectively hyper-K\"ahler's}. The following result is a weaker version of this statement, saying that it holds in a cohomological sense.

If $X$ is a simply connected compact K\"ahler manifold with trivial
canonical bundle, the Beauville-Bogomolov decomposition theorem
\cite{beau} says that
$X\cong \prod_i X_i$ where the $X_i$ are either Calabi-Yau of dimension $k_i$ (that is with
$\SU(k_i)$ holonomy group), or irreducible hyper-K\"ahler of dimension
$2l_j$ (that is with
$\Sp(2l_j)$ holonomy group). The type of the decomposition will be  the collection of the dimensions
$k_i$, $2l_j$ (with their multiplicities).

\begin{theo}\label{theostructureskeleton}
Let $\calX/\Delta$ be a minimal dlt degeneration of $K$-trivial varieties. Assume that the general fiber $X_t$ is a simply connected $K$-trivial variety.

Assume that the degeneration is maximal unipotent. Then
\begin{itemize}
\item[i)] $H^*(\Sk(\calX),\bQ)\cong \prod_i H^*(S^{k_i},\bQ)\times \prod_j H^*(\bC\bP^{l_j},\bQ))$, where $k_i$ represent the dimensions of the Calabi-Yau factors and $2l_j$ the dimensions of the hyper-K\"ahler factors in the Beauville-Bogomolov decomposition of the general fiber $X_t$.
\item[ii)] Conversely, the cohomology algebra of the skeleton $\Sk(\calX)$ determines the  type  of the Beauville-Bogomolov decomposition of $X_t$.
\end{itemize}
\end{theo}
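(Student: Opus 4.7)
The plan is to compute $H^*(\Sk(\calX),\bQ)$ via the canonical identification $H^k(\Sk(\calX),\bQ)\cong W_0 H^k_{\lim}$ recalled in \eqref{eqwt0} (valid for a minimal dlt degeneration by slc $\Rightarrow$ du Bois $\Rightarrow$ cohomologically insignificant). The computation then reduces to a K\"unneth/weight-filtration calculation on the limit mixed Hodge structure of $X_t$, combined with the factor-by-factor identifications provided by Theorem \ref{theodualcx} and its strict Calabi-Yau analogue.

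First, apply the Beauville-Bogomolov decomposition to write $X_t\cong \prod_i X_i\times \prod_j Y_j$, with $X_i$ strict Calabi-Yau of dimension $k_i$ and $Y_j$ irreducible hyper-K\"ahler of dimension $2l_j$. The K\"unneth isomorphism $H^*_{\lim}(X_t)\cong \bigotimes_i H^*_{\lim}(X_i)\otimes \bigotimes_j H^*_{\lim}(Y_j)$ is an isomorphism of limit mixed Hodge structures, and the log-monodromy decomposes as $N=\sum N_i+\sum N'_j$ (possibly after a finite base change $t\mapsto t^m$ to ensure that the monodromy preserves the BB decomposition rather than permuting isomorphic factors; such a base change does not change the weight filtration, hence does not change $W_0$). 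An elementary analysis of $N$ on the K\"unneth pieces of middle cohomology, using the bound $\nu^a\le \min(a,2d-a)+1$ on the nilpotency index in degree $a$ on a $d$-dimensional smooth projective variety, shows that the only K\"unneth piece which can realize nilpotency $\dim X_t+1$ on $H^{\dim X_t}_{\lim}(X_t)$ is the one where each factor sits in middle degree, and that this happens if and only if each factor is itself MUM. Thus the MUM hypothesis on $\calX$ is equivalent to MUM on every Calabi-Yau factor and Type III on every hyper-K\"ahler factor.

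Next, on each factor we identify $W_0$ of the limit cohomology. For a MUM Calabi-Yau factor of dimension $k_i$, the essential skeleton is a simply-connected rational homology $S^{k_i}$ (cf.\ \cite{kx} and the introductory discussion), so $W_0 H^*_{\lim}(X_i)\cong H^*(S^{k_i},\bQ)$. For a Type III hyper-K\"ahler factor of dimension $2l_j$, Theorem \ref{theodualcx} yields $W_0 H^*_{\lim}(Y_j)\cong H^*(\bC\bP^{l_j},\bQ)$ as a graded $\bQ$-algebra. Combining these via the standard tensor-product compatibility $W_0(A\otimes B)=W_0(A)\otimes W_0(B)$ (which holds because LMHS weights coming from degenerations of smooth projective varieties are nonnegative) gives
\[
H^*(\Sk(\calX),\bQ)\,\cong\,\bigotimes_i H^*(S^{k_i},\bQ)\,\otimes\,\bigotimes_j H^*(\bC\bP^{l_j},\bQ),
\]
which is the K\"unneth ring of $\prod_i S^{k_i}\times \prod_j \bC\bP^{l_j}$; this is assertion (i). For (ii), each $\bC\bP^l$ factor is detected as a truncated-polynomial subalgebra generated by a degree-two element of nilpotence length $l+1$, while each $S^k$ factor with $k\ne 2$ contributes a square-zero primitive generator in degree $k$. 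Unique tensor factorization of the resulting graded-commutative $\bQ$-algebra then recovers the multisets $\{k_i\}$ and $\{l_j\}$.

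The main obstacle is step three: the Beauville-Bogomolov factors $X_i$, $Y_j$ need not arise as general fibers of actual projective degenerations over $\Delta$, so in order to invoke Theorem \ref{theodualcx} directly one must either spread out the BB decomposition to a family of products in a neighborhood of $0$ (possible up to finite base change and birational modification) and compactify each factor separately, or formulate the conclusion of Theorem \ref{theodualcx} purely at the level of LMHS. The latter is in fact what the proof of Theorem \ref{theodualcx} delivers, since it proceeds by combining the weight computation $H^*(\Sk)\cong W_0 H^*_{\lim}$ with Verbitsky's theorem on the cohomology subalgebra generated by $H^2$. Either route removes this obstruction, and the rest of the proof is formal.
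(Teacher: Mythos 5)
Your first step (the identification $H^k(\Sk(\calX),\bQ)\cong W_0H^k(X_0)\cong W_0H^k_{\lim}$, compatibly with cup product) coincides with the paper's, and your nilpotency bookkeeping showing that maximal unipotence for $\calX$ forces each Beauville--Bogomolov factor to be maximally unipotent in its middle degree is correct. But the core of your argument has a circularity: you compute $W_0H^*_{\lim}$ factor by factor by invoking Theorem \ref{theodualcx}(ii) for the hyper-K\"ahler factors, whereas in the paper the Type III statement of Theorem \ref{theodualcx}(ii) (rational homology $\bC\bP^n$) is itself \emph{deduced} from Theorem \ref{theostructureskeleton} applied to an irreducible hyper-K\"ahler fiber. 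You acknowledge this in your last paragraph and propose to ``formulate the conclusion at the level of LMHS,'' but that reformulation is precisely the missing proof: one must show directly that on each factor $W_0H^k_{\lim}$ is at most one-dimensional, is nonzero exactly when the monodromy is maximally unipotent in degree $k$, and that its generators multiply correctly. The paper does this globally, without ever splitting the family: $N^k:Gr^W_{2k}H^k_{\lim}\xrightarrow{\ \sim\ }W_0H^k_{\lim}$ together with $Gr^W_{2k}H^k_{\lim}\subset F^kH^k_{\lim}\cong H^{k,0}(X_t)$ (and the cup-product compatibility of Lemma \ref{lemmacup}) identifies $H^*(\Sk(\calX),\bC)$ with the algebra of holomorphic forms $\oplus_kH^0(X_t,\Omega^k_{X_t})$, after which both (i) and (ii) reduce to the purely algebraic Lemma \ref{lecohalg} about that algebra for a product of Calabi--Yau and hyper-K\"ahler factors.

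Two further points. First, your route needs the local system $R^*f_*\bQ$ over $\Delta^*$ to split, after finite base change, as a tensor product according to the Beauville--Bogomolov decomposition of the fibers (deformation-stability of the decomposition, monodromy permuting isomorphic factors, etc.). This is believable for simply connected fibers but is a nontrivial input that you assert rather than prove, and the paper's argument never requires it: the decomposition is used only on a single smooth fiber $X_t$ to describe its holomorphic-form algebra. Second, the appeal to ``unique tensor factorization'' of graded-commutative algebras in (ii) is too quick as a general principle; Lemma \ref{lecohalg} replaces it with an explicit algorithm (strip off the hyper-K\"ahler generators by examining $\{u\in A^2: u^{k+1}=0\}$ for increasing $k$, then read off the Calabi--Yau generators from the degrees of the remaining generators), and it is this concrete verification for the specific class of algebras at hand that must be supplied.
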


\begin{proof}[Proof of Theorem \ref{theostructureskeleton}]
Let $\calX/\Delta$ be a minimal dlt degeneration. By the du Bois arguments of Section \ref{secmono}, the weight $0$ pieces of the limit mixed Hodge structure on $H^*_{\lim}$ are identified with the weight $0$ pieces of the mixed Hodge structure on $H^*(X_0)$. Next, a Mayer--Vietoris argument identifies $W_0H^k(X_0)$ with $H^k(\Sk(\calX))$ (recall $\Sk(\calX)$ is nothing but the topological realization of the dual complex in this situation). In other words, we see that \eqref{eqwt0} holds in the situation of minimal dlt degenerations. Summing over all degrees $k$ gives an algebra structure, and then an identification of  the algebra associated to the weight $0$ piece of the LMHS with the cohomology algebra of $\Sk(\calX)$. Here it is important to
note that this identification is not only as vector spaces, but rather as algebras (i.e. compatible with the cup product) -- this is discussed in  Lemma \ref{lemmacup} below.

It remains to understand the algebra structure for the weight $0$ piece of the LMHS (under the MUM assumption). It is immediate to see that on $H^k_{\lim}$ the weight $0$ piece is non-zero if and only if the monodromy action on $H^k(X_t)$ is maximally unipotent. When this is satisfied, we have $N^k:\Gr_{2k}^W H^k_{\lim}\cong W_0H^k_{\lim}$, and then $\Gr_{2k}^W H^k_{\lim}\subset F^k H^k_{\lim}\cong H^{k,0} (X_t)$ (as vector spaces). Thus, the weight $0$ piece can be identified with a subspace in the space of degree $k$ holomorphic forms on $X_t$. The following proposition
tells us that under the MUM assumption on the top degree cohomology,
the weight $0$ piece can be identified with  the whole space of degree $k$ holomorphic forms on $X_t$.
\begin{prop} \label{lemum} Let $\calX/\Delta$ be a projective degeneration of $K$-trivial varieties.
 \begin{itemize}
\item[i)] Assume the fibers $X_t$ are simply connected  Calabi-Yau manifolds (so $h^i(X_t,\mathcal{O}_{X_t})=0$ for $0<i<n={\rm dim}\,X_t$). Then
 the only degree in which the monodromy can be maximally unipotent is $n$.
\item[ii)] Assume the fibers $X_t$ are hyper-K\"ahler manifolds  (so $h^i(X_t,\mathcal{O}_{X_t})=0$ for $i$ odd and $\mathbb{C}$ for $i=2j$, $0<i<2n={\rm dim}\,X_t$). Then the only degrees where the monodromy
 can be maximally unipotent are the even degrees $2i$ and
  the monodromy is  maximally unipotent in some degree $k=2i$ if and only if it is maximally unipotent in all degrees $2i\leq 2n$. In particular, MUM degeneration is equivalent to Type III degeneration (for hyper-K\"ahler manifolds).
  \end{itemize}
   \end{prop}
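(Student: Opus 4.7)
The plan is to unify both parts through one Hodge-theoretic observation: MUM in degree $k$ is equivalent to $N^k \neq 0$ on $H^k_{\lim}$, hence to $Gr^W_{2k} H^k_{\lim} \neq 0$. Since $h^{p,q}_{\lim} = 0$ for $p > k$ or $q > k$, the only Deligne piece of total weight $2k$ that can contribute is $I^{k,k}$. Using $I^{k,k} \subset F^k H^k_{\lim}$ together with the deformation-invariance $\dim F^k H^k_{\lim} = h^{k,0}(X_t)$, this yields the key bound $h^{k,k}_{\lim} \leq h^{k,0}(X_t)$. For simply connected Calabi--Yau, $h^{k,0}(X_t)$ vanishes for $0 < k < n$, which proves (i); for hyper-K\"ahler, $h^{k,0}(X_t)$ vanishes for $k$ odd, confining MUM in (ii) to even degrees.

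To complete (ii), I will show that MUM in some degree $2i$ with $1 \leq i \leq n$ is equivalent to Type III; the case $i > n$ will reduce to this range via Poincar\'e duality, which preserves the Jordan structure of the monodromy operator. Let $\sigma$ generate the one-dimensional space $F^2 H^2_{\lim}$ (i.e.\ the limit of a holomorphic symplectic form). By Verbitsky's injectivity $\Sym^i H^2(X_t) \hookrightarrow H^{2i}(X_t)$ for $i \leq n$, the class $\sigma^i$ is nonzero and spans $F^{2i} H^{2i}_{\lim}$.

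For MUM in degree $2i \Rightarrow$ Type III: the MUM condition forces $F^{2i} H^{2i}_{\lim} = I^{2i,2i}_{\lim}$, so $\sigma^i \notin W_{4i-1} H^{2i}_{\lim}$. If the degeneration were of Type I or II, the top weight on $H^2_{\lim}$ would be at most $3$, giving $\sigma \in W_3 H^2_{\lim}$ and, by $W_\bullet$-compatibility of cup product, $\sigma^i \in W_{3i} H^{2i}_{\lim} \subset W_{4i-1} H^{2i}_{\lim}$ (as $i \geq 1$), a contradiction. For the converse, Type III forces $\sigma \in I^{2,2}_{\lim}$: the Hodge bound gives $F^2 H^2_{\lim} = I^{2,0} \oplus I^{2,1} \oplus I^{2,2}$, and in Type III the piece $I^{2,2}$ is nonzero (being $Gr^W_4 H^2_{\lim}$), so it must fill the one-dimensional space $F^2 H^2_{\lim}$. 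Then $N^2 \sigma$ is nonzero via the standard isomorphism $N^2: Gr^W_4 \stackrel{\sim}{\to} Gr^W_0$. Since topological monodromy is a ring automorphism, $N = \log T_u$ is a derivation of the cup-product algebra; using $N^3 = 0$ on $H^2_{\lim}$, the only surviving multinomial term is
\[
N^{2i}(\sigma^i) = \frac{(2i)!}{2^i}\,(N^2 \sigma)^i,
\]
which is nonzero by Verbitsky applied once more to $N^2 \sigma \in H^2_{\lim}$.

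The hardest step is pinning down the Hodge type of $\sigma$ itself from that of $\sigma^i$. I expect to bypass any direct appeal to multiplicativity of the Deligne splitting (which is delicate in the mixed case) by arguing purely at the level of the weight filtration, which is strictly multiplicative under cup product, and invoking Verbitsky's injectivity as the rigidity input on the smooth fiber.
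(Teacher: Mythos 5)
Your argument is correct, and it reaches the same conclusion from the same essential inputs (Verbitsky's injectivity $\Sym^i H^2\hookrightarrow H^{2i}$, the one-dimensionality of $h^{2i,0}$, and level/coniveau considerations), but the packaging is genuinely different from the paper's. For part (i) the paper argues at the level of the variation of Hodge structure: coniveau $\geq 1$ makes $R^if_*\bQ$ a Tate twist of an effective polarized VHS of weight $i-2$, bounding the quasi-unipotency index by $i-1$; you instead read off the same vanishing from the limit MHS via $I^{k,k}\subset F^kH^k_{\lim}$ and $\dim F^kH^k_{\lim}=h^{k,0}(X_t)$. For part (ii) the paper splits $R^{2i}f_*\bQ=\operatorname{Im}\mu_i\oplus B^{2i}$ orthogonally, observes that $B^{2i}$ has trivial $(2i,0)$-part and hence cannot be MUM, and reduces everything to the ``easy exercise'' that MUM on $\Sym^iH^2$ is equivalent to MUM on $H^2$. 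You avoid the orthogonal complement entirely: since $F^{2i}H^{2i}_{\lim}$ is one-dimensional and spanned by $\sigma^i$, the top weight piece $I^{2i,2i}$, if nonzero, is forced to equal $\bC\sigma^i$, and the two implications then follow from strict multiplicativity of the weight filtration (one direction) and from the derivation identity $N^{2i}(\sigma^i)=\tfrac{(2i)!}{2^i}(N^2\sigma)^i$ plus Verbitsky applied to $N^2\sigma$ (the other); your computation is in effect an explicit proof of the paper's ``easy exercise.'' What the paper's route buys is a uniform statement about sub-VHS that handles both directions at once; what yours buys is that everything happens inside a single limit MHS with a distinguished generator, and the only multiplicativity you need is that of $W_\bullet$ and $F^\bullet$ under cup product, which is standard for limit MHS (and is the compatibility the paper itself records in Lemma \ref{lemmacup}). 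The one point worth making explicit is that $N$ is a derivation because the semisimple and unipotent factors of a ring automorphism of $H^*(X_t)$ are again ring automorphisms; this is standard but should be cited.
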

 \begin{proof} (i) As we have $H^{i,0}(X_t)=0$ for $0<i<n$, the Hodge structure on $H^i(X_t,\mathbb{Q})$ has coniveau $\geq 1$. The variation of Hodge structure
 on $R ^if_*\mathbb{Q}$ is thus the Tate
 twist of an effective polarized  variation of Hodge structure of weight $i-2$. Hence its
 quasi-unipotency index is $\leq i-1$.

 (ii) The same argument applies to show that monodromy is not maximally unipotent on cohomology of
 odd degree if $X_t$ is hyper-K\"ahler, since $H^{2i+1,0}(X_t)=0$.
 We know by Verbitsky (Thm. \ref{Verbitsky cohomology}) that
 in degree $2i\leq 2n$, we have an injective map
 given by cup-product
 $$\mu_{i,t}:{\rm Sym}^{i} H^2(X_t,\mathbb{Q})\hookrightarrow  H^{2i}(X_t,\mathbb{Q}),$$
 which more generally induces an injection of local systems on
 $\Delta^*$
 $$\mu_i: {\rm Sym}^{i} (R^2f_*\mathbb{Q})\hookrightarrow R^{2i}f_*\mathbb{Q}.$$
 Note that $\mu_i$ is an morphism of variations of Hodge structures.
  Next,  using a relatively ample line bundle on $f$, we have an orthogonal
 decomposition
 $$R^{2i}f_*\mathbb{Q}={\rm Im}\,\mu_i\oplus B^{2i}$$
 where the local system $B^{2i}$ carries a polarized variation of Hodge structures of weight
 $2i$ with trivial $(2i,0)$-part, as the map
 $\mu_{i,t}$ induces a surjection on $(2i,0)$-forms.
 Applying the same argument as before, we conclude that the monodromy action on $B^{2i}$
 is of quasiunipotency index $\le 2(i-1)+1(<2i+1)$, so the monodromy acting on
 $H^{2i}$ is maximally unipotent if and only if it is maximally unipotent on
 ${\rm Sym}^i H^{2}(X_t,\mathbb{Q})$.
 It is then easy to see that this is the case if and only if it is maximally unipotent
 on $H^2(X_t,\mathbb{Q})$ (e.g. \cite[Lemma 2.4]{nagai}).
 \end{proof}

 By this proposition, the assumption $W_0H^n_{\lim}\neq 0$ implies in fact $W_0H^k_{\lim}\cong H^{k,0}(X_t)$ for all $k$ (since there is a $1$-dimensional contribution for each
 factor of Beauville--Bogomolov decomposition and it has to be maximally unipotent; it is here where we use in an essential
  way the assumption of the theorem that $X_t$ is simply connected, so that we can exclude the abelian variety factors in the Beauville--Bogomolov decomposition).

   These identifications are compatible with the cup product by the following lemma:  \begin{lem}\label{lemmacup} Let $\mc Z \rightarrow \Delta$ be a proper holomorphic map, smooth over
$\Delta^*$, with a central fiber $Z_0$ which is a (global) normal crossing divisor. We assume for simplicity that if  $Z_i, \,i\in I$ are the components of $Z_0$,
for each $J\subset I$,  $Z_J:=\cap_{j\in J}Z_j$ is either empty or connected.  Let
$\Sigma$ be the dual graph of $Z_0$. It has vertices $I$ and one simplex
$J\subset I$ for each non-empty $Z_J$.
The  two natural maps
$$a: H^*(|\Sigma|,\mathbb{Z})\rightarrow H^*(Z_0,\mathbb{Z}),$$
$$b: H^*(Z_0,\mathbb{Z})\rightarrow H^*(Z_t,\mathbb{Z})$$
are compatible with the cup-product.
\end{lem}
\begin{proof} The map $b$ is the specialization map already appearing in Definition
\ref{defist}, and called $sp_*$ there. It  is obtained by observing that $Z_0$ is a deformation retract of
$\mc Z$, hence has the same homotopy type as $\mc Z$. The map $b$ is then the restriction map
$H^*(\mc Z,\mathbb{Z})\rightarrow H^*(Z_t,\mathbb{Z})$ composed with the inverse of the
restriction  isomorphism $H^*(\mc Z,\mathbb{Z})\rightarrow H^*(Z_0,\mathbb{Z})$. Thus it is clearly
compatible with cup-product.

The map $a$ (which can be defined  using Corollary \ref{spectral sequence}
as the composite map
$H^p(|\Sigma|,\mathbb{Z})=E_2^{p,0}=E_\infty^{p,0}\rightarrow H^q(D,\mathbb{Z})$)
can also be constructed as follows: The realization $|\Sigma|$ of
$\Sigma$ is the union over all the faces $J$ of $\Sigma$ of the
simplices $ \Delta_J$, with identifications given by faces: for $J'\subset J$ the simplex
$\Delta_{J'}$ is naturally a face of $\Delta_J$.
Next we have a simplicial topological space $Z_0^\bullet$ associated to
$Z_0$, given by the $Z_J$ and the natural inclusions
$Z_{J'}\subset Z_J$ for each $J\subset J'$.
Let
$r(Z_0^\bullet)$ be the topological space constructed as the union over all
$J\in \Sigma$ of the $Z_J\times \Delta_J$ with gluings given
by the natural maps
$Z_J\times \Delta_{J'}\rightarrow Z_{J'}\times \Delta_J$ for each
inclusion $J'\subset J$.
There are two obvious continuous maps
$$g: r(Z_0^\bullet)\rightarrow Z_0,$$
$$ f: r(Z_0^\bullet)\rightarrow r(\Sigma).$$
The first map is just the projection to $Z_J$ on each $Z_J\times \Delta_J$, followed by the inclusion
in $Z_0$. This map is clearly a homotopy equivalence.
The second map is the  projection to $\Delta_J$ on each $Z_J\times \Delta_J$.
The map $b$ can be defined as the composition of $f^*: H^*(r(\Sigma),\mathbb{Z})\rightarrow
H^*(r(Z_0^\bullet),\mathbb{Z}) $ composed with the inverse of the isomorphism
$g^*:H^*(Z_0,\mathbb{Z})\cong H^*(r(Z_0^\bullet),\mathbb{Z}) $. It follows that $b$ is also compatible with cup-product.
\end{proof}
Together with the previous analysis, we now conclude that in the MUM case,
the cohomology algebra $H^*(|\Sigma|,\mathbb{C})$ is isomorphic to the  algebra of holomorphic forms
$\oplus_iH^0(Z_0,\Omega_{Z_0}^i)$ and also  to the  algebra of holomorphic forms
$\oplus_iH^0(Z_t,\Omega_{Z_t}^i)$.
 We next have  the following lemma.

\begin{lem}\label{lecohalg} Let $X$ be a simply connected compact K\"ahler manifold with trivial
canonical bundle. Then the type of the Beauville-Bogomolov decomposition of $X$ is determined
by the algebra $\oplus_iH^0(X,\Omega_X^i)$.
\end{lem}
\begin{proof} For a Calabi-Yau manifold of dimension $k_i$, there is exactly one holomorphic form
$\omega_i$ of degree $k_i$ and it satisfies $\omega_i^2=0$, while for a hyper-K\"ahler factor $X_j$, the algebra $H^0(X_j,\Omega_{X_j}^\cdot)$ is generated in degree $2$ with one generator
$\sigma_j$ satisfying the equation $\sigma_j^{l_j+1}=0$.
The algebra $A_X^\cdot:=H^0(X,\Omega_X^{\cdot})$ is the tensor product of algebras of these types.
Consider for any integer $k$ the set
$(A_X^2)_k=:\{u\in A_X^2,\,u^{k+1}=0\}$. Let $k_0$ be the
smallest $k$ such that $(A_X^2)_k\not=0$. Then it is immediate that the hyper-K\"ahler
summands are all of dimension $\geq 2k_0$ and that there are exactly
$a_k:={\rm dim}\,(A_X^2)_k$ summands of dimension $2k_0$. The quotient
of $A_X^\cdot$ by the ideal generated by $(A_X^2)_k$ is
the algebra $A_{X'}^\cdot$ of holomorphic forms on the variety $X'$ which is the product
of all Calabi-Yau factors  of $X$ and hyper-K\"ahler factors which are of dimension $>2k_0$.
Continuing with $X'$, we see that the multiplicities of the dimensions of the hyper-K\"ahler factors
are determined by $A_X^\cdot$, and that $A_X^\cdot$ determines the algebra
$A_{X''}^{\cdot}$ of holomorphic forms on the variety $X''$ which is the product of all Calabi-Yau
summands of $X$ of dimension $>2$. It is clear that the latter determines the dimensions
(with multiplicities) of the Calabi-Yau summands of $X$, as they correspond to the degrees
(with multiplicities)
of generators
of the algebra $A_{X''}^{\cdot}$.
\end{proof}

The proof of Theorem \ref{theostructureskeleton} is now complete.
\end{proof}
\begin{coro}\label{coronagaiIII}
Nagai's Conjecture \ref{conjnagai} holds for Type III degenerations of hyper-K\"ahler manifolds.
\end{coro}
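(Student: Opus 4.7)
The plan is to derive this as a direct consequence of Proposition \ref{lemum}(ii), which equates Type III degenerations of hyper-K\"ahler manifolds with MUM degenerations in all even degrees, coupled with the precise form of Nagai's conjectural formula.

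First I would translate the hypothesis into the language of nilpotency indices: Type III for an HK degeneration $\calX/\Delta$ of dimension $2n$ means precisely $\nilp(N_2) = 3$, so the right-hand side of Nagai's formula becomes $k(\nilp(N_2)-1)+1 = 2k+1$. Thus the corollary reduces to showing $\nilp(N_{2k}) = 2k+1$ for every $1 \le k \le n$, which is exactly the statement that the monodromy on $H^{2k}(X_t,\bQ)$ is maximally unipotent.

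Next I would invoke Proposition \ref{lemum}(ii). It states that for a projective degeneration of hyper-K\"ahler manifolds, once the monodromy on $H^2$ is maximally unipotent (i.e.\ the degeneration is of Type III), it is automatically maximally unipotent on $H^{2k}$ for every $k \le n$. The proof there proceeds via Verbitsky's injection $\Sym^k H^2(X_t,\bQ) \hookrightarrow H^{2k}(X_t,\bQ)$ of variations of Hodge structure, whose cokernel has trivial $(2k,0)$-part and hence strictly smaller nilpotency index; thus the nilpotency index on $H^{2k}$ is controlled by that on $\Sym^k H^2$, which is $2k+1$ precisely when it is $3$ on $H^2$.

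Combining, $\nilp(N_{2k}) = 2k+1 = k(\nilp(N_2)-1)+1$, which is Nagai's formula. The only thing to check separately is the odd-degree case, but in a hyper-K\"ahler Beauville--Bogomolov decomposition the odd cohomology has trivial $(p,0)$-parts (since $h^{i,0}=0$ for $i$ odd), so by the effectivity/coniveau argument in the proof of \ref{lemum}(ii) the monodromy in odd degree cannot reach the maximal index, and in any case Nagai's formula is only asserted on even cohomology. No obstacle arises; the entire content of the corollary is packaged into Proposition \ref{lemum}(ii), so the proof is a one-line deduction.
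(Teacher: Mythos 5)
Your proof is correct and coincides with the paper's (the corollary is stated without a separate argument precisely because it is the immediate consequence of Proposition \ref{lemum}(ii) that you spell out: Type III means $\nilp(N_2)=3$, so Nagai's formula asserts maximal unipotency $\nilp(N_{2k})=2k+1$ on each $H^{2k}$, $k\le n$, which is exactly what \ref{lemum}(ii) delivers). Your added remark that odd degrees are outside the scope of the conjecture, and in any case cannot be maximally unipotent by the coniveau argument, is also consistent with the paper.
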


\subsection{The Type II case} \label{typeII}
We now focus on the intermediary Type II case. The aim of the subsection is to prove the following result (which together with the results in the Type I and III cases completes the proof of Theorems \ref{theodualcx} and \ref{theonagai}).

\begin{theo}\label{theotype2}
Let $\calX/\Delta$ be a projective degeneration of hyper-K\"ahler manifolds, with $X_t$ smooth of dimension $2n$. Assume that the degeneration has Type II (i.e. $N^2=0$ and $N\neq 0$ on $H^2(X_t)$). Then the following hold:
\begin{itemize}
\item[i)] $\dim \Sk(\calX)=n$;
\item[ii)] For $k\in\{2,n\}$, the index of nilpotency for the monodromy action on $H^{2k}$ is at least $k+1$ and at most $2k-1$.
\end{itemize}
\end{theo}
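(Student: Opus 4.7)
The plan is to pass first to a minimal dlt model of the degeneration via Theorem~\ref{thmkulikovmodel}; this is permissible after a finite base change and preserves both the nilpotent part of the monodromy and the essential skeleton $\Sk(\calX)=|\Sigma|$ up to PL-homeomorphism (Section~\ref{secskeleton}). Working with such a model, simplices of $\Sigma$ correspond to log canonical centers of $(\calX, X_0)$, and in particular $\dim|\Sigma| = 2n - \dim Z$, where $Z$ is the deepest log canonical center in $X_0$, so part (i) reduces to showing $\dim Z = n$.

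For part (ii), I would establish the lower bound $\nilp(N_{2k})\ge k+1$ directly from Verbitsky's injection $\mu_k:\Sym^k H^2(X_t,\bQ)\hookrightarrow H^{2k}(X_t,\bQ)$. Since $N$ acts on $\Sym^k H^2$ as a derivation and $N|_{H^2}$ is nilpotent of exact order $2$, an elementary Jordan-basis calculation (pick $w\in H^2$ with $Nw\ne 0$ and note $N^k(w^k)=k!\,(Nw)^k\ne 0$ while $N^{k+1}(w^k)=0$) shows the nilpotency of $N$ on $\Sym^k H^2$ is exactly $k+1$. For the upper bound, I would decompose the VHS orthogonally (using Verbitsky and a relative polarization)
\[
R^{2k}f_*\bQ\,|_{\Delta^*} = \mu_k\bigl(\Sym^k R^2f_*\bQ\bigr)\oplus \cB^{2k},
\]
with $\cB^{2k}$ a sub-VHS of weight $2k$. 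Since $\sigma_t^k$ spans $H^{2k,0}(X_t)$ and lies in the image of $\mu_k$, one has $h^{2k,0}(\cB^{2k})=0$, so $\cB^{2k}$ has coniveau $\ge 1$ and is (after Tate twist) a polarized VHS of weight $2k-2$, which forces $\nilp(N|_{\cB^{2k}})\le 2k-1$. Combined with $\nilp(N|_{\Sym^k H^2})=k+1\le 2k-1$ for $k\ge 2$, this yields $\nilp(N_{2k})\le 2k-1$.

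For part (i) the strategy is to match the combinatorics of the dlt stratification of $X_0$ with the weight filtration on $H^{2n}_{\lim}$ via Deligne's weight spectral sequence (cf.\ Corollary~\ref{spectral sequence}). For the lower bound $\dim|\Sigma|\ge n$: the Type II hypothesis gives a nonzero $\delta\in \Gr^W_1 H^2_{\lim}$ (since $N\neq 0$ and $N:\Gr^W_3\to\Gr^W_1$ is an isomorphism), and then $\mu_n(\delta^n)$ is a nonzero class in the extremal symmetric piece $\Gr^W_n H^{2n}_{\lim}$. The weight spectral sequence applied to an SNC resolution of $X_0$, combined with the du Bois specialization of Section~\ref{secmono}, builds this extremal weight piece out of cohomology of strata of codimension at least $n$, thus forcing the existence of an $n$-simplex in $\Sigma$. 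For the upper bound $\dim|\Sigma|\le n$ I would argue by contradiction: a deeper stratum (codimension $d>n$ in $X_0$) would, by adjunction in the minimal dlt model, be $K$-trivial of dimension $<n$ and hence carry a nonzero holomorphic top form; pushing this form through the weight spectral sequence and using the isomorphism on $I^{p,0}$-pieces produces a class in a weight piece of $H^{2n}_{\lim}$ incompatible with the upper nilpotency bound from part (ii).

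The main obstacle is expected to be the last step. In Type III the skeleton is closed and its dimension is detected directly by $W_0H^{2n}_{\lim}\ne 0$, but in Type II the skeleton has boundary and $W_0H^{2n}_{\lim}=0$ (since $\sigma_{\lim}^n$ now lives in weight $\le 3n$ rather than $4n$), so the dimension cannot be read off from cohomology alone. One must instead track all graded pieces of $\sigma_{\lim}^n$ and determine precisely which dlt strata can support them, adapting to the hyper-K\"ahler setting the Koll\'ar--Xu analysis of dlt strata in the MUM Calabi--Yau case. Sharpening this into the exact equality $\dim Z=n$ (rather than an inequality in some range) is where the bulk of the technical work lies, and where the symplectic nature of hyper-K\"ahler manifolds---in particular the Verbitsky subalgebra and the Lagrangian constraints on strata supporting powers of $\sigma_{\lim}$---must be fully exploited.
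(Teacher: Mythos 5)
Your part (ii) is correct and is essentially the paper's argument: the lower bound comes from Verbitsky's injection $\mu_k$ (your computation $N^k(w^k)=k!\,(Nw)^k$ is a clean way of seeing the paper's claim that $I^{k,0}(H^{2k}_{\lim})\neq 0$), and the upper bound via the orthogonal splitting $R^{2k}f_*\bQ=\im\mu_k\oplus\cB^{2k}$ with $\cB^{2k}$ of coniveau $\geq 1$ is exactly the mechanism of Proposition \ref{lemum}.

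The gap is in the upper bound of part (i), and although you correctly flag this as the hard step, the contradiction you propose does not close. If the deepest stratum has codimension $d>n$ in $X_0$, its top form yields $I^{2n-d,0}(H^{2n}_{\lim})\neq 0$, hence $\Gr^W_{2n-d}H^{2n}_{\lim}\neq 0$ and, by the symmetry $N^j:\Gr^W_{2n+j}\simeq \Gr^W_{2n-j}$, only $\nilp(N_{2n})\geq d+1$. For $n<d\leq 2n-2$ this is still $\leq 2n-1$ and is perfectly consistent with the bound from part (ii); your argument excludes only $d\geq 2n-1$, so it fails for every $n\geq 3$. The contradiction that actually works is dimensional rather than nilpotency-based: $\bigoplus_p I^{p,0}(H^{2n}_{\lim})$ has total dimension $h^{0,2n}(X_t)=1$ (its conjugate is $\Gr_F^0H^{2n}_{\lim}$), and this one-dimensional space is already occupied by $I^{n,0}\ni \mu_n\bigl((N\sigma_{\lim})^n\bigr)\neq 0$ coming from Verbitsky, so $I^{2n-d,0}=0$ for every $d\neq n$. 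The paper in fact sidesteps a separate upper bound entirely: in a minimal dlt model a stratum is minimal with respect to inclusion iff it is $K$-trivial (adjunction gives $K_W\sim -D_W$ with $D_W$ the sum of deeper strata, so a nonzero top holomorphic form forces $D_W=0$), and all minimal strata are birational to one another; hence exhibiting a top form on a codimension-$n$ stratum proves $\dim \Sk(\calX)=n$ exactly. Moreover, the paper produces that form not through the weight spectral sequence of an snc resolution (which would also leave you the problem that an $n$-simplex of the snc model need not survive in the minimal dlt model, since the retraction can drop dimension) but through the Mayer--Vietoris spectral sequence for $\mc O_{X_0}$ on the dlt model itself (Corollary \ref{spectral sequence} and Lemma \ref{cuproduct}): the generator $\eta$ of $H^2(\mc O_{X_0})$ lives in $E_1^{1,1}=H^1(\mc O_{X_0^{[1]}})$ for weight reasons, and $\eta^n\neq 0$ in $H^n(\mc O_{X_0^{[n]}})$ because its image in $\Gr_F^0H^{2n}_{\lim}$ is nonzero by Verbitsky's injectivity; a nonzero class in $H^n(\mc O_W)$ for an $n$-dimensional stratum $W$ is the required top form. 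I recommend replacing your weight-spectral-sequence step by this $\Gr_F^0$ argument.
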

\begin{proof} Similarly to the Type III case discussed previously, using  \cite[Cor. 2.4]{nagai} and Theorem \ref{Verbitsky cohomology}, we conclude that the nilpotency index  on $\Sym^k H^2$ is $k+1$, and thus the nilpotency index on $H^{2k}$ is at least $k+1$. Conversely, since $H^{2k}/\Sym^k H^2$ is a Hodge structure of level $2k-2$, it follows that the nilpotency index is at most $2k-1$.

To conclude, we note that the arguments of \cite[Claim 32.1]{kx} show that the dimension of $\Sk(\calX)$ is precisely $n$. This is equivalent to saying that the codimension of the deepest stratum in a dlt Type II degeneration is $n$. For a minimal dlt degeneration, we know $X_0$ has trivial canonical bundle. This means that its components are log Calabi-Yau $(V,D)$ with $K_V+D=0$. Inductively, each component of the strata is log Calabi-Yau (e.g. in the $K3$ situation the codimension $1$ components are either elliptic curves or $\bP^1$ with 2 marked points) and is $K$--trivial if and only if it is contained in every component of $X_0$ that intersects it. Hence, a stratum  $ W \subset X_0^{[p]}$  is minimal with respect to inclusion if and only if it has a top holomorphic form (and is thus a $K$--trivial variety with at worst canonical singularities). Moreover, all minimal strata are birational \cite[4.29]{kk-singbook}. It follows that to show that the dual complex has dimension $n$, we only need to produce a top holomorphic form on an $n$--dimensional stratum.
To show this look at the spectral sequence \eqref{ss eq}. We first notice that there is a non zero class in $H^1(\mc O_{X_0^{[1]}})$ which  generates $H^2(\mc O_{X_0})$. To see this we only need to show that there is no contribution from $H^2(\mc O_{X_0^{[0]}})$ and from $H^0(\mc O_{X_0^{[2]}})$. By weights considerations, both statements are clearly true for the spectral sequence of a snc filling. However, since the strata of a dlt filling have rational singularities (Prop. \ref{direct images}) the statement for a snc filling implies that for a dlt filling.
Hence, the only possibility is that a generator for $\bar \eta \in H^2(\mc O_{X_0})$ has to come from a class $\eta \in H^1(\mc O_{X_0^{[1]}})$. By Lemma \ref{cuproduct}, we may consider the product $\eta^n \in H^n(\mc O_{X_0^{[n]}})$ which is non zero since $\bar \eta^n$ has to be non zero and we may conclude that the deepest stratum has dimension $n$.
\end{proof}

\begin{rema} J. Nicaise pointed out that $\dim \Sk(\calX)=n$ follows also
 from Halle-Nicaise \cite{hn} (see Theorem 3.3.3 and esp. (3.3.4) of loc. cit.).
\end{rema}

%%%%%%%%%%%%%%%%%%%%%%%%%%%%%%%%%%%%
\appendix
\section{Reduced dlt pairs} \label{dltpairs}

The purpose of this section is to show that for many aspects reduced dlt pairs behave like snc. Most of the results are well known to the experts (cf. \cite{dkx}).

\begin{definition}\label{defdlt}
A log canonical (lc) pair $(X, D)$ is called dlt if for every divisor $E$ over $(X, D)$ with discrepancy $-1$, the pair $(X,D)$ is snc at the generic point of $center_X(E)$.
\end{definition}

Given a reduced dlt pair $(X, D)$ (i.e.  the divisors appearing in $D= \sum_I D_i$ have coefficient $1$) a stratum of $(X, D)$ is an irreducible component of $D_J:=\cap_J D_i$, for some $J \subset I$.  By \cite[4.16]{kk-singbook}, \cite{fujino} the strata of $(X, D)$ have the expected codimension (i.e. the strata of codimension $k$ in $X$ are the irreducible components of the intersection of $k$ components of $D$) and are precisely the log--canonical centers of $(X, D)$. In particular, $(X, D)$ is snc at the generic point of every stratum and every stratum of codimension $k$ is contained in exactly $k+1$ strata of codimension $k-1$. As noticed in \cite[(8)]{dkx}, this observation is enough to specify the glueing maps needed to define a dual complex. In other words, the dual complex of a dlt pair can be defined just as in the snc case and it satisfies
$$
\Sigma((X,D))=\Sigma((X,D)^{snc}),
$$
where $(X,D)^{snc}$ is the largest open subset of $X$ where the pair $(X, D)$ is snc.
In Proposition \ref{resolution} we show another instance of the fact that ``dlt is almost snc'', namely that given a reduced dlt pair $(X, D)$ we can use the Mayer--Vietoris sequence  \cite{griffiths-schmid} to compute the cohomology of $D$. This was applied in Section \ref{typeII}  to a  minimal dlt degeneration $\calX/D$ of $K$--trivial varieties, since the pair $(\calX,X_0)$ is dlt.

\begin{definition} \cite[(2.78)]{kk-singbook} Let $X$ be a normal variety, let $D \subset X$ be a reduced divisor, and let $f: Y \to X$ be a resolution such that $(Y, D_Y:=f_*^{-1}(D))$ is a snc pair. Then $f: (Y,D_Y) \to (X, D)$ is called rational if
\begin{enumerate}
\item $f_*\mc O_Y(-D_Y)=\mc O_X(-D)$;
\item $R^i{f_*}\mc O_Y(-D_Y)=0$ for $i>0$;
\item $R^if_*\mc \omega_Y(D_Y)=0$ for $i>0$.
\end{enumerate}
\end{definition}

\begin{prop} \label{direct images} Let $(X,D)$ be a reduced dlt pair with $D=\sum_I D_i$ and let $f: (Y,D_Y) \to (X, D)$ be a rational resolution.
For every reduced divisor $D' \le D$, setting $D'_Y:=f^{-1}_* D'$ we have
\be \label{coreq1}
f_*\mc O_Y(-D'_Y)=\mc O_X(-D')\quad \text{ and } \quad
R^i{f_*}\mc O_Y(-D'_Y)=0, \quad \text{ for } i>0.
\ee
 and for every $J \subset I$
\be \label{coreq2}
f_*\mc O_{({D_{Y})}_J}=\mc O_{D_J} \quad \text{ and } \quad R^if_*\mc O_{({D_{Y}})_J}=0 \quad \text{ for } i>0.
\ee

In particular, the induced resolution $f_{|({D_{Y}})_J}:(D_{Y})_J \to D_J$ has connected fibers and every connected component of $D_J$ is irreducible, normal, and has rational singularities.
\end{prop}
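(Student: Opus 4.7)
The plan is to prove \eqref{coreq1} by a nested induction---on $\dim X$ in the outer loop, on the number of components of $D-D'$ in the inner loop---and then to derive \eqref{coreq2} from \eqref{coreq1} via a Koszul resolution on $Y$. Throughout I will invoke Koll\'ar's theorems (cf.\ \cite[4.16, 5.25]{kk-singbook}) that every log canonical centre of a reduced dlt pair is normal and that the total space of a dlt pair has rational singularities; the latter already furnishes the case $D'=0$, i.e.\ $Rf_*\mc O_Y=\mc O_X$, which anchors the induction.

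For the inner induction proving \eqref{coreq1}, the terminal case $D'=D$ is the hypothesis that $f$ is a rational resolution. For the inductive step, pick a component $D_i\le D-D'$ and tensor the sequence $0\to \mc O_Y(-D_{Y,i})\to\mc O_Y\to\mc O_{D_{Y,i}}\to 0$ by $\mc O_Y(-D'_Y)$ to obtain
\[
0\to \mc O_Y(-D'_Y-D_{Y,i})\to \mc O_Y(-D'_Y)\to \mc O_{D_{Y,i}}\bigl(-D'_Y|_{D_{Y,i}}\bigr)\to 0.
\]
The leftmost term is handled by the inner induction hypothesis (one more component to strip off). For the rightmost term, $f|_{D_{Y,i}}\colon D_{Y,i}\to D_i$ is a log resolution of the reduced dlt pair $(D_i,\mathrm{Diff}_{D_i}(D-D_i))$ supplied by adjunction \cite[4.9]{kk-singbook} on the normal base $D_i$; its rationality is a consequence of the snc nature of $(Y,D_Y)$ along $D_{Y,i}$ together with Grauert--Riemenschneider-type vanishing. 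The outer induction on $\dim X$ then computes $Rf_*\mc O_{D_{Y,i}}(-D'_Y|_{D_{Y,i}})$ as $\mc O_{D_i}(-D'|_{D_i})$ concentrated in degree~$0$, and chasing the long exact sequence closes the step.

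To obtain \eqref{coreq2}, I use the Koszul resolution on the snc pair $(Y,D_Y)$,
\[
0\to \mc O_Y\bigl(-\textstyle\sum_{j\in J}D_{Y,j}\bigr)\to\cdots\to\bigoplus_{j\in J}\mc O_Y(-D_{Y,j})\to \mc O_Y\to \mc O_{(D_Y)_J}\to 0.
\]
Applying $Rf_*$ termwise and invoking \eqref{coreq1} produces a Koszul-type complex on $X$ with terms $\mc O_X(-\sum_{j\in S}D_j)$ for $S\subseteq J$. Since the strata of the dlt pair $(X,D)$ have the expected codimension (recalled at the start of this appendix), the $\{D_j\}_{j\in J}$ form a regular sequence locally along $D_J$, so this complex resolves $\mc O_{D_J}$ and we get $Rf_*\mc O_{(D_Y)_J}=\mc O_{D_J}$ concentrated in degree~$0$. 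The final assertions are then immediate: $f_*\mc O_{(D_Y)_J}=\mc O_{D_J}$ combined with the smoothness of $(D_Y)_J$ gives Stein factorisation, so the connected components of $D_J$ are normal, irreducible images of connected components of $(D_Y)_J$ with connected fibres, while the vanishing of the higher $R^if_*$ together with the dual Grauert--Riemenschneider vanishing for $\omega_{(D_Y)_J}$ yields rational singularities.

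The main obstacle I expect is the rigorous verification that the restriction $f|_{D_{Y,i}}$ is a rational resolution of the adjoint dlt pair $(D_i,\mathrm{Diff})$: conceptually this is a compatibility of restriction with the rationality of $f$, but unwinding the definitions requires careful bookkeeping of the different, especially along the non-snc locus of $(X,D)$. Once this is in place, both inductions proceed smoothly and the Koszul step is essentially formal.
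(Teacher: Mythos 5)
The substantive gap is in your proof of \eqref{coreq1}, at the step where you compute $Rf_*$ of the term $\mc O_{D_{Y,i}}(-D'_Y|_{D_{Y,i}})$. To invoke the outer (dimension) induction for the adjoint pair $(D_i,\mathrm{Diff}_{D_i}(D-D_i))$ you must first know that $f|_{D_{Y,i}}$ is a \emph{rational} resolution of that pair, and this does not follow from ``the snc nature of $(Y,D_Y)$ along $D_{Y,i}$ together with Grauert--Riemenschneider-type vanishing.'' GR-type vanishing yields only condition (3) of the definition of rational resolution; conditions (1) and (2) --- the pushforward identity for the ideal sheaf of the full boundary and the vanishing of its higher direct images --- are the substantive ones, and they are precisely an instance of \eqref{coreq1} (with $D'$ the whole boundary) one dimension down. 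Since your induction hypothesis is \emph{conditional} on the rationality of the resolution, it cannot be used to establish that very hypothesis: the induction does not close. This is not bookkeeping of the different; it is the content of Koll\'ar's theorems that reduced dlt pairs are rational pairs, that rationality is independent of the (thrifty) resolution, and that it passes to reduced subboundaries --- exactly the results \cite[(2.87), (2.88)]{kk-singbook} that the paper cites in lieu of any argument. Repairing your proof means either citing those results (at which point the inner induction is unnecessary) or reproving them, which requires the Cohen--Macaulayness of the divisorial ideals $\mc O_X(-D')$ and a duality argument, not adjunction alone.

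Your Koszul derivation of \eqref{coreq2} is a genuinely different route from the paper's bare ``induction on $|J|$,'' and it can be made to work, but the justification is backwards. The $D_j$ are Weil divisors that need not be Cartier on $X$, so ``the $\{D_j\}_{j\in J}$ form a regular sequence locally along $D_J$'' has no direct meaning, and the exactness of the pushed-forward complex is essentially equivalent to the reducedness and expected structure of the scheme-theoretic intersections --- part of what is being proven. The deduction should run the other way: the Koszul complex is exact on $Y$ because $(Y,D_Y)$ is snc; by \eqref{coreq1} each term is $f_*$-acyclic, so $Rf_*\mc O_{(D_Y)_J}$ is represented by the complex of pushforwards, which sits in non-positive degrees --- this already forces $R^{i}f_*\mc O_{(D_Y)_J}=0$ for $i>0$ --- while $f_*\mc O_{(D_Y)_J}$ is identified with the cokernel of $\bigoplus_{j}\mc O_X(-D_j)\to\mc O_X$, a reduced quotient of $\mc O_X$ supported on $D_J$, hence equal to $\mc O_{D_J}$. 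With that reordering the final assertions (connected fibers, normality, irreducibility, rational singularities of the components of $D_J$) follow as you say.
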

\begin{proof} Item \eqref{coreq1} follows from \cite[(2.87),(2.88)]{kk-singbook}. Then
\eqref{coreq2} follows by induction on $|J|$.
\end{proof}

\begin{rema}
From this corollary it follows that in the definition of dual complex of a reduced dlt pair we could consider the connected components of the intersections, rather than the irreducible components (cf. \cite[(8)]{dkx}).
\end{rema}

Let $(X,D)$ be a reduced dlt pair $(X, D)$, with $D= \sum_I D_i$, and fix an ordering of $I$. Denote by $D^{[k]}$  the disjoint union of the strata that have codimension $k$ in $D$.  For a sheaf $\mc F$ on $D$, the Mayer--Vietoris complex of $\mc F$ is
\[
\mc F_{D^\bullet}: \mc F_{D^{[0]}} \to  \mc F_{D^{[1]}} \to \dots \to  \mc F_{D^{[d]}},
\]
where $d=\dim |\Sigma(D)|$, where $\mc F_{D^{[k]}}$ denotes the pullback of $\mc F$ to $D^{[k]}$ via the natural morphism $i_k: D^{[k]} \to D$, and where the differential of the complex is induced by the natural restriction maps $\mc F_{D_J} \to \mc F_{D_{J \cup j}}$, with a plus or a minus sign according to the parity of the position of $j$ in $J \cup j$.

\begin{prop}  \label{resolution}
Let $(X, D)$ be a reduced dlt pair with $D= \sum D_i$. If $\mc F=\mc O_D$ (or is locally free) or $\mc F=\bQ$ (or is a constant sheaf), then $\mathcal{F}_{D^\bullet}$ is a resolution of $\mathcal F$.
\end{prop}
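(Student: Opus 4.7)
The argument is local on $X$, so I would fix a point $x\in D$ and verify exactness of the stalk of $\mathcal{F}_{D^\bullet}$, treating the coherent case $\mathcal{F}=\mathcal{O}_D$ and the constant case $\mathcal{F}=\mathbb{Q}$ separately.

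For $\mathcal{F}=\mathcal{O}_D$ (the general locally free case then follows by tensoring, since locally free sheaves are flat and flatness preserves exactness), the plan is to reduce to the snc case via a rational resolution. Choose $f\colon(Y,D_Y)\to(X,D)$ with $D_Y:=f_{*}^{-1}D$ snc and $f$ a rational resolution in the sense of the definition preceding Proposition \ref{direct images}; such resolutions exist by standard log-resolution arguments. On $Y$ the Mayer--Vietoris complex $\mathcal{O}_{D_Y^\bullet}$ is a resolution of $\mathcal{O}_{D_Y}$ by the classical snc statement (checked \'etale-locally on a union of coordinate hyperplanes via a Koszul-type computation). I then apply $f_*$. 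Proposition \ref{direct images} provides the key input: $f_*\mathcal{O}_{(D_Y)_J}=\mathcal{O}_{D_J}$, with connected components of $(D_Y)_J$ matching those of $D_J$, and $R^if_*\mathcal{O}_{(D_Y)_J}=0$ for $i>0$. The vanishing of higher direct images means that $Rf_*$ of the $D_Y$-Mayer--Vietoris resolution coincides termwise with $f_*$ applied to it, namely the $D$-Mayer--Vietoris complex $\mathcal{O}_{D^\bullet}$. Combined with $Rf_*\mathcal{O}_{D_Y}=\mathcal{O}_D$ (obtained from $0\to\mathcal{O}_Y(-D_Y)\to\mathcal{O}_Y\to\mathcal{O}_{D_Y}\to 0$ together with the two rationality vanishings $R^if_*\mathcal{O}_Y(-D_Y)=0$ and $R^if_*\mathcal{O}_Y=0$), this yields the desired resolution.

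For $\mathcal{F}=\mathbb{Q}$ (or any constant sheaf), pushforward does not interact well with constant coefficients, so one cannot argue by descent from an snc resolution. Instead I would check exactness directly on stalks. After shrinking $X$ around $x$, let $D_1,\dots,D_r$ be the components of $D$ through $x$. Since each stratum is normal by Proposition \ref{direct images}, it is analytically irreducible at every point it contains, so the stalk of $\mathbb{Q}_{D^{[k]}}$ at $x$ is $\mathbb{Q}^m$ where $m$ is the number of irreducible components of $D_J$ ($|J|=k+1$, $J\subseteq\{1,\dots,r\}$) through $x$. The key combinatorial input, which relies essentially on the dlt hypothesis combined with the expected-codimension statement of \cite[4.16]{kk-singbook}, is that for every $J\subseteq\{1,\dots,r\}$ there is exactly one irreducible component of $D_J$ meeting $x$. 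Granted this, the stalk of $\mathbb{Q}_{D^\bullet}$ at $x$ is the augmented simplicial cochain complex of the standard $(r-1)$-simplex with $\mathbb{Q}$-coefficients (with the MV sign conventions matching the simplex coboundary), and its cohomology is concentrated in degree $0$ and equal to $\mathbb{Q}$.

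The main obstacle is justifying the local simplex property used in the $\mathbb{Q}$ case: given the dlt hypothesis, one must rule out the possibility that $D_J$ has two or more distinct irreducible components passing through a common point $x$. This is the step where the combinatorial input specific to dlt (as opposed to general slc) must be invoked, combining the snc-at-generic-point of every log canonical center with the normality and expected-codimension of the strata; the rest of the argument is a routine unwinding of the Mayer--Vietoris complex.
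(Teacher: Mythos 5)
Your treatment of $\mathcal{F}=\mathcal{O}_D$ is essentially the paper's argument: push the snc Mayer--Vietoris resolution on $(Y,D_Y)$ down through a rational resolution, using Proposition \ref{direct images} to identify $f_*\mathcal{O}_{D_Y^\bullet}$ with $\mathcal{O}_{D^\bullet}$ and to kill the higher direct images; your explicit remark that one also needs $Rf_*\mathcal{O}_{D_Y}=\mathcal{O}_D$ (so that the pushed-down complex resolves $\mathcal{O}_D$ and not something else) is a detail the paper leaves implicit. For $\mathcal{F}=\mathbb{Q}$ you take a genuinely different route. The paper does not compute stalks via a local simplex structure; it observes instead that for a small neighborhood $U$ of $x$ the sections of $\mathbb{Q}_{D^{[k]}}$ over $i_k^{-1}(U)$ coincide with those over $i_k^{-1}(U\cap X^{snc})$ (each stratum is irreducible, normal, and meets the snc locus, so deleting the non-snc locus changes neither the local branches nor their connectedness), and then quotes the classical snc computation on $U\cap X^{snc}$. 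Your stalkwise argument is equally valid and arguably more self-contained, but the ``main obstacle'' you flag is not actually open: Proposition \ref{direct images} states that every \emph{connected} component of $D_J$ is irreducible, so distinct irreducible components of $D_J$ are pairwise disjoint, which is exactly the uniqueness of the local component of $D_J$ through $x$; existence is trivial, since $x\in D_j$ for all $j\in J$ gives $x\in\bigcap_{j\in J}D_j=D_J$. (This is also the content of the remark following Proposition \ref{direct images}, namely that connected and irreducible components of the strata coincide.) With that observation your stalk complex is the augmented cochain complex of the full simplex on the local components of $D$ through $x$, and your proof closes without any appeal to the deeper structure theory of lc centers beyond what Proposition \ref{direct images} already provides.
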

\begin{proof} We start with $\mc F=\mc O_D$. Since $(Y, D_Y)$ is a snc pair, $\mathcal{O}_{D_Y^\bullet}$ is a resolution of $\mathcal O_{D_Y}$ (see for example \cite{fm}).
From Corollary \ref{direct images} it follows both that the complex $f_*\mathcal{O}_{D_Y^\bullet}$ is exact and that $f_*\mathcal{O}_{D_Y^\bullet}=\mathcal{O}_{D^\bullet}$.
Now the case $\mc F=\bQ$.
Let $U \subset X$ be any open set such that on $U \cap X^{snc}$ the divisor $D$ is given by the vanishing of a product of local coordinates. The complex $\Gamma(i^{-1}_\bullet(U \cap X^{snc} ), \bQ_{D^{[\bullet]}})$ is exact except in degree zero, where it has cohomology equal to $\Gamma(U \cap X^{snc} \cap D, \bQ)$ (e.g. \cite{morrison}). As a consequence, the complex is exact on the snc locus.
By the dlt assumption, every connected (hence irreducible) component of $D^{[k]}$ intersects the snc locus of $(X, D)$ so   $\Gamma(i^{-1}_k(U), \bQ_{D^{[k]}})=\Gamma(i^{-1}_k(U \cap X^{snc}), \bQ_{D^{[k]}})$.
Hence, for any $x \in D$ there is a sufficiently small open neighborhood $U$ such that the complex $\Gamma(i^{-1}_\bullet(U), \bQ_{D^{[\bullet]}})  \cong \Gamma(i^{-1}_\bullet(U \cap X^{snc} ), \bQ_{D^{[\bullet]}}) $ is exact except in degree zero where it has cohomology equal to $\Gamma(U\cap D, \bQ)$ and the proposition follows.
\end{proof}

\begin{coro} \label{spectral sequence}
For $(X,D)$ and $\mc F$ as above there is a spectral sequence with $E_1$ term
\be \label{ss eq}
E_1^{p, q}=H^q (\mathcal F_{D^{[p]}})
\ee
abutting to $H^*(\mathcal F)$.
\end{coro}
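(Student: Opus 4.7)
The plan is to obtain the spectral sequence as one of the two hypercohomology spectral sequences associated to the double complex resolving $\mathcal{F}$ via the Mayer--Vietoris complex $\mathcal{F}_{D^\bullet}$. The main input is already Proposition \ref{resolution}, which tells us that $\mathcal{F}_{D^\bullet}$ is a resolution of $\mathcal{F}$. All that remains is to feed this through a standard hypercohomology argument.

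More concretely, I would proceed as follows. First, I would choose a termwise acyclic resolution of each sheaf $(i_p)_* \mathcal{F}_{D^{[p]}}$ on $D$: when $\mathcal{F} = \mathcal{O}_D$ one may take an affine Čech resolution, and when $\mathcal{F} = \mathbb{Q}$ (or any constant/locally constant sheaf) one takes a Godement or injective resolution. Assembling these into a double complex $C^{p,q}$ gives a complex of acyclic sheaves on $D$ whose associated total complex, by Proposition \ref{resolution} together with acyclicity of the resolutions in each column, is quasi-isomorphic to $\mathcal{F}$. Taking global sections therefore computes $H^*(D,\mathcal{F})$.

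Next I would run the two standard filtration spectral sequences of the double complex $\Gamma(D, C^{\bullet,\bullet})$. Filtering by rows first and using that each column $C^{p,\bullet}$ is an acyclic resolution of $(i_p)_* \mathcal{F}_{D^{[p]}}$ yields a spectral sequence whose $E_1^{p,q}$ term is
\[
E_1^{p,q} = H^q\bigl(D,(i_p)_*\mathcal{F}_{D^{[p]}}\bigr) = H^q\bigl(D^{[p]}, \mathcal{F}_{D^{[p]}}\bigr),
\]
the last equality because $i_p \colon D^{[p]} \to D$ is a finite (in fact closed) morphism so $(i_p)_*$ is exact and preserves cohomology. Filtering the other way, using Proposition \ref{resolution}, gives $E_2 = H^*(D,\mathcal{F})$ already on the second page, so both spectral sequences abut to $H^*(D,\mathcal{F})$, as required.

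The only potentially delicate point is the compatibility between the differential of the Mayer--Vietoris complex $\mathcal{F}_{D^\bullet}$ (which is an alternating sum of restriction maps along the stratification) and the $d_1$ differential of the spectral sequence; this is automatic once the sign conventions are fixed, but it is worth spelling out since later arguments (for example Lemma \ref{cuproduct}, which lifts the cup product to this spectral sequence) rely on it. Apart from that bookkeeping, the corollary is a direct formal consequence of Proposition \ref{resolution}; no geometry of the dlt pair beyond what is already encoded there is needed at this stage.
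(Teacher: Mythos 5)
Your proposal is correct and follows essentially the same route as the paper: both take the Mayer--Vietoris resolution from Proposition \ref{resolution}, resolve each term (\v Cech or an equivalent acyclic resolution) to form a double complex, and read off the first filtration spectral sequence with $E_1^{p,q}=H^q(\mathcal F_{D^{[p]}})$ abutting to $H^*(\mathcal F)$. You simply spell out the standard hypercohomology bookkeeping that the paper leaves implicit.
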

\begin{proof}
Resolving every term of the complex with its \v Cech complex we get a double complex  which yields a spectral sequence with $E_1$ term equal to \eqref{ss eq}.
\end{proof}

\begin{rema}We notice that Corollary \ref{spectral sequence} for $\mc F=\bC$ implies \eqref{ss eq} for $\mc F=\mc O_D$. Indeed, since the connected components of $ D^{[q]}$ are rational, by  \cite{Kovacs} they are Du Bois and hence there is a surjection
$ H^p ({D^{[q]}}, \bQ) \to \Gr^0_FH^p ({D^{[q]}}, \bQ)=H^p ({D^{[q]}}, \mc O_{D^{[q]}}).
$
By \cite[Thm 2.3.5]{deligneII} $\Gr^k_F$ is an exact functor and hence $ \Gr^0_FH^p ({D^{[q]}})$ abuts to $ \Gr^0_FH^{p+q} (D, \bC)=H^{p+q} (D, \mc O_D)$.
\end{rema}

We end with the following lemma.

\begin{lem} \label{cuproduct}
The spectral sequence of Corollary \ref{spectral sequence}, for $\mathcal{O}_{X_0}$, is endowed with an algebra structure that is compatible with the cup product on $H^*(\mathcal O_{X_0})$.
\end{lem}
\begin{proof}
By Proposition \ref{resolution}, it is enough to produce a morphism of complexes
\be \label{cup product}
\varphi: \mathcal O_{X_0^ \bullet} \otimes \mathcal O_{X_0^\bullet} \to \mathcal O_{X_0^ \bullet}
\ee
which induces the regular cup product on $\mc O_{X_0}$. For $\alpha=\{ \alpha_J\}$  a section of  $\mathcal O_{X_0^{[s]}}$ and $\beta =\{ \beta_K\}$  a section of  $\mathcal O_{X_0^{[t]}}$ we set $\varphi(\alpha \otimes \beta)_{j_0 j_1 \cdots j_{s+t+1}}=\alpha_{j_0 j_1 \cdots i_s}|_{{X_0}_{j_0 j_1 \cdots j_{s+t+1}}} \cdot \beta_{j_s j_{s+1} \cdots i_{s+t+1}}|_{{X_0}_{j_0 j_1 \cdots j_{s+t+1}}}
$.
The verification that $\varphi$ is a morphism of complexes is formally the same as that for the cup product in Cech cohomology.
We can lift $\varphi$ to a morphism of the Cech resolutions of each of the two complexes, getting a product structure on the corresponding spectral sequence and hence a product $H^q(\mc O_{X_0^{[p]}})\otimes H^{q'}(\mc O_{X_0^{[p']}}) \to H^{q+q'}(\mc O_{X_0^{[p+p']}})$ which is compatible with the cup product on $H^*(\mc O_{X_0})$.
\end{proof}

 \bibliography{hkref2}
\end{document}